\documentclass[11pt]{amsart}
\usepackage{amssymb}
\usepackage{amscd} 
\usepackage{amsfonts, amsmath, amsthm}
\usepackage{tikz-cd}
\usepackage{url}
\usepackage[a4paper, margin=1.0in]{geometry}
\usepackage{mathtools, mathrsfs, color} 
\usepackage{graphicx}
\usepackage{tikz,tikz-cd}
\usepackage[all,cmtip]{xy}
\numberwithin{equation}{section}
\usepackage{blkarray}
\usepackage{comment}
\usepackage[nobysame,alphabetic,initials,msc-links,lite]{amsrefs}
\usepackage{hyperref}
\usepackage{xcolor}
\usepackage[normalem]{ulem}
\allowdisplaybreaks

\definecolor{contentsred}{RGB}{145,0,0}
\makeatletter
\newif\ifNT@tocsubsections
\newlength{\NT@tocindent}
\newcommand{\NT@toclink}[1]{%
  \textcolor{contentsred}{\hyperlink{\NT@tocanchor}{#1}}}
\newcommand{\NT@tocsection}[2]{%
  \par\addvspace{6pt}%
  \NT@tocsubsectionsfalse
  \begingroup
    \normalsize\bfseries
    \leftskip\NT@tocindent
    \rightskip2em \parfillskip-2em \parindent\z@
    \noindent\hspace*{-\NT@tocindent}%
    \makebox[\NT@tocindent][r]{%
      \@ifnotempty{#1}{#1.}\hspace{.65em}}%
    \NT@toclink{#2}\nobreak
    \leaders\hbox{\normalfont\kern.15em.\kern.15em}\hfill
    \nobreak\makebox[2em][r]{%
      \NT@toclink{\NT@tocpage}}\par
  \endgroup
}
\newcommand{\NT@tocsubsection}[1]{%
  \@ifnotempty{#1}{%
    \ifNT@tocsubsections
      \unskip\nobreak\hspace{.4em}\textbullet\hspace{.4em}%
    \else
      \small\leftskip\NT@tocindent \rightskip2em plus1em
      \parfillskip\z@ plus1fil \parindent\z@
      \noindent\NT@tocsubsectionstrue
    \fi
    \NT@toclink{#1}%
  }%
}
\renewcommand{\tableofcontents}{%
  \par\addvspace{1.25\baselineskip}%
  \noindent\begin{minipage}{\textwidth}
    \hypersetup{pdfborder={0 0 0}}%
    \parindent\z@ \parskip\z@
    \hrule height .5pt
    \vspace{.4\baselineskip}
    {\large\bfseries\contentsname\par}
    \NT@tocsubsectionsfalse
    \def\contentsline##1##2##3##4{%
      \def\NT@tocpage{##3}\def\NT@tocanchor{##4}##2}%
    \def\tocsection##1##2##3{\NT@tocsection{##2}{##3}}%
    \let\tocappendix\tocsection
    \def\tocsubsection##1##2##3{\NT@tocsubsection{##3}}%
    \def\tocsubsubsection##1##2##3{}%
    \@input{\jobname.toc}%
    \par\vspace{.65\baselineskip}
    \hrule height .5pt
    \if@filesw
      \@ifundefined{tf@toc}{\newwrite\tf@toc}{}%
      \immediate\openout\tf@toc\jobname.toc\relax
    \fi
  \end{minipage}\par
}
\makeatother

\def\today{\number\day\space\ifcase\month\or   January\or February\or
   March\or April\or May\or June\or   July\or August\or September\or
   October\or November\or December\fi\   \number\year}

\newtheorem{thm}{Theorem}[section]
\newtheorem{lem}[thm]{Lemma}
\newtheorem{prp}[thm]{Proposition}
\newtheorem{dfn}[thm]{Definition}
\newtheorem{cor}[thm]{Corollary}

\newtheorem{rmk}[thm]{Remark}

\newtheorem{exa}[thm]{Example}

\newcommand{\beq}{\begin{equation}}
\newcommand{\eeq}{\end{equation}}
\newcommand{\beqr}{\begin{eqnarray*}}
\newcommand{\eeqr}{\end{eqnarray*}}
\newcommand{\bal}{\begin{align*}}
\newcommand{\eal}{\end{align*}}
\newcommand{\bei}{\begin{itemize}}
\newcommand{\eei}{\end{itemize}}

\newcommand{\af}{\alpha}
\newcommand{\bt}{\beta}
\newcommand{\gm}{\gamma}
\newcommand{\dt}{\delta}
\newcommand{\ep}{\varepsilon}

\newcommand{\ch}{\chi}

\newcommand{\te}{\theta}
\newcommand{\ld}{\lambda}
\newcommand{\sm}{\sigma}

\newcommand{\ps}{\psi}
\newcommand{\rh}{\rho}
\newcommand{\om}{\omega}
\newcommand{\ta}{\tau}

\newcommand{\Gm}{\Gamma}

\newcommand{\Th}{\Theta}
\newcommand{\Ld}{\Lambda}

\newcommand{\Ph}{\Phi}
\newcommand{\Ps}{\Psi}

\newcommand{\Q}{{\mathbb{Q}}}
\newcommand{\Z}{{\mathbb{Z}}}
\newcommand{\R}{{\mathbb{R}}}
\newcommand{\C}{{\mathbb{C}}}

\newcommand{\T}{{\mathbb{T}}}

\newcommand{\pf}{{\operatorname{pf}}}

\newcommand{\Id}{{\operatorname{Id}}}

\newcommand{\diag}{{\mathrm{diag}}}

\newcommand{\rank}{{\mathrm{rank}}}

\newcommand{\Aut}{{\mathrm{Aut}}}

\newcommand{\GL}{{\mathrm{GL}}}

\newcommand{\I}{\infty}

\newcommand{\A}{\mathscr{A}}

\title{Equivariant Morita equivalence for finite cyclic group actions on noncommutative tori}

\author[]{Sayan Chakraborty} 
\address{ Institute for Advancing Intelligence, TCG CREST, Sector V, Salt Lake, Kolkata 700091, India.}
\address{\href{mailto:sayan.chakraborty@tcgcrest.org}{sayan.chakraborty@tcgcrest.org}}
\author[]{Pratik Kumar Kundu} 
\address{ Institute for Advancing Intelligence, TCG CREST, Sector V, Salt Lake, Kolkata 700091, India.}
\address{\href{mailto:pratik.kundu.79@tcgcrest.org}{pratik.kundu.79@tcgcrest.org}}

\keywords{Morita equivalence, noncommutative torus, crossed product}
\subjclass[2020]{46L35, 46L55}

\begin{document}

\begin{abstract}

Li proved that the noncommutative tori $A_\theta$ and
$A_{g\theta}$ are strongly Morita equivalent whenever
$g\in\rm{SO}(n,n|\Z)$ and $g\theta$ is defined. We study
equivariant extensions of these equivalences for canonical finite
cyclic group actions. Let
$W\in\GL_n(\Z)$ satisfy $W^t\theta W=\theta$, $W^k=I_n$
and $1\notin\operatorname{spec}(W)$, and write
$\rho(R)=\operatorname{diag}(R,(R^t)^{-1})$. If
$g\rho(W^t)g^{-1}=\rho(P)$ for some $P\in\GL_n(\Z)$,
we equip Li's $A_{g\theta}$--$A_\theta$ Heisenberg
equivalence bimodule with a compatible $\Z_k$-action.
Consequently, the crossed products
$A_{g\theta}\rtimes_{\alpha_{P^t}}\Z_k$ and
$A_\theta\rtimes_{\alpha_W}\Z_k$
are strongly Morita equivalent.
For the canonical flip action by $\Z_2$, we prove that Morita equivalence of crossed
products implies Morita equivalence of the underlying tori in every
dimension, and prove the converse in dimensions two through four.
For each fixed dimension $2\leq n\leq4$, the complete invariants are the central rank
and canonical $K_0$-trace range of the underlying torus, with the
latter taken up to positive scaling. For the standard cyclic group
actions of orders $2$, $3$, $4$ and $6$ on rotation algebras,
we prove that the crossed products are Morita equivalent exactly
when the action orders agree and the parameters lie in the same
fractional-linear $\rm{GL}_2(\Z)$-orbit, generalizing the result
of B\"onicke et al.\ from irrational
to arbitrary real parameters. We also construct infinite families of distinct
parameters giving Morita-equivalent crossed products for each
fixed $W$ satisfying the stated hypotheses.
\end{abstract}
\maketitle \pagestyle{myheadings} \markboth{Sayan~Chakraborty and Pratik~Kumar~Kundu}{Equivariant Morita equivalence for finite cyclic group actions on noncommutative tori}
\vspace{1.5cm}
\tableofcontents
\clearpage

\section{Introduction}
Let $\mathcal T_n(\R)$ denote the set of real skew-symmetric $n\times n$
matrices. For $\theta\in\mathcal T_n(\R)$, we use the Weyl
presentation of the noncommutative torus $A_\theta$: it is the
universal $\mathrm{C}^*$-algebra generated by unitaries
$\{U_x^\theta:x\in\Z^n\}$ satisfying
\[
 U_x^\theta U_y^\theta
 =\sigma_\theta(x,y)U_{x+y}^\theta,
 \qquad
 \sigma_\theta(x,y)=\exp(\pi i x^t\theta y),
 \qquad
 (U_x^\theta)^*=U_{-x}^\theta.
\]
A matrix $W\in\GL_n(\Z)$ satisfying $W^t\theta W=\theta$
induces the canonical automorphism
\[
 \alpha_W(U_x^\theta)=U_{Wx}^\theta.
\]
If $W^k=I_n$, this automorphism defines an action of $\Z_k$.

Rieffel classified irrational rotation algebras up to strong
Morita equivalence \cite{Rieffel1981} and constructed Heisenberg
modules over higher-dimensional noncommutative tori
\cite{Rieffel1988}. Rieffel and Schwarz subsequently introduced
the partial action of $\rm{SO}(n,n|\Z)$ on $\mathcal T_n(\R)$
and established its relation to Morita equivalence \cite{RSch}.
Here
\[
 \rm{SO}(n,n|\Z)
 =\{g\in\rm{SL}_{2n}(\Z):g^tJ_{n,n}g=J_{n,n}\},
 \qquad
 J_{n,n}=\begin{pmatrix}0&I_n\\I_n&0\end{pmatrix},
\]
and, for $g=\begin{psmallmatrix}A&B\\C&D\end{psmallmatrix}$,
the action is given by
\[
 g\theta=(A\theta+B)(C\theta+D)^{-1}
\]
whenever $C\theta+D$ is invertible. Li proved that
$A_{g\theta}\sim_M A_\theta$ for every such pair $(g,\theta)$,
by constructing an explicit Heisenberg equivalence bimodule
\cite{Li2004}*{Theorem~1.1}. Throughout, $\sim_M$ denotes
strong Morita equivalence of $\mathrm{C}^*$-algebras. We write
\[
 \rho(R)=\begin{pmatrix}R&0\\0&(R^t)^{-1}\end{pmatrix},
 \qquad
 \nu(N)=\begin{pmatrix}I_n&N\\0&I_n\end{pmatrix},
\]
for $R\in\GL_n(\Z)$ and integral skew-symmetric $N$. Both
$\rho(R)$ and $\nu(N)$ belong to $\rm{SO}(n,n|\Z)$ and
act by $\rho(R)\theta=R\theta R^t$ and
$\nu(N)\theta=\theta+N$. Notice
that the noncommutative tori corresponding to the matrices $\rho(R)\theta$ and $\nu(N)$ are
both isomorphic to $A_{\theta}$.

For finite cyclic group actions generated by $\alpha_W$, Chakraborty and Luef used metaplectic
operators to equip Heisenberg right modules with compatible group
actions \cite{CL}. Chakraborty extended this construction to
fundamental projective modules induced by certain elements of
$\rm{SO}(n,n|\Z)$ and identified a conjugacy condition
for lifting the associated Morita equivalences
\cite{ChTrace}*{Theorem~4.2 and Remark~4.3}. We treat arbitrary
$g\in\rm{SO}(n,n|\Z)$ satisfying that condition, using Li's
full construction.
We verify covariance of both module actions and both inner
products, obtaining the following equivariant imprimitivity theorem.

\begin{thm}\label{thm:main}
Let $\theta\in\mathcal T_n(\R)$ and
$g\in\rm{SO}(n,n|\Z)$, with $g\theta$ defined. Let
$W\in\GL_n(\Z)$ and an integer $k\geq2$ satisfy
\begin{equation}\label{eq:hypW}
 W^t\theta W=\theta,\qquad W^k=I_n,\qquad
 1\notin\operatorname{spec}(W).
\end{equation}
Suppose that $P\in\GL_n(\Z)$ satisfies
\begin{equation}\label{eq:conjugacy}
 g\rho(W^t)g^{-1}=\rho(P).
\end{equation}
Then $P^k=I_n$, $P(g\theta)P^t=g\theta$, and there is a
$\Z_k$-equivariant $A_{g\theta}$--$A_\theta$ imprimitivity
bimodule for the actions $\alpha_{P^t}$ and $\alpha_W$.
Consequently,
\[
 A_{g\theta}\rtimes_{\alpha_{P^t}}\Z_k
 \sim_M A_\theta\rtimes_{\alpha_W}\Z_k.
\]
\end{thm}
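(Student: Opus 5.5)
The first two assertions are algebraic consequences of the conjugacy relation \eqref{eq:conjugacy}. Since $\rho$ is an injective group homomorphism $\GL_n(\Z)\to\rm{SO}(n,n|\Z)$, raising \eqref{eq:conjugacy} to the $k$-th power gives $\rho(P^k)=g\rho((W^t)^k)g^{-1}=I_{2n}$, hence $P^k=I_n$. For the invariance of $g\theta$, we use that the partial action is compatible with products wherever defined. First, $\rho(W^t)\theta=W^t\theta W=\theta$, so
\[
\rho(P)(g\theta)=g\rho(W^t)g^{-1}(g\theta)=g\rho(W^t)\theta=g\theta .
\]
Here one checks that each intermediate expression is defined: $g^{-1}(g\theta)=\theta$ and $\rho(W^t)\theta$ is always defined. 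Since $\rho(P)(g\theta)=P(g\theta)P^t$, we obtain $P(g\theta)P^t=g\theta$. Equivalently $(P^t)^t(g\theta)P^t=g\theta$, so $\alpha_{P^t}$ is a well-defined automorphism of $A_{g\theta}$ of order dividing $k$.

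For the bimodule, we take Li's Heisenberg bimodule $E=E_g(\theta)$ from \cite{Li2004}. It is a completion of a Schwartz space $\mathcal S(M\times F)$, with $M=\R^p\times\Z^q$ and $F$ finite, built from a decomposition of $g$. Its left $A_{g\theta}$-action, right $A_\theta$-action and the two inner products are given by explicit formulas involving an embedding of $\Z^{2n}$ (respectively the lattice for $g\theta$) into $G\times\widehat G$, with $G=M\times F$. We define a unitary operator $T$ on $\mathcal S(M\times F)$ lifting $W$. Following \cite{CL} and \cite{ChTrace}, $T$ is a metaplectic-type operator: a linear change of variables composed with a Gaussian/character multiplication and, where needed, a partial Fourier transform. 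It should implement the symplectic automorphism of the phase space $G\times\widehat G$ that corresponds to $W$ on the $A_\theta$ side. Relation \eqref{eq:conjugacy} then guarantees that the same automorphism corresponds to $P^t$ on the $A_{g\theta}$ side. We then verify the four covariance identities
\[
T(a\cdot\xi)=\alpha_{P^t}(a)\cdot T\xi,\quad T(\xi\cdot b)=T\xi\cdot\alpha_W(b),\quad
{}_{A_{g\theta}}\langle T\xi,T\eta\rangle=\alpha_{P^t}({}_{A_{g\theta}}\langle\xi,\eta\rangle),
\]
and $\langle T\xi,T\eta\rangle_{A_\theta}=\alpha_W(\langle\xi,\eta\rangle_{A_\theta})$. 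In each case we compute on generators $U_x$ and use the intertwining relation $\pi(Sz)=T\pi(z)T^{-1}$ for time-frequency shifts, up to a phase. Finally we arrange $T^k=1$: $T^k$ commutes with both actions, hence is a scalar by fullness and irreducibility-type arguments (it lies in the centre of the linking algebra's relevant part), and $T$ can be rescaled by a $k$-th root. With that, Combes' theorem (equivariant Morita equivalence implies Morita equivalence of crossed products) gives $A_{g\theta}\rtimes_{\alpha_{P^t}}\Z_k\sim_M A_\theta\rtimes_{\alpha_W}\Z_k$.

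The main obstacle is constructing $T$ for arbitrary $g$ and controlling the phases. The cocycles $\sigma_\theta(Wx,Wy)=\sigma_\theta(x,y)$ match, but Li's embedding uses a chosen splitting, and the lifted symplectic map only preserves the Heisenberg cocycle up to a coboundary. This is where $1\notin\operatorname{spec}(W)$ enters. Because $W-I$ is invertible over $\Q$, we can solve for a quadratic-character correction $c(x)$ with $c(Wx)/c(x)$ equal to the required phase ratio, after possibly passing to the finite part $F$ and averaging. I would first reduce, via Li's factorization of $g$ into $\rho(R)$, $\nu(N)$ and partial-transpose generators, to checking the construction generator by generator. If that fails, I would instead work directly with Li's formula, handling the $\R^p$ part by a metaplectic operator and the $\Z^q\times F$ part by a permutation twisted by a character.
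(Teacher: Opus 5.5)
Your derivation of $P^k=I_n$ and $P(g\theta)P^t=g\theta$ is fine. The normalization of the $k$-th power, however, fails as stated. Write $\mathcal U$ for your unitary (Li's embedding map is already called $T$), and put $D=T(\Z^n)$, $D^\perp=S(\Z^n)$.

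Commuting with both module actions does not make $\mathcal U^k$ scalar. For $x\in H_\theta$ the generator $U_x^\theta$ is central. So right multiplication by it, which is a multiple of $\pi(Tx)$ and non-scalar for $x\neq0$, commutes with both actions and preserves both inner products. Equivalently, $\rho_G(Tx,Ty)=e^{2\pi i x^t\theta y}$ gives $D\cap D^\perp=T(H_\theta)$. This is nonzero exactly when $A_\theta$ is not simple, a case the theorem covers (e.g.\ rational or zero $\theta$) and the later applications use. Central is not scalar: irreducibility of $\pi$ only gives $\mathcal U^k=\lambda\pi(v)$ with $v\in D\cap D^\perp$.

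The missing step is a second use of $1\notin\operatorname{spec}(W)$. Since $\mathcal U$ commutes with $\mathcal U^k$, covariance gives $\pi(v)=\eta(v)\pi(Qv)$. Hence $Qv=v$, because $\pi(z)$ is scalar only for $z=0$. Writing $v=Tx$ and using $QT=TW$ yields $Wx=x$, hence $v=0$. The paper runs this argument before fixing the left phases, with $v=S(m)\in D^\perp$, $P_1^tm=m$ and $P_1^t=\mathscr A^{-1}W\mathscr A$.

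Second, you never construct the automorphism that $\mathcal U$ is supposed to implement; ``the conjugacy relation then guarantees'' is exactly the content of the proof. Since $G\cong\R^{2p}\times F_{\rm fin}^2\times\Z^q\times\T^q$ is not a vector space, $Tx\mapsto TWx$ need not extend to a topological automorphism of $G$. Connectedness already forbids sending the real factor into $\Z^q$, and the finite factor must be respected.

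The paper first extracts structure from the hypothesis. The lower block row of $g''\rho(W^t)=\rho(P_1)g''$, together with $\mathscr A^{-1}=C'\theta+D'$, gives $W\mathscr A=\mathscr AP_1^t$ and $W\Phi W^t=\Phi$. Hence $W=\begin{psmallmatrix}W_1&W_2\\0&W_4\end{psmallmatrix}$ with $W_1^tZW_1=Z$, so $W_1$ preserves $\Lambda_Z$ and acts on $F_{\rm fin}^2$. The paper then writes down $Q$ explicitly, including the mixing terms $c$, $\ell$, $\Xi$ forced by $W_2$. It verifies $\rho_G$-invariance, $QT=TW$, $Q^k=\Id_G$ and $QS=SP_1^t$; the last is what lets the same operator implement $\alpha_{P_1^t}$ on the left. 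Finally it implements $Q$ by Weil's theorem rather than by explicit metaplectic formulas.

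Your generator-by-generator fallback does not sidestep this. Neither $h\rho(W^t)h^{-1}\in\rho(\GL_n(\Z))$ nor definedness of $h\theta$ passes to the partial products $h$ of a factorization of $g$.

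Finally, the phases are not a single coboundary problem. On the right, the defect is automatically a character and is absorbed by some $\pi(z_0)$ with no spectral hypothesis. On the left, $P_1^t$ preserves $\sigma_\Theta$, $\Theta=\theta'+K$, but not Li's $\sigma_{\theta'}$, so the sign $\varepsilon_K$ must be absorbed first. Only then is the defect a character $\delta$, which is removed by solving $\omega(P_1^tx)/\omega(x)=\delta(x)^{-1}$ using invertibility of $P_1-I_n$.
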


The conjugacy condition ensures that the transformed action is
again induced by an integral lattice automorphism. To construct
the bimodule action, we construct a unitary implementer $\mathcal U$
of a symplectic automorphism of the locally compact abelian group
$M\times\widehat M$ in Li's construction. The hypothesis
$1\notin\operatorname{spec}(W)$ allows us to correct the implementer
so that it satisfies $\mathcal U^k=I$ and the required covariance identities.
The crossed-product conclusion then follows
from the equivariant imprimitivity theorem of Combes and
Curto--Muhly--Williams \cites{Combes1984,CMW1984}.

For the canonical flip, put
\[
 \alpha_\theta=\alpha_{-I_n},\qquad
 B_\theta=A_\theta\rtimes_{\alpha_\theta}\Z_2.
\]
Since $\rho(-I_n)=-I_{2n}$ is central, Theorem~\ref{thm:main}
gives $B_{g\theta}\sim_M B_\theta$ whenever $g\theta$ is
defined. For simple tori, Echterhoff, L\"uck, Phillips and
Walters proved that the flip crossed products are AF algebras
\cite{ELPW}*{Theorem~6.6}, and Chakraborty proved that
$B_\theta\sim_M B_{\theta'}$ if and only if
$A_\theta\sim_M A_{\theta'}$
\cite{ChTrace}*{Corollary~5.11}. We extend the implication from
crossed products to underlying tori to all parameters. We also
prove the converse under the rank conditions below.

The invariants are those occurring in Elliott and Li's
classification of noncommutative tori by their centres and
ordered $K_0$-groups \cite{EL}*{Theorem~1.1}. Set
\begin{equation}\label{eq:invariants}
 \begin{split}
 H_\theta&=\{x\in\Z^n:\theta x\in\Z^n\},\qquad
 r_\theta=\rank H_\theta,\qquad m_\theta=n-r_\theta,\\
 \mathcal R_\theta&=(\tau_\theta)_*K_0(A_\theta)
 =\sum_{\substack{I\subseteq\{1,\ldots,n\}\\|I|\text{ even}}}\pf(\theta_I)\Z.
 \end{split}
\end{equation}
Here $\tau_\theta(U_x^\theta)=\delta_{x,0}$ is the canonical
trace, $\theta_I$ is the principal submatrix indexed by $I$,
and $\pf(\theta_\varnothing)=1$. The trace-range formula is
due to Elliott \cite{Elliott1984}. The centre of $A_\theta$ is
isomorphic to $C(\T^{r_\theta})$, and $A_\theta$ is simple
exactly when $H_\theta=0$; this is the meaning of
nondegeneracy used here. Elliott--Li's reduction gives
$A_\theta\sim_M C(\T^{r_\theta})\otimes A_\eta$ for a
nondegenerate $m_\theta$-dimensional parameter $\eta$
\cite{EL}*{Proposition~3.3}, with $A_\eta=\C$ when
$m_\theta=0$.

\begin{thm}\label{thm:classification}
Fix $n\geq2$ and let $\theta,\theta'\in\mathcal T_n(\R)$.
\begin{enumerate}
\renewcommand{\theenumi}{\roman{enumi}}
\renewcommand{\labelenumi}{\textup{(\theenumi)}}
\item If $B_\theta\sim_M B_{\theta'}$, then
$A_\theta\sim_M A_{\theta'}$.
\item Put
\[
 \mathcal C_n=\{\eta\in\mathcal T_n(\R):
 r_\eta\leq1\text{ or }m_\eta\in\{0,2\}\}.
\]
If $\theta,\theta'\in\mathcal C_n$, the following conditions
are equivalent:
\begin{align*}
 &B_\theta\sim_M B_{\theta'};\\
 &A_\theta\sim_M A_{\theta'};\\
 &r_\theta=r_{\theta'}\quad\text{and}\quad
 \mathcal R_{\theta'}=\lambda\mathcal R_\theta
 \text{ for some }\lambda>0.
\end{align*}
\end{enumerate}
In particular, \textup{(ii)} applies to every pair of parameters
in dimensions $2$, $3$ and $4$.
\end{thm}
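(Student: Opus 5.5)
The plan is to prove (i) by reading off from $B_\theta$ the Elliott--Li reduction data of $A_\theta$, namely the central rank $r_\theta$ and the Morita class of the nondegenerate part $A_\eta$. For (ii), the route is to use (i), Theorem~\ref{thm:main} for the flip, and a normal-form reduction inside the $\rm{SO}(n,n|\Z)$-orbit.

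For (i), I would first describe $B_\theta$ as a $C_0$-type algebra over a quotient of the centre. By \cite{EL}*{Proposition~3.3}, $A_\theta\sim_M C(\T^{r_\theta})\otimes A_\eta$ with $\eta$ nondegenerate of size $m_\theta$. More intrinsically, $A_\theta$ is a continuous field over $\widehat{H_\theta}\cong\T^{r_\theta}$. I expect every fibre over a point $z$ to be Morita equivalent to $A_\eta$, since the fibres are twisted group algebras of $\Z^n$ with the same nondegenerate reduction.

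The flip $\alpha_{-I_n}$ acts on the centre by $z\mapsto\bar z$. Over a free orbit $\{z,\bar z\}$, the corresponding subquotient of $B_\theta$ is $(A_z\oplus A_{\bar z})\rtimes\Z_2$ with $\Z_2$ swapping the summands, and this is isomorphic to $M_2(A_z)$. Hence $\operatorname{Prim}(B_\theta)$ contains a dense open subset homeomorphic to (free part of $\T^{r_\theta}/\pm)\times\operatorname{Prim}(A_\eta)$. Since $A_\eta$ is simple, this means the dense open set of primitive ideals that are maximal in a Hausdorff sense is a manifold of dimension $r_\theta$, and the simple quotients there are Morita equivalent to $A_\eta$.

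A strong Morita equivalence $B_\theta\sim_M B_{\theta'}$ induces a homeomorphism of primitive ideal spaces, together with Morita equivalences of corresponding quotients. Invariance of dimension then gives $r_\theta=r_{\theta'}$, and the quotients give $A_\eta\sim_M A_{\eta'}$. Therefore
\[
A_\theta\sim_M C(\T^{r})\otimes A_\eta\sim_M C(\T^{r})\otimes A_{\eta'}\sim_M A_{\theta'}.
\]
The delicate point is making the identification of open subsets of $\operatorname{Prim}$ intrinsic rather than dependent on the coordinates. I would characterise the generic part by the condition that the corresponding quotient is simple with trivial centre and the point has a Euclidean neighbourhood. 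This characterisation has to exclude the orbifold points $z=\bar z$, where the quotient is $A_z\rtimes\Z_2$ rather than a matrix algebra over $A_z$.

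For (ii), the implication $B\Rightarrow A$ is (i). The equivalence between $A_\theta\sim_M A_{\theta'}$ and the invariant condition is Elliott--Li's theorem \cite{EL}*{Theorem~1.1}, combined with Elliott's trace-range formula \cite{Elliott1984}; on $\mathcal C_n$ the ordered $K_0$-data reduce to $r$ and $\mathcal R$ up to scaling. The substantive step is $A_\theta\sim_M A_{\theta'}\Rightarrow B_\theta\sim_M B_{\theta'}$.

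I would use Theorem~\ref{thm:main} with $W=-I_n$. Here $\rho(-I_n)=-I_{2n}$ is central, so the conjugacy condition holds for every $g$ and $B_{g\theta}\sim_M B_\theta$ whenever $g\theta$ is defined. The isomorphisms $A_\theta\cong A_{R\theta R^t}$ and $A_\theta\cong A_{\theta+N}$ commute with the flip, so they also give isomorphisms of the crossed products.

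It then suffices to show that, for $\theta\in\mathcal C_n$, the invariants determine the $\rm{SO}(n,n|\Z)$-orbit up to these moves. I would first use $\rho$ and $\nu$ moves to bring $\theta$ to a block form $0_{r}\oplus\eta$ modulo integers, with $\eta$ nondegenerate, following Elliott--Li's reduction. Then I would treat the three cases separately:
\begin{itemize}
\item[(a)] If $m=0$, the matrix $\theta$ is integral modulo moves, so $B_\theta\sim_M B_0$.
\item[(b)] If $m=2$, the nondegenerate part is a rotation algebra. Equal trace ranges $\Z+\lambda'\Z=\lambda(\Z+\lambda''\Z)$ force the parameters into the same $\GL_2(\Z)$-orbit, which is realised by elements of $\rm{SO}(2,2|\Z)$ embedded in $\rm{SO}(n,n|\Z)$.
\item[(c)] If $r\le1$, I would need that nondegenerate, or nearly nondegenerate, parameters with equal scaled trace ranges lie in the same orbit.
\end{itemize}
Case (c) is the main obstacle. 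My first attempt would be to realise each Morita equivalence produced in Elliott--Li's proof as a composite of Li's bimodules for explicit $g\in\rm{SO}(n,n|\Z)$. Theorem~\ref{thm:main} then lifts each factor of that composite to the flip crossed products.
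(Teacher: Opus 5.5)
\paragraph{Part (i).} For $r_\theta\geq1$ your fibre argument is essentially the paper's. The paper works with the centre rather than with $\operatorname{Prim}$: $Z(D_{r,\eta})\cong C(X_r)$ with $X_r=\T^r/(z\sim z^{-1})$. The covering dimension of $X_r$ gives $r=r'$, and the quotients at free orbits are $M_2(A_\eta)$.

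You do not need an intrinsic description of the generic part. It suffices to choose $x$ such that both $x$ and its image under the induced homeomorphism avoid the finitely many fixed orbits. Your proposed description would in fact fail for $r=2$: $X_2$ is a $2$-sphere, and for $m\geq2$ the fixed-point quotients $B_\eta$ are simple with trivial centre.

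The real gap in (i) is the case $r_\theta=r_{\theta'}=0$. There are then no free orbits, $B_\theta$ is simple, and the only quotient is $B_\eta$ itself, so your argument gives nothing. Recovering the Morita class of $A_\eta$ from that of $B_\eta$ needs $K$-theoretic information about the simple AF algebras $B_\eta$. The paper imports this from \cite{ChTrace}*{Corollary~5.11}.

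\paragraph{Part (ii).} The more serious gap is your case (c) of the implication $A_\theta\sim_M A_{\theta'}\Rightarrow B_\theta\sim_M B_{\theta'}$. Your route needs Morita-equivalent nondegenerate parameters (or parameters with $r=1$) to lie in a single $\rm{SO}(n,n|\Z)$-orbit. Your proposed fix, factoring Elliott--Li's equivalences into Li bimodules, does not work. \cite{EL}*{Theorem~1.1} is a classification by invariants (centres and ordered $K_0$), and it does not produce its equivalences as composites of Li bimodules.

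In fact, if that orbit statement were available, the block embedding you use in case (b) would work for every $m$. That would prove (ii) for all $\theta$ and make the $\mathcal C_n$ hypothesis superfluous; the restriction to $\mathcal C_n$ is there precisely because this is not known.

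The paper avoids orbits when $r\leq1$.
\begin{itemize}
\item For $r=0$ it cites \cite{ChTrace}*{Corollary~5.11}.
\item For $r=1$ it proves Proposition~\ref{prp:one-dimensional-centre}. This uses the interval model $D_{1,\eta}\cong\{f\in C([0,1],M_2(A_\eta)):f(0),f(1)\in j_\eta(B_\eta)\}$ and the $K_0$-analysis of the regular inclusion $j_\eta$. AF classification matches the endpoint algebras, and cancellation in $M_2(A_{\eta'})$ matches the interior. Lin's asymptotic unitary equivalence theorem then glues the two endpoint embeddings, and the proof finishes with a full corner.
\end{itemize}

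Your cases (a) and (b) agree with the paper, as does your use of \cite{EL} for the equivalence between $A_\theta\sim_M A_{\theta'}$ and the invariant condition. One small correction: reaching the form $0_r\oplus\eta$ needs general elements of $\rm{SO}(n,n|\Z)$ from \cite{EL}*{Proposition~3.3}, not only $\rho$- and $\nu$-moves. This is harmless, since Theorem~\ref{thm:main} covers every $g$ for the flip.
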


For rotation algebras, i.e., in the case $n=2$, we consider the standard actions
associated to the nontrivial finite cyclic subgroups of
$\rm{SL}_2(\Z)$, whose orders are $2,3,4,6$. Write $A_t$
for the rotation algebra with parameter $t\in\R$ and
$C_{t,k}$ for its crossed product by the standard action of
order $k$. For irrational parameters, these algebras are AF
by \cite{ELPW}, and their Morita equivalence classes were
determined in \cite{BCHL}*{Theorem~5.3}. We obtain the
classification for all real parameters
(Theorem~\ref{thm:two-dimensional-classification}):
\[
 C_{t,k}\sim_M C_{s,l}
 \quad\Longleftrightarrow\quad
 k=l\ \text{ and }\ s=\frac{at+b}{ct+d}
 \ \text{ for some }\
 \begin{pmatrix}a&b\\c&d\end{pmatrix}\in\rm{GL}_2(\Z),
\]
where $ct+d\neq0$. In particular, for each fixed order, all
rational parameters give Morita-equivalent crossed products.

Finally, Theorem~\ref{thm:infinite-fixed-action} constructs
infinitely many distinct $W$-invariant parameters whose crossed
products by the actions induced by $W$ are Morita equivalent,
for every $(\theta,W)$ satisfying \eqref{eq:hypW} and $n\geq2$.
For $\theta\neq0$, we give an explicit integral skew-symmetric
matrix $C$ with $WCW^t=C$ such that
\[
 \theta_j=\theta(I_n+jC\theta)^{-1}
\]
is defined and gives pairwise distinct parameters for all but
finitely many $j\in\Z$. For $\theta=0$, invariant integral
shifts provide such a family. The examples include the
four-dimensional actions of orders $5,8,10,12$ constructed by
Jeong and Lee \cite{JL}*{Proposition~6.4}.

Section~2 recalls the Heisenberg representation and Li's
bimodules. Sections~\ref{sec:symplectic-construction}
and~\ref{sec:equivariant-bimodule} construct the symplectic
automorphism and prove Theorem~\ref{thm:main}. Section~5
contains the classifications and examples; the detailed
computations are collected in
Appendix~\ref{app:symplectic-computations}.

\let\NTsavedaddcontentsline\addcontentsline
\renewcommand{\addcontentsline}[3]{}
\section*{Acknowledgements}
\let\addcontentsline\NTsavedaddcontentsline
We thank Ingo Ullisch for the initial discussions on this project.
The authors acknowledge the use of ChatGPT, developed by OpenAI,
for assistance with language editing and improving the presentation of
parts of this manuscript. The research of the second named author was
supported by TCG CREST Ph.D Fellowship.

\section{Preliminaries}\label{Prelim}

\subsection{Heisenberg--Weyl operators}

We follow Rieffel's construction of Heisenberg--Weyl operators in \cite{Rieffel1988}*{Section~2}.

Let $M$ be a locally compact abelian group and let $\widehat M$ be its Pontryagin dual. Put $G=M\times\widehat M$. For $z=(m,\chi)$ and $w=(n,\psi)\in G$, set
\[
\beta(z,w)=\langle m,\psi\rangle :=\psi(m),\qquad
\rho_G(z,w)=\beta(z,w)\overline{\beta(w,z)}.
\]
A topological automorphism $Q$ of $G$ is called \emph{symplectic} if
\[
\rho_G(Qz,Qw)=\rho_G(z,w)\qquad\text{for all }z,w\in G.
\]
We recall the following self-duality property of $G$; see \cite{Rieffel1988}*{Section~2, p.~264}.
\begin{lem}\label{lem:rhoG-self-duality}
The map $\rho_G$ is a continuous alternating bicharacter on $G$. Moreover, the map
\[
\Xi:G\longrightarrow \widehat{G},
\qquad
\Xi(z)(w):=\rho_G(z,w),
\]
is a topological group isomorphism. In particular, $\rho_G$ is
nondegenerate.

Equivalently, for every character $\varphi\in\widehat{G}$, there exists a
unique $z_\varphi\in G$ such that
\[
\varphi(w)=\rho_G(z_\varphi,w)
\qquad
\text{for all }w\in G.
\]
\end{lem}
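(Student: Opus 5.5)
The plan is to compute $\Xi$ explicitly in terms of the Pontryagin duality pairing and reduce everything to the canonical identification $\widehat G\cong \widehat M\times\widehat{\widehat M}\cong\widehat M\times M$.

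\textbf{Bicharacter and alternating properties.} For $z=(m,\chi)$, $w=(n,\psi)$ we have
\[
\rho_G(z,w)=\psi(m)\overline{\chi(n)}.
\]
Each factor is a character in $z$ for fixed $w$, and in $w$ for fixed $z$, so $\rho_G$ is a bicharacter. It is continuous because the evaluation map $M\times\widehat M\to\T$ is jointly continuous for locally compact $M$. Finally, $\rho_G(z,z)=\chi(m)\overline{\chi(m)}=1$, so $\rho_G$ is alternating.

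\textbf{Identifying $\Xi$.} For fixed $z=(m,\chi)$, the character $\Xi(z)$ sends $(n,\psi)$ to $\overline{\chi}(n)\,\widehat m(\psi)$, where $\widehat m(\psi)=\psi(m)$. Under the standard isomorphism $\widehat{M\times\widehat M}\cong\widehat M\times\widehat{\widehat M}$, given by $(\varphi_1,\varphi_2)\mapsto[(n,\psi)\mapsto\varphi_1(n)\varphi_2(\psi)]$, which is a topological isomorphism, we get
\[
\Xi(m,\chi)=(\overline\chi,\widehat m).
\]
So $\Xi$ is the composite of the swap $(m,\chi)\mapsto(\chi,m)$, inversion on $\widehat M$, and the Pontryagin duality map $M\to\widehat{\widehat M}$, $m\mapsto\widehat m$. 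Each of these is a topological group isomorphism; the last is the Pontryagin duality theorem. Hence $\Xi$ is a topological isomorphism.

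\textbf{Consequences.} Nondegeneracy follows from injectivity: if $\rho_G(z,w)=1$ for all $w$, then $\Xi(z)=1$, so $z=0$. The equivalent formulation is surjectivity plus injectivity of $\Xi$, taking $z_\varphi=\Xi^{-1}(\varphi)$.

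The only nontrivial input is Pontryagin duality, together with the fact that the product isomorphism for duals is a homeomorphism in the compact-open topologies. Both are standard, so the main care is bookkeeping the identification of $\widehat G$ correctly.
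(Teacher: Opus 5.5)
Your proof is correct: the identification $\Xi(m,\chi)=(\overline{\chi},\widehat{m})$ under $\widehat G\cong\widehat M\times\widehat{\widehat M}$ reduces the claim to Pontryagin duality, which is the standard self-duality argument behind the paper's citation of Rieffel. The paper gives no proof of its own beyond that reference.
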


For $(m,\chi)\in G$, the Heisenberg--Weyl operator $\pi(m,\chi)$ on $L^2(M)$ is defined by
\[
(\pi(m,\chi)f)(r)=\langle r,\chi \rangle f(r-m).
\]
These operators form a projective representation $\pi$ of $G$ satisfying 
\[
\pi(z)\pi(w)=\overline{\bt(z,w)}\pi(z+w),\qquad \pi(z)\pi(w)=\overline{\rh_G(z,w)}\pi(w)\pi(z).
\]
We shall use the irreducibility of this Heisenberg--Weyl representation in the form of the Stone--von Neumann theorem for locally compact abelian groups; see \cites{Segal1963, Weil1964}. We record three facts that will be used later.
\begin{lem}\label{lem:scalar-operator}
If $\pi(z)$ is a scalar operator, then $z=0$.
\end{lem}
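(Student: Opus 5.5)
The plan is to use the commutation relation $\pi(z)\pi(w)=\overline{\rho_G(z,w)}\pi(w)\pi(z)$ together with the nondegeneracy of $\rho_G$ from Lemma~\ref{lem:rhoG-self-duality}. Suppose $\pi(z)=c\,I$ for some $c\in\C$. Since $\pi(z)$ is unitary, $c\neq0$ (indeed $|c|=1$). For every $w\in G$, the scalar $\pi(z)$ commutes with $\pi(w)$. Thus
\[
\pi(w)\pi(z)=\pi(z)\pi(w)=\overline{\rho_G(z,w)}\,\pi(w)\pi(z).
\]
Multiplying on the right by the invertible operator $(\pi(w)\pi(z))^{-1}$ gives $\rho_G(z,w)=1$ for all $w\in G$. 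In other words, the character $\Xi(z)\in\widehat G$ is trivial.

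By Lemma~\ref{lem:rhoG-self-duality}, $\Xi$ is an isomorphism, in particular injective, so $z=0$.

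There is no real obstacle. The only points to check are that $\pi(z)$ and $\pi(w)$ are unitary, hence invertible, which is clear from the defining formula; and that the commutation relation in the preliminaries is used with the correct order of arguments. Complex conjugation is harmless here because the conclusion is that the value equals $1$.

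A direct alternative is also available. Write $z=(m,\chi)$ and apply $\pi(z)$ to functions supported near a point. Being scalar forces $m=0$ whenever $M$ is Hausdorff and $L^2(M)$ separates translates. Then multiplication by $\chi$ being scalar forces $\chi=1$. The duality argument above is cleaner, though, and is the one I would write.
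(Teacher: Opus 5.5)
Your proof is correct and is essentially the paper's own argument: a scalar $\pi(z)$ commutes with every $\pi(w)$, the commutation relation then forces $\rho_G(z,w)=1$ for all $w\in G$, and injectivity of $\Xi$ from Lemma~\ref{lem:rhoG-self-duality} gives $z=0$. The alternative you sketch at the end is not needed and, as written, is less precise than the duality argument.
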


\begin{proof}
Suppose that $\pi(z)=\lambda I$ for some $\lambda\in\mathbb T$. Then $\pi(z)$ commutes with
$\pi(w)$ for every $w\in G$, and hence
\[
\pi(z)\pi(w)=\pi(w)\pi(z).
\]
The commutation relation gives $\rho_G(z,w)=1$ for every
$w\in G$. By Lemma~\ref{lem:rhoG-self-duality}, $z=0$.
\end{proof}
\begin{lem}\label{lem:character-extension}
    Let $H$ be a closed subgroup of a locally compact abelian group $G$.  Every continuous character of $H$ extends to a continuous character of $G$.
\end{lem}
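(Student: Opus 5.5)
This is the classical extension theorem for characters of locally compact abelian groups, and I would prove it by Pontryagin duality. Write $H^\perp=\{\gamma\in\widehat G:\gamma|_H\equiv 1\}$. This is a closed subgroup of $\widehat G$. The plan is to identify the restriction map with the quotient map $\widehat G\to\widehat G/H^\perp$. Then I would show that the quotient $\widehat G/H^\perp$ is canonically isomorphic to $\widehat H$.

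\emph{Step 1.} Consider the restriction homomorphism $r:\widehat G\to\widehat H$, $r(\gamma)=\gamma|_H$. It is continuous for the compact-open topologies, and its kernel is exactly $H^\perp$. So $r$ factors through an injective continuous homomorphism $\bar r:\widehat G/H^\perp\to\widehat H$. The claim is precisely that $r$, equivalently $\bar r$, is surjective.

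\emph{Step 2.} Compute the annihilator of the image. Let $L=\overline{r(\widehat G)}\subseteq\widehat H$. By Pontryagin duality for $H$, the elements of $H\cong\widehat{\widehat H}$ that annihilate $L$ are the $h\in H$ with $\gamma(h)=1$ for all $\gamma\in\widehat G$. Characters of $G$ separate points (Gelfand--Raikov, or again duality for $G$), so this forces $h=0$. In an LCA group, a closed subgroup with trivial annihilator is everything, since $L=(L^\perp)^\perp$ by duality. Hence $r(\widehat G)$ is dense in $\widehat H$.

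\emph{Step 3 (the main obstacle).} Upgrade density to surjectivity by showing that $r(\widehat G)$ is closed. I would use the standard duality theorem $(G/H)^\wedge\cong H^\perp$ together with $(H^\perp)^\perp=H$. Applying duality to the closed subgroup $H^\perp\subseteq\widehat G$ gives
\[
\widehat G/H^\perp\cong \widehat{(H^\perp)^\perp}=\widehat H
\]
as topological groups. One checks that the composite isomorphism is exactly $\bar r$, by evaluating both sides on $h\in H$. So $\bar r$ is onto.

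If one prefers to avoid quoting this form of duality, there is an alternative route. Reduce via the structure theorem to $G$ compactly generated, so $G\cong\R^a\times\Z^b\times K$ up to an open subgroup. Extend first from $H$ to an open subgroup containing it. Then extend from an open subgroup to $G$ using divisibility of $\T$, which is Zorn's lemma plus Baer's criterion: extensions from open subgroups are automatically continuous. Either way, the delicate point is continuity and closedness, not algebraic extendability. That is why I would rely on the duality isomorphism $\widehat G/H^\perp\cong\widehat H$, which is a standard result (e.g.\ Rudin, \emph{Fourier Analysis on Groups}, Theorem~2.1.4, or Hewitt--Ross (24.11)), and cite it.
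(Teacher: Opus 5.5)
Your proposal is correct and is essentially the paper's argument: the paper simply cites the character-extension theorem of Hewitt--Ross (Theorem~24.12), which is the corollary of exactly the duality isomorphism $\widehat G/H^\perp\cong\widehat H$ (their (24.11)) that your Step~3 invokes, so Step~2 is superfluous. Your alternative sketch does not justify the genuinely nontrivial step of extending from $H$ to an open subgroup containing it (e.g.\ from $\Z$ to $\R$), but since you do not rely on that route, this is not a gap.
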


\begin{proof}
    This is the character-extension theorem in Pontryagin duality; see~\cite{HewittRoss1979}*{Theorem~24.12}. Equivalently, the restriction homomorphism $\widehat G\to\widehat H$ is surjective.
\end{proof}

For $M=\R^p\times\Z^q\times F$, with $F$ a finite abelian group, let
$\mathcal S(M)$ be the space of functions smooth in the real
variables for which
\[
 \sup_{\substack{x\in\R^p,\ u\in\Z^q\\ a\in F}}
 (1+\|x\|+\|u\|)^N
 \bigl|\partial_x^\alpha f(x,u,a)\bigr|<\infty
\]
for every $N\geq0$ and every multi-index $\alpha$.
The space $\mathcal S(M)$ is called the Schwartz--Bruhat space of $M$.
we shall refer to it simply as the Schwartz space.

\begin{lem}\label{lem:unitary implementation}
Let $M=\R^p\times\Z^q\times F$, where $F$ is finite abelian, and put
$G=M\times\widehat M$. If $Q\in\Aut(G)$ is a topological
automorphism satisfying
\[
 \rho_G(Qz,Qw)=\rho_G(z,w)\qquad(z,w\in G),
\]
then there are a unitary operator $V_Q$ on $L^2(M)$ and a
continuous function $\eta_Q:G\to\T$ such that
\[
 V_Q\pi(z)V_Q^*=\eta_Q(z)\pi(Qz)\qquad(z\in G).
\]
Moreover, $V_Q$ preserves the Schwartz space: $V_Q\mathcal S(M)=\mathcal S(M)$.
\end{lem}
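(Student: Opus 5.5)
The plan is to obtain $V_Q$ from the Stone--von Neumann uniqueness theorem, and then to prove Schwartz-space invariance separately by factoring $Q$ into elementary symplectic automorphisms.

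\emph{Existence of $V_Q$ and $\eta_Q$.} Consider $z\mapsto\pi(Qz)$. Since $Q$ preserves $\rho_G$, the commutation relations $\pi(Qz)\pi(Qw)=\overline{\rho_G(z,w)}\pi(Qw)\pi(Qz)$ hold. The cocycle $\overline{\beta(Qz,Qw)}$ need not equal $\overline{\beta(z,w)}$. However, the ratio $c(z,w)=\beta(z,w)\overline{\beta(Qz,Qw)}$ is a bicharacter, and it is symmetric because $Q$ is symplectic. A symmetric continuous bicharacter on $G$ is a coboundary. First, $z\mapsto c(z,z)$ is a character, and on these groups it has a continuous ``square root'' $\xi$ with $\xi(z+w)=\xi(z)\xi(w)c(z,w)$. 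This is built coordinatewise: by real quadratic forms on the $\R$-parts, and on the discrete and compact parts by extending a chosen function via Lemma~\ref{lem:character-extension} applied to the graph subgroup, or by direct choice on generators.

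With such $\xi$, the modified map $z\mapsto\xi(z)\pi(Qz)$ is a projective representation with the same cocycle $\overline\beta$ as $\pi$. It is also irreducible, because its image coincides with that of $\pi$ up to scalars and $Q$ is surjective. By the Stone--von Neumann theorem \cites{Segal1963,Weil1964}, there is a unitary $V_Q$ with $V_Q\pi(z)V_Q^*=\overline{\xi(z)}\pi(Qz)$. We then set $\eta_Q=\overline\xi$, which is continuous.

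\emph{Schwartz invariance.} This is where the explicit structure of $M$ is needed. The plan is to factor $Q$ into generators whose implementers are explicit:
\begin{itemize}
\item[(a)] automorphisms $A\times (A^t)^{-1}$ induced by $A\in\Aut(M)$, implemented by $f\mapsto f\circ A^{-1}$ (times a Jacobian);
\item[(b)] shears $(m,\chi)\mapsto(m,\chi+Bm)$ with $B\colon M\to\widehat M$ symmetric, implemented by multiplication by a ``quadratic character'' $e^{i\langle x,Bx\rangle/2}$-type phases;
\item[(c)] partial Fourier transforms in the $\R^p$ and $F$ coordinates.
\end{itemize}
Each of these visibly preserves $\mathcal S(M)$. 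Since $V_Q$ is unique up to a scalar (Lemma~\ref{lem:scalar-operator} together with irreducibility), the implementer of a composite equals the product of the implementers up to a phase. Hence it suffices to show that these elements generate the symplectic group of $G$.

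\emph{Main obstacle.} The generation step is the hard part. Topological automorphisms of $G=\R^{2p}\times\Z^q\times\T^q\times F\times\widehat F$ must preserve the identity component $\R^{2p}$, the compact part, and related subgroups. It follows that $Q$ is block triangular with respect to the filtration by connected component and compact and torsion parts, with mixing terms such as homomorphisms $\Z^q\to\R^{2p}$ and $\R^{2p}\to\T^q$. For the diagonal block on $\R^{2p}$, the generation is the metaplectic fact $\mathrm{Sp}(2p,\R)$. The $\Z^q\times\T^q$ block is handled by $\GL_q(\Z)$ together with integral symmetric shears, and the finite block by a finite Weil-representation argument. The mixing terms are realised by shears of type (b), with $B$ mixing coordinates, and by translating the $\Z^q$ variables by real vectors through modulation. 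I expect verifying that every mixing term arises this way to be the most technical point. Failing that, I would fall back on Weil's general result \cite{Weil1964} that the metaplectic representation on $\mathcal S(M)$ (Schwartz--Bruhat) is defined for all of $\mathrm{Sp}(G)$ and preserves $\mathcal S(M)$.
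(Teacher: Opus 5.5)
The existence part has the same skeleton as the paper's proof, and it is correct apart from two slips. The ratio $c(z,w)=\beta(z,w)\overline{\beta(Qz,Qw)}$ is a continuous symmetric bicharacter. You write it as the coboundary of a continuous $\xi$, and the Stone--von Neumann (Weil) implementation theorem then produces $V_Q$.

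The paper obtains the coboundary by citing Kleppner and Digernes--Varadarajan; you build it coordinatewise. That can be made to work, but you start from the incorrect claim that $z\mapsto c(z,z)$ is a character. In fact $c(z+w,z+w)=c(z,z)c(w,w)c(z,w)^2$. What you actually need, and later state, is the functional equation $\xi(z+w)=\xi(z)\xi(w)c(z,w)$.

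A uniform way to get $\xi$ using only Lemma~\ref{lem:character-extension}:
\begin{itemize}
\item Because $c$ is symmetric and continuous, $E=G\times\T$ with product $(z,s)(w,t)=(z+w,st\,c(z,w))$ is a locally compact abelian group.
\item It contains $\{0\}\times\T$ as a closed subgroup.
\item Extending the character $(0,s)\mapsto s$ to some $\Xi\in\widehat E$ gives the continuous solution $\xi(z)=\overline{\Xi(z,1)}$.
\end{itemize}

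With this normalization, $z\mapsto\xi(z)\pi(Qz)$ has cocycle $\overline\beta$. Hence $V_Q\pi(z)V_Q^*=\xi(z)\pi(Qz)$, so $\eta_Q=\xi$, not $\overline\xi$. This second slip does not affect the statement.

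For $V_Q\mathcal S(M)=\mathcal S(M)$, the paper does exactly your fallback. It quotes Weil's result (nos.~11--12) that the unitary normalizers of the Heisenberg representation preserve the Schwartz--Bruhat space. Your proof is therefore complete only through that fallback. The factorization route would give a self-contained argument, but generating the symplectic group of $G$ by (a)--(c) is the entire difficulty, and you have not established it.

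If you pursue that route, fix two points:
\begin{itemize}
\item Integral symmetric shears act trivially on $\Z^q\times\T^q$, since $Sb\in\Z^q$ vanishes in $\T^q$. You need real symmetric $S$ in $(b,t)\mapsto(b,t+Sb)$, implemented by multiplication by $e^{\pi i b^tSb}$.
\item An implementer is unique only up to $\T\cdot\pi(G)$ when $\eta_Q$ is not fixed. This is harmless, because each $\pi(w)$ preserves $\mathcal S(M)$.
\end{itemize}
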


\begin{proof}[Proof sketch]
This is a well-known consequence of Weil's theorem
and his invariance result for the Schwartz--Bruhat space~\cite{Weil1964}, recalled
below. We give the argument in our conventions.
Put $\mu(z,w)=\overline{\beta(z,w)}$. Since $Q$ preserves
$\rho_G$, the quotient $\mu(Qz,Qw)/\mu(z,w)$ is a symmetric
continuous bicharacter. By \cite{Kleppner1965}*{Lemma~7.2} and \cite{DigernesVaradarajan2004}*{Lemma~3(1)} respectively,
there exists a continuous function
$\eta_Q:G\to\mathbb{T}$ such that such that
\begin{equation}\label{eq:unitary-implementation-phase}
 \frac{\eta_Q(z+w)}{\eta_Q(z)\eta_Q(w)}
 =\frac{\mu(Qz,Qw)}{\mu(z,w)}.
\end{equation}

Consider the Heisenberg group $H_\mu=G\times\T$ with product
$(z,t)(w,u)=(z+w,tu\mu(z,w))$, represented on $L^2(M)$ by
$(z,t)\mapsto t\pi(z)$. The phase equation makes
$(z,t)\mapsto(Qz,\eta_Q(z)t)$ an automorphism of $H_\mu$
fixing its centre pointwise. Weil's implementation theorem
therefore gives a unitary $V_Q$ satisfying the asserted
covariance relation; see~\cite{Weil1964}*{Chapter~I, no.~10,
Theorem~1 and equation~(15), p.~157}. Our convention is
$\pi(m,\chi)=U(-m,\chi)$ in Weil's notation.

Finally, Weil proves that these unitary normalizers preserve
the Schwartz--Bruhat space; see~\cite{Weil1964}*{Chapter~I,
nos.~11--12, pp.~157--159}. For the elementary group
$M=\R^p\times\Z^q\times F$, this space is precisely
$\mathcal S(M)$ defined above. Thus
$V_Q\mathcal S(M)=\mathcal S(M)$.
\end{proof}

\subsection{Li's Heisenberg bimodule}
We recall Li's construction of Morita equivalences between $A_\te$ and $A_{g\te}$ for $g\in\rm{SO}(n,n|\Z)$ with $g\te$ defined \cite{Li2004}. We construct embedding maps in the sense of Definition~\ref{def:embedding-map} to define the bimodule structure on $\mathcal S(M)$. From now on, $\mathcal T_n$ abbreviates $\mathcal T_n(\R)$.
We start with the following lemma:
\begin{lem}\label{special from g}
Let $g=\begin{pmatrix}
    A & B\\
    C & D\\
\end{pmatrix}\in\rm{SO}(n,n|\Z)$, and let $p\in\Z_{\geq0}$ satisfy $2p\leq n$. Put $q=n-2p$. Then the following are equivalent:
\begin{enumerate}
    \item There is some $\te\in\mathcal{T}_n$ for which $C\te+D$ is invertible and $(C\te+D)^{-1}C=\begin{pmatrix}
        F_{11} & 0\\
        0 & 0\\
    \end{pmatrix}$ for some $F_{11}\in\rm{GL}_{2p}(\R)$.
    \item There is a unique $Z\in\mathcal{T}_{2p}\cap \rm{M}_{2p}(\Q)$ such that
    \[
    C=\begin{pmatrix}
        C_{11} & 0\\
        C_{21} & 0\\
    \end{pmatrix},\quad D=\begin{pmatrix}
        -C_{11}Z & D_{12}\\
        -C_{21}Z & D_{22}\\
    \end{pmatrix}
    \] where $C_{11}\in\rm{M}_{2p}(\Z)$, $C_{21}\in\rm{M}_{q\times2p}(\Z)$, $D_{12}\in\rm{M}_{2p\times q}(\Z)$ and $D_{22}\in\rm{M}_q(\Z)$.
\end{enumerate}
In this case, the matrix
$\begin{pmatrix}C_{11}&D_{12}\\C_{21}&D_{22}\end{pmatrix}$
is invertible.
Moreover, for any $\te\in\mathcal T_n$ in the block form
$\te=\begin{pmatrix}\te_{11}&\te_{12}\\\te_{21}&\te_{22}\end{pmatrix}$,
where $\te_{11}$ has size $2p\times2p$, the matrix $C\te+D$ is
invertible if and only if $\te_{11}-Z$ is invertible. In this case,
\begin{equation}\label{eq:special-form-inverse}
(C\te+D)^{-1}C=\begin{pmatrix}
    (\te_{11}-Z)^{-1} & 0\\
    0 & 0
\end{pmatrix}.
\end{equation}
\end{lem}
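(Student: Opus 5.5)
Both implications rest on the relations $g^tJ_{n,n}g=J_{n,n}$, which give $A^tC+C^tA=0$, $B^tD+D^tB=0$, $A^tD+C^tB=I_n$. Equivalently, $gJ_{n,n}g^t=J_{n,n}$, which gives $CD^t+DC^t=0$. Thus $CD^t$ is skew-symmetric, and $(C\ D)$ has full rank $n$ because $g$ is invertible.

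For (1)$\Rightarrow$(2), set $F=(C\te+D)^{-1}C$. From $(C\te+D)F=C$ we see that the last $q$ columns of $C$ vanish, since the last $q$ columns of $F$ vanish. Write $C=\begin{pmatrix}C_{11}&0\\C_{21}&0\end{pmatrix}$ and put $E=C\te+D$. Comparing the first $2p$ columns gives $E_{\cdot,1}F_{11}=C_{\cdot,1}$, where the subscript $\cdot,1$ denotes the first block column. Since $C_{\cdot,1}=C\te_{\cdot,1}+D_{\cdot,1}$ restricted appropriately, we get $D_{\cdot,1}=C_{\cdot,1}(F_{11}^{-1}-\te_{11})$. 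Define $Z=\te_{11}-F_{11}^{-1}$, so that $D_{\cdot,1}=-C_{\cdot,1}Z$, which is the asserted shape.

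Next we show that $Z$ is skew, rational and unique.
\begin{itemize}
\item Skewness: $CD^t$ has first-block contribution $-C_{\cdot,1}Z^tC_{\cdot,1}^t$, and the relation $CD^t+DC^t=0$ becomes $C_{\cdot,1}(Z+Z^t)C_{\cdot,1}^t=0$. The columns of $C_{\cdot,1}$ are independent, because $C_{\cdot,1}=E_{\cdot,1}F_{11}$ and $E$ is invertible. So $C_{\cdot,1}$ has a left inverse, and $Z+Z^t=0$.
\item Rationality and uniqueness: the equation $D_{\cdot,1}=-C_{\cdot,1}Z$ with $C_{\cdot,1}$ an integer matrix of full column rank determines $Z$ uniquely, and a rational left inverse gives $Z\in \rm M_{2p}(\Q)$.
\end{itemize}
Invertibility of $\begin{pmatrix}C_{11}&D_{12}\\C_{21}&D_{22}\end{pmatrix}$ follows from $(C\ D)$ having rank $n$. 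The columns of $D_{\cdot,1}$ lie in the span of the columns of $C_{\cdot,1}$, so the column space of $(C\ D)$ is spanned by the columns of $C_{\cdot,1}$ and $D_{\cdot,2}$.

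For (2)$\Rightarrow$(1) and the "moreover" part, compute directly:
\[
C\te+D=\begin{pmatrix}C_{11}&D_{12}\\C_{21}&D_{22}\end{pmatrix}\begin{pmatrix}\te_{11}-Z&\te_{12}\\0&I_q\end{pmatrix}.
\]
The first factor is invertible; we note that this argument uses only (2) and the rank of $(C\ D)$, not $\te$. Hence $C\te+D$ is invertible if and only if $\te_{11}-Z$ is. In that case,
\[
(C\te+D)^{-1}C=\begin{pmatrix}\te_{11}-Z&\te_{12}\\0&I\end{pmatrix}^{-1}\begin{pmatrix}I_{2p}&0\\0&0\end{pmatrix}.
\]
Inverting the block upper triangular matrix gives \eqref{eq:special-form-inverse}. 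To obtain (1), take $\te$ with $\te_{11}=Z+J$ for any invertible skew $J$, for example the standard symplectic matrix (recall $2p$ is even), and set the other blocks to $0$.

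The main obstacle is the bookkeeping in (1)$\Rightarrow$(2): extracting $Z$ and proving it is skew-symmetric. This step relies on the relation $CD^t+DC^t=0$ derived from $g\in\rm{SO}(n,n|\Z)$, together with the full column rank of $C_{\cdot,1}$.
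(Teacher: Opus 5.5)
Your proof is correct. The paper gives no argument of its own here; it simply cites Lemma~3.3 of Li's paper. Your proposal is therefore a self-contained replacement, and it follows the natural direct route:
\begin{itemize}
\item read off the vanishing last $q$ columns of $C$ from $C=(C\theta+D)F$;
\item set $Z=\theta_{11}-F_{11}^{-1}$;
\item for the converse and the ``moreover'' part, use the factorization
\[
C\theta+D=\begin{pmatrix}C_{11}&D_{12}\\C_{21}&D_{22}\end{pmatrix}\begin{pmatrix}\theta_{11}-Z&\theta_{12}\\0&I_q\end{pmatrix}.
\]
\end{itemize}
The first factor is invertible by your rank argument. As you observe, that argument uses only the shape in (2). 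So it also shows that $C_{\cdot,1}$ has full column rank whenever (2) holds, which makes the uniqueness of $Z$ in (2) automatic and keeps (2)$\Rightarrow$(1) independent of the first direction.

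Two small remarks.
\begin{enumerate}
\item In (1)$\Rightarrow$(2), the identity you quote should read
\[
E_{\cdot,1}=C\theta_{\cdot,1}+D_{\cdot,1}=C_{\cdot,1}\theta_{11}+D_{\cdot,1},
\]
not ``$C_{\cdot,1}=\dots$''. Substituting this into $E_{\cdot,1}F_{11}=C_{\cdot,1}$ gives your formula $D_{\cdot,1}=C_{\cdot,1}(F_{11}^{-1}-\theta_{11})$.
\item There is a slightly more conceptual route to the skewness of $Z$ than the left-inverse argument. The relation $CD^t+DC^t=0$ and $\theta^t=-\theta$ give $(C\theta+D)C^t=-C(C\theta+D)^t$. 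Hence $(C\theta+D)^{-1}C$ is skew-symmetric whenever $C\theta+D$ is invertible. Therefore $F_{11}$, $F_{11}^{-1}$ and $Z=\theta_{11}-F_{11}^{-1}$ are all skew. Both arguments rest on the same relation $CD^t+DC^t=0$, so either works.
\end{enumerate}
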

\begin{proof}
    See \cite{Li2004}*{Lemma~3.3}.
\end{proof}
The following lemma shows that any $g\in\rm{SO}(n,n|\Z)$ can be brought to the above special form by an integral change of coordinates.
\begin{lem}\label{lem:special-form-reduction}
    Let $g=\begin{pmatrix}
        A & B\\
        C & D\\
    \end{pmatrix}\in\rm{SO}(n,n|\Z)$. Then there is an $X\in\rm{GL}_n(\Z)$ such that $g\rho(X)$ satisfies condition \textup{(1)} of Lemma~\ref{special from g} for some $p$.
\end{lem}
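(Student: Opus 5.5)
We take $X$ to straighten out the kernel of $C$, verify condition~(2) of Lemma~\ref{special from g} for $g\rho(X)$ directly, and then obtain condition~(1) from the equivalence in that lemma. Write $g\rho(X)=\begin{pmatrix}AX&B(X^t)^{-1}\\CX&D(X^t)^{-1}\end{pmatrix}$, so only $C'=CX$ and $D'=D(X^t)^{-1}$ matter. The lattice $\ker C\cap\Z^n$ is saturated in $\Z^n$, of rank $q:=n-\rank C$. Hence there is $X\in\GL_n(\Z)$ whose last $q$ columns form a $\Z$-basis of it. Then $C'=(C_1\ 0)$, where $C_1\in\rm{M}_{n\times r}(\Z)$, $r=\rank C$, has trivial kernel. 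Write $D'=(D_1\ D_2)$ accordingly.

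Next we show that $D_1=-C_1Z$ for a rational skew-symmetric $r\times r$ matrix $Z$. Since $g\rho(X)\in\rm{SO}(n,n|\Z)$, we also have $g\rho(X)J_{n,n}(g\rho(X))^t=J_{n,n}$. Its lower-right block says that $C'D'^t$ is skew, i.e.\ $C_1D_1^t$ is skew.
\begin{itemize}
\item The image of $C_1D_1^t$ lies in $V=\operatorname{im}C_1$. By skewness, so does the image of $D_1C_1^t$.
\item Since $C_1$ has trivial kernel, $C_1^t$ is surjective onto $\R^r$. Hence $\operatorname{im}D_1=\operatorname{im}(D_1C_1^t)\subseteq V$, so $D_1=C_1Y$ for a unique real $Y$.
\item $Y$ is rational, because $C_1$ is an injective integral matrix; for instance, $C_1$ has an invertible $r\times r$ integral minor.
\item Now $C_1(Y^t+Y)C_1^t=0$. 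Surjectivity of $C_1^t$ and injectivity of $C_1$ force $Y^t=-Y$. Put $Z=-Y$.
\end{itemize}
Partitioning the rows as well gives exactly the block shape in~(2), with $C_{11},C_{21}$ taken from $C_1$ and $D_{12},D_{22}$ from $D_2$.

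The remaining point, which I expect to be the main obstacle, is that $r$ must be even, $r=2p$. This is needed because condition~(1) asks for $F_{11}=(\te_{11}-Z)^{-1}$ to be invertible for some skew $\te_{11}$, which is impossible in odd size. For this we will use the classical fact about a split quadratic form on $\R^{2n}$ (here $J_{n,n}$). Its maximal isotropic subspaces fall into two families: $L,L'$ lie in the same family iff $\dim(L\cap L')\equiv n\pmod 2$. An isometry preserves each family iff its determinant is $1$.

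Apply $(g^t)^{-1}=J_{n,n}gJ_{n,n}$, an isometry of determinant $1$, to $L_0=\{(0,y)\}$. The image is the row space of $(C\ D)$. It is maximal isotropic because $CD^t$ is skew and $(C\ D)$ has rank $n$, being rows of an invertible matrix. Its intersection with $L_0$ is $\{(0,y):y\in\ker C^t\}$, of dimension $n-r$. Since $\det g=1$, the family criterion gives $n-r\equiv n$, so $r$ is even. (For $n=1$ the swap matrix, with $\det=-1$ and odd $\rank C$, shows the parity really comes from $\det g=1$.)

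Finally, with $r=2p$ we have verified~(2) for $g\rho(X)$. Rather than using the formal equivalence of Lemma~\ref{special from g}, we choose $\te$ directly. Take $\te$ with $\te_{11}=Z+\begin{pmatrix}0&I_p\\-I_p&0\end{pmatrix}$ and $\te_{12},\te_{22}$ arbitrary. Then $\te_{11}-Z$ is invertible. The ``moreover'' part of the lemma then gives invertibility of $CX\te+D(X^t)^{-1}$ and formula~\eqref{eq:special-form-inverse} with $F_{11}=(\te_{11}-Z)^{-1}$ invertible. This is condition~(1) for $g\rho(X)$.
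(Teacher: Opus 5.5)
Your proof is correct, but it is a different route from the paper's: the paper gives no argument here and simply cites Li~\cite{Li2004}*{Lemma~3.5}. You give a self-contained linear-algebra proof in four steps:
\begin{enumerate}
\item $X$ straightens the saturated lattice $\ker C\cap\Z^n$.
\item The lower-right block of $gJ_{n,n}g^t=J_{n,n}$ says $CD^t$ is skew, which forces $D_1=-C_1Z$ with $Z$ rational and skew.
\item The two families of maximal isotropic subspaces of $J_{n,n}$ force $\operatorname{rank}C$ to be even.
\item The choice $\te_{11}=Z+J_0$ gives an explicit witness for condition~(1).
\end{enumerate}
The citation buys brevity. Your version shows that $\det g=1$ enters only through the parity of $\operatorname{rank}C$; your $n=1$ swap example shows this parity really fails in $\mathrm{O}(n,n|\Z)$. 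It also does not presuppose that $g\te$ is defined for some $\te$; it produces such a $\te$. Had that existence been known in advance, evenness would be immediate: $CD^t+DC^t=0$ and $\te^t=-\te$ make $(C\te+D)^{-1}C$ skew-symmetric, and its rank is $\operatorname{rank}C$.

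There is one slip in the parity step. The matrix $(g^t)^{-1}=J_{n,n}gJ_{n,n}$ sends $(0,y)$ to $(Cy,Ay)$. So $(g^t)^{-1}L_0$ is not the row space of $(C\ D)$; that row space is $g^tL_0=\{(C^ty,D^ty)\}$. Since $gJ_{n,n}g^t=J_{n,n}$, the matrix $g^t$ is also a determinant-one isometry, so your argument goes through with $g^t$ in place of $(g^t)^{-1}$. The intersection with $L_0$ is then $\{(0,D^ty):y\in\ker C^t\}$, not $\{(0,y):y\in\ker C^t\}$. It still has dimension $n-r$, because $(C\ D)$ has rank $n$. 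The conclusion is unaffected in any case: $(g^t)^{-1}L_0$ meets $L_0$ in $\{(0,Ay):y\in\ker C\}$, which also has dimension $n-r$.

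Finally, you can drop the appeal to the ``moreover'' clause of Lemma~\ref{special from g}. The columns of $(CX\ \ D(X^t)^{-1})$ span $\R^n$ and lie in the span of the columns of $C_1$ and $D_2$, so the square matrix $(C_1\ D_2)$ is invertible. Then
\[
 CX\te+D(X^t)^{-1}=(C_1\ D_2)\begin{pmatrix}\te_{11}-Z&\te_{12}\\0&I_q\end{pmatrix},
\]
which gives the invertibility and the form $\begin{psmallmatrix}(\te_{11}-Z)^{-1}&0\\0&0\end{psmallmatrix}$ directly. This makes your proof independent of Li's lemma altogether.
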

\begin{proof}
    See~\cite{Li2004}*{Lemma~3.5}.
\end{proof}
For the
remainder of this subsection, suppose that $g$ already
satisfies the special form of Lemma~\ref{special from g},
and put $q=n-2p$.

We now recall the explicit embedding maps
in Li's Proposition~4.1; these are the formulas on which
the later cyclic group action is constructed. Every matrix $g$ in the special form of Lemma~\ref{special from g} uniquely determines a rational skew-symmetric matrix $Z$. We can choose $m\geq 1$ such that $mZ\in\rm{M}_{2p}(\Z)$. The elementary divisor theorem for integral alternating forms gives an $R_Z\in\rm{GL}_{2p}(\Z)$ and nonzero integers $h_1,h_2,\cdots,h_r$ $(r\leq p)$ such that
\begin{equation}\label{eq:Z-normal}
mZ=R_Z^t\begin{pmatrix}
    0 & H_r & 0\\
    -H_r & 0 & 0\\
    0 & 0 & 0
\end{pmatrix} R_Z, \qquad H_r=\diag(h_1,h_2,\cdots,h_r).
\end{equation}
This is exactly the normal form used in the proof of \cite{Li2004}*{Proposition~4.1}. Write each $h_j/m$ in lowest terms as
\[
\cfrac{h_j}{m}=\cfrac{m_j}{n_j}, \quad \gcd(m_j,n_j)=1, \quad n_j>0,
\] and put
\[
F_{\rm{fin}}=\Z_{n_1}\times\Z_{n_2}\times\cdots\times\Z_{n_r}, \quad M=\R^p\times\Z^q\times F_{\rm{fin}}.
\]
If $r=0$, all finite blocks are understood as absent. We identify
$\widehat{F_{\rm fin}}$ with $F_{\rm fin}$ using the pairing
\[
 \langle a,b\rangle
 =\exp\!\left(2\pi i\sum_{j=1}^r\frac{a_jb_j}{n_j}\right).
\]
Thus
\[
G=M\times\widehat{M}\cong \R^p\times{\R^*}^p\times\Z^q\times\T^q\times F_{\rm{fin}}\times F_{\rm{fin}}.
\]
Define
\[
H^*=\R^p\times{\R^*}^p\times\R^q\times{\R^*}^q\times\R^r\times{\R^*}^r.
\]
The real dual coordinate spaces are identified with Euclidean
spaces through the standard characters. Let
\[
 \pi_G:\R^p\times\R^{*p}\times\Z^q\times\R^{*q}
 \times\Z^r\times\Z^r\longrightarrow G
\]
be the quotient map that is the identity on the first three
blocks, reduces the fourth block modulo $\Z^q$, and reduces
the coordinates in each of the last two blocks modulo the
corresponding $n_j$. We use the matrices
\[
 J_0=\begin{pmatrix}
 0 & I_p\\
 -I_p & 0\\
 \end{pmatrix},
 \quad
 J_1=\begin{pmatrix}
 J_0 &0 & 0\\
 0 & 0 & I_q\\
 0 & -I_q & 0\\
 \end{pmatrix},
\]
\[
 \Ld=\diag(n_1^{-1},\ldots,n_r^{-1}),
 \qquad
 J_2=\begin{pmatrix}
 0 & \Ld\\
 -\Ld & 0\end{pmatrix},
 \qquad
 J=\begin{pmatrix}J_1&0\\0&J_2\end{pmatrix}.
\]
If $z,w\in G$ are represented by elements of the indicated subgroup of $H^*$, then
\[
 \rho_G(z,w)=\exp(2\pi i z^tJw).
\]
The expression is independent of the chosen representatives,
since the periods pair integrally with this subgroup under $J$.

\begin{dfn}\label{def:embedding-map}
Let $\te\in\mathcal T_n$. An \emph{embedding map} for $\te$
is a real linear map $T:\R^n\to H^*$ satisfying the following
conditions \cite{Li2004}*{Definition~2.1}:
\begin{enumerate}
 \item $T(\Z^n)\subseteq
 \R^p\times\R^{*p}\times\Z^q\times\R^{*q}
 \times\Z^r\times\Z^r$, so $\pi_G\circ T|_{\Z^n}$
 defines a homomorphism into $G$.
 \item $\pi_G(T(\Z^n))$ is a lattice in $G$, that is,
 a discrete, cocompact subgroup.
 \item $T^tJT=\te$.
\end{enumerate}
\end{dfn}
Let $\gamma:H^*\to\R^p\times\R^{*p}\times\R^q$ be the
projection onto the first three blocks. Under condition~\textup{(1)},
condition~\textup{(2)} is equivalent to the invertibility of
\[
 \tilde T:=\gamma\circ T:
 \R^n\longrightarrow\R^p\times\R^{*p}\times\R^q.
\]
We also write $T$ for the induced map $\pi_G\circ T|_{\Z^n}$
when its values are regarded as elements of $G$.

Fix $\te\in\mathcal{T}_n$ such that $g\te$ is defined, and write it in the block form
\[
\te=\begin{pmatrix}
    \te_{11} & \te_{12}\\
    \te_{21} & \te_{22}\\
\end{pmatrix}
\] where $\te_{11}$ has size $2p\times2p$. Lemma~\ref{special from g} shows that $g\te$ is defined precisely when $\te_{11}-Z$ is invertible. This matrix is skew-symmetric, since both $\te_{11}$ and $Z$ are. Set $F_0=(\te_{11}-Z)^{-1}$. Then there is a $T_{11}\in\rm{GL}_{2p}(\R)$ such that $T_{11}^tJ_0T_{11}=\te_{11}-Z$. Put $T_{31}=\te_{21}$ and choose $T_{32}\in\rm{M}_{q}(\R)$ with $T_{32}-T_{32}^t=\te_{22}$; for instance, $T_{32}=\te_{22}/2$. Let $M_r=\diag(m_1,m_2,\cdots,m_r)$ and define
\[
E_r=\begin{pmatrix}
    M_r & 0 & 0\\
    0 & I_r & 0\\
\end{pmatrix}\in \rm{M}_{2r\times 2p}(\R),
\] set
\[
T_1=\begin{pmatrix}
    T_{11} & 0\\
    0 & I_q\\
    T_{31} & T_{32}
\end{pmatrix},\quad T_2=\begin{pmatrix}
    E_rR_Z & 0_{2r\times q}
\end{pmatrix},
\quad T=\begin{pmatrix}
    T_1\\
    T_2\\
\end{pmatrix}.
\]
This first embedding realizes the original parameter $\te$.
The map $T:\R^n\to H^*$ is an embedding map for $\te$
in the sense of Definition~\ref{def:embedding-map}. Indeed,
its restriction to $\Z^n$ takes values in the subgroup on
which the quotient map to $G$ is defined, and a direct
block computation gives $T^tJT=\te$. The map $\tilde{T}:=\gamma\circ T:\R^n\to
\R^p\times{\R^*}^p\times\R^q$ has matrix
\[
\begin{pmatrix}
    T_{11} & 0\\
    0 & I_q\\
\end{pmatrix},
\]
which is invertible. Consequently, the image
$D:=T(\Z^n)$ in $G$ is a lattice. We will construct a second
embedding map $S$ such that $S(\Z^n)=D^\perp$, where the
\emph{symplectic dual} of $D=T(\Z^n)$ is the subgroup
\[
D^\perp:=\{y\in G:\rho_G(y,d)=1\text{ for every }d\in D\}.
\]
Note that
$\gcd(m_j,n_j)=1$, so there are integers $c_j,d_j$ such that $c_jm_j+d_jn_j=1$. Set
\[
Q_2=\diag(c_1,c_2,\ldots,c_r),\qquad
\Gm=\diag(\Ld,\Ld,-I_{2p-2r}).
\]
We now define an $(n+q+2r)\times n$ real matrix
\[
S=\begin{pmatrix}
    J_0(T_{11}^t)^{-1}R_Z^t\Gm & -J_0(T_{11}^t)^{-1}T_{31}^t\\
    0 & I_q\\
    0 & T_{32}^t\\
    \begin{pmatrix}
        0 & -I_r & 0\\
        Q_2 & 0 & 0\\
    \end{pmatrix} & 0
\end{pmatrix}.
\]
By Li~\cite{Li2004}*{Proposition~4.1}, $S$ is an embedding map for
$-\te'$, where $\te':=-S^tJS$, and $S(\Z^n)=D^\perp$ in $G$.
The map $\tilde{S}:=\gamma\circ S:\R^n\to
\R^p\times{\R^*}^p\times\R^q$ has matrix
\[
\begin{pmatrix}
    J_0(T_{11}^t)^{-1} R_Z^t\Gm & -J_0(T_{11}^t)^{-1}T_{31}^t\\
    0 & I_q\\
\end{pmatrix}.
\]

Define
\[
\A=-\tilde{T}^{-1}\tilde{S},\qquad
\Phi=\begin{pmatrix}
    F_0 & 0\\
    0 & 0
\end{pmatrix},\qquad F_0=(\te_{11}-Z)^{-1}.
\]
Following \cite{Schwarz1998}*{Equation~53}, put
\[
C'=\A^{-1}\Phi,\quad D'=\A^{-1}-C'\te,
\]
\[
A'=\A^t+\te'C',\quad B'=\te'\A^{-1}-A'\te.
\]
By Li~\cite{Li2004}*{Proposition~4.1 and Remark~4.3(2)},
\[
g'=\begin{pmatrix}
    A' & B'\\
    C' & D'
\end{pmatrix}\in\rm{SO}(n,n|\Z),\qquad g'\te=\te',
\] and there exist $R_0\in\rm{GL}_n(\Z)$ and
$K\in\mathcal T_n\cap\rm{M}_n(\Z)$ such that
\[
g=\rho(R_0)\nu(K)g'.
\]
The matrices $g', K, R_0$ depend only on $g$ and its special-form data, not on the particular $\te$.

Before describing the bimodule structure, we specify the two noncommutative
torus algebras and their smooth subalgebras. In the Weyl convention
fixed in the introduction, we use the twisted group $\mathrm{C}^*$-algebra
realizations
\[
 \mathcal A:=A_\te\cong C^*(\Z^n,\sigma_\te),\qquad
 \mathcal B:=A_{\te'}\cong C^*(\Z^n,\sigma_{\te'}).
\]
Here $\te'=g'\te$ is the parameter determined by the embedding
map $S$. Denote by $\mathcal A^\infty:=A_\te^\infty=\mathcal S(\Z^n,\sigma_\te)$
and $\mathcal B^\infty:=A_{\te'}^\infty=\mathcal S(\Z^n,\sigma_{\te'})$
the dense smooth subalgebras of $\mathcal A$ and $\mathcal B$,
respectively, consisting of Fourier series with rapidly decreasing coefficients.

We now construct the explicit equivalence bimodule on
$E^\infty:=\mathcal S(M)$.
Li~\cite{Li2004}*{Proposition~2.2}, based on Rieffel's construction~\cite{Rieffel1988}*{Theorem~2.15 and Corollary~3.8}, states that if $T$ and $S$ are embedding maps for $\te$ and $-\te'$ with
\[
S(\Z^n)=T(\Z^n)^\perp
\]
then $E^\infty$ is a complete Morita equivalence
$\mathcal B^\infty$--$\mathcal A^\infty$ bimodule, whose
completion in the right Hilbert-module norm specified below is a $\mathcal B$--$\mathcal A$ imprimitivity bimodule.
The explicit formulas for the module actions and inner products are given below.

For $x\in\Z^n$, write
\[
\pi_G(T(x))=(T'(x),T''(x)),\qquad
\pi_G(S(x))=(S'(x),S''(x))\in M\times\widehat{M}.
\]
Let $J'$ be the strictly upper-triangular
part of $J$, so that $J=J'-(J')^t$. For $x\in\Z^n$, define
operators on $\mathcal{S}(M)$ by
\begin{equation}\label{eq:R-L-operators}
 \begin{aligned}
 (R_xf)(m)
 &=\exp\!\left(-\pi i\,T(x)^tJ'T(x)\right)
   \langle{m},{T''(x)}\rangle f(m-T'(x)),\\
 (L_xf)(m)
 &=\exp\!\left(-\pi i\,S(x)^tJ'S(x)\right)
   \langle {m}, {-S''(x)}\rangle f(m+S'(x)).
 \end{aligned}
\end{equation}
These operators define the right $\mathcal A^\infty$-module action and the left $\mathcal B^\infty$-module action on $E^\infty$ by
\[
f\cdot U_x^{\te}=R_xf,\qquad U_x^{\te'}\cdot f=L_xf.
\]
In particular,
\[
R_x R_y=\overline{\sm_{\te}(x,y)}R_{x+y},\quad L_xL_y=\sm_{\te'}(x,y)L_{x+y}.
\]
Define the phase factors
\[
c_T(x):=\exp\!\left(-\pi i\,T(x)^tJ'T(x)\right),
\qquad
c_S(x):=\exp\!\left(-\pi i\,S(x)^tJ'S(x)\right).
\]
Then, in terms of the Heisenberg--Weyl operators,
\[
R_x=c_T(x)\pi(Tx),\qquad L_x=c_S(x)\pi(-Sx).
\]

Fix Haar measure $dm$ on $M$ and use the convention
\[
 (f,h)_2=\int_M\overline{f(m)}h(m)\,dm
\]
for the Hilbert-space inner product, which is linear in its
second variable. For $f,h\in\mathcal S(M)$, the right
$A_{\te}^\infty$-valued inner product has Fourier coefficients
\begin{equation}\label{eq:right ip}
 \langle f,h\rangle_{A_{\te}^\infty}(x)
 =\exp\!\left(-\pi i\,T(x)^tJ'T(x)\right)
  \int_M\langle m,-T''(x)\rangle\,
  h(m+T'(x))\,\overline{f(m)}\,dm
 =(f,R_{-x}h)_2.
\end{equation}
The left $A_{\te'}^\infty$-valued inner product has Fourier coefficients
\begin{equation}\label{eq:left ip}
 {}_{A_{\te'}^\infty}\langle f,h\rangle(x)
 =\kappa\exp\!\left(\pi i\,S(x)^tJ'S(x)\right)
  \int_M\langle m,S''(x)\rangle\,
  f(m)\,\overline{h(m+S'(x))}\,dm
 =\kappa(L_xh,f)_2.
\end{equation}
Here $\kappa>0$ is the normalization constant in Li's construction.
These module actions and inner products are those of
Li~\cite{Li2004}*{Proposition~2.2}. Completing $\mathcal S(M)$
in the norm $\|f\|=\|\langle f,f\rangle_{A_\te^\infty}\|_{A_\te}^{1/2}$
gives an $A_{\te'}$--$A_\te$ imprimitivity bimodule $E'$.
The smooth inner products extend continuously to an
$A_\te$-valued right inner product and an $A_{\te'}$-valued
left inner product on $E'$.

\section{A symplectic automorphism for the lattice action}\label{sec:symplectic-construction}

Assume the hypotheses of Theorem~\ref{thm:main}, with $g$
in the special form of Lemma~\ref{special from g}. Thus
$W\in\GL_n(\Z)$ satisfies $W^t\te W=\te$, $W^k=I_n$ and
$1\notin\rm{spec}(W)$, and
$g\rho(W^t)g^{-1}=\rho(P)$ for some $P\in\GL_n(\Z)$.
The matrix $W$ induces the $\Z_k$-action
$\af_W(U_x^\te)=U_{Wx}^\te$ on $A_\te$.

Fix the $A_{\te'}$--$A_\te$ imprimitivity bimodule $E'$ and
the embedding maps $T,S$ constructed in Section~\ref{Prelim}, and recall
that $G=M\times\widehat M$.
Our aim is to construct a symplectic automorphism $Q$ of $G$
satisfying $QT=TW$ as maps $\Z^n\to G$ and $Q^k=\Id_G$.
In the next section, we will use $Q$ to construct a compatible
cyclic group action on the bimodule.

We first transport the left module structure through the integral
shift in the factorization
\[
 g=\rho(R_0)\nu(K)g',\qquad \te'=g'\te,
\]
where $R_0\in\GL_n(\Z)$ and $K$ is an integral
skew-symmetric matrix. Put $\Th=\te'+K$ and, for
$x\in\Z^n$, set
\[
 q_K(x)=\sum_{i<j}K_{ij}x_ix_j,\qquad
 \ep_K(x)=(-1)^{q_K(x)}.
\]
The map
\begin{equation}\label{K isomorphism}
 \Psi_K:A_{\te'}\longrightarrow A_\Th,\qquad
 \Psi_K(U_x^{\te'})=\ep_K(x)U_x^\Th
\end{equation}
is a $*$-isomorphism, with inverse
$U_x^\Th\mapsto\ep_K(x)U_x^{\te'}$.
The transported left action and inner product are
\[
 b\cdot f:=\Psi_K^{-1}(b)\cdot f,\qquad
 {}_{A_\Th}\langle f,h\rangle
 :=\Psi_K\bigl({}_{A_{\te'}}\langle f,h\rangle\bigr).
\]
Together with the original right action and inner product,
these make $E'$ an $A_\Th$--$A_\te$ imprimitivity bimodule.
On $\mathcal S(M)$, the left Heisenberg--Weyl operators are
$\widetilde L_x=\ep_K(x)L_x$. The isomorphism $\Psi_K$
preserves rapid decay, so the transport also gives a smooth
$A_\Th^\infty$--$A_\te^\infty$ equivalence bimodule.

Define
\begin{equation}\label{eq:P_1 def}
 g''=\nu(K)g',\qquad P_1=R_0^{-1}PR_0.
\end{equation}
Then $g=\rho(R_0)g''$, $\Th=g''\te$, and
\begin{equation}\label{g''eq}
 g''\rho(W^t)(g'')^{-1}=\rho(P_1).
\end{equation}
In particular,
\begin{equation}\label{eq:P_1}
 P_1\Th P_1^t=\Th.
\end{equation}
Taking $k$-th powers in~\eqref{g''eq} and using the injectivity
of $\rho$ gives $P_1^k=I_n$. Together with~\eqref{eq:P_1},
this shows that $\af_{P_1^t}$ defines a $\Z_k$-action on $A_\Th$.

The conjugacy relation also connects the two lattice actions
through the embedding maps and constrains the block form of $W$.
\begin{lem}\label{W and A compatibility}
The matrix $\A=-\tilde T^{-1}\tilde S$ satisfies
\[
 W\A=\A P_1^t,\qquad W\Ph W^t=\Ph,
 \qquad \text{ where }~
 \Ph=\begin{pmatrix}F_0&0\\0&0\end{pmatrix}\text{ and } F_0=(\te_{11}-Z)^{-1}.
\]
\end{lem}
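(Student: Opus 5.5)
The plan is to read both identities off the bottom block row of the conjugacy relation~\eqref{g''eq}, using Schwarz's formulas for $g'$ in terms of $\A$ and $\Ph$. Write $g''=\nu(K)g'=\begin{pmatrix}A''&B''\\C''&D''\end{pmatrix}$. Left multiplication by $\nu(K)$ only adds $K$ times the bottom block row to the top block row. Hence the bottom blocks of $g''$ are those of $g'$:
\[
C''=C'=\A^{-1}\Ph,\qquad D''=D'=\A^{-1}-C'\te.
\]
In particular
\[
C'\te+D'=\A^{-1},
\]
where $\A$ is invertible, as implicit in Li's construction recalled above.

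\emph{Step 1.} Rewrite~\eqref{g''eq} as $g''\rho(W^t)=\rho(P_1)g''$. Since $\rho(W^t)=\diag(W^t,W^{-1})$ and $\rho(P_1)=\diag(P_1,(P_1^t)^{-1})$, comparing bottom block rows gives
\[
C'W^t=(P_1^t)^{-1}C',\qquad D'W^{-1}=(P_1^t)^{-1}D'.
\]

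\emph{Step 2.} The invariance $W^t\te W=\te$ gives $W^t\te=\te W^{-1}$. Using Step~1,
\[
(P_1^t)^{-1}\A^{-1}=(P_1^t)^{-1}(C'\te+D')=C'W^t\te+D'W^{-1}=(C'\te+D')W^{-1}=\A^{-1}W^{-1}.
\]
Inverting both sides yields $P_1^t\A^{-1}=\ldots$; more precisely, inverting gives $\A P_1^t=W\A$, which is the first identity.

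\emph{Step 3.} Since $\Ph=\A C'$, the first identity and Step~1 give
\[
W\Ph W^t=W\A C'W^t=\A P_1^t(P_1^t)^{-1}C'=\A C'=\Ph,
\]
which is the second identity.

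I expect no real obstacle. The only points needing care are confirming that $\nu(K)$ leaves the bottom row of $g'$ unchanged, and tracking the transposes and inverses in $\rho$ correctly.
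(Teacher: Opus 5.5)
Your proof is correct and follows the paper's argument essentially line for line. You compare the bottom block rows of $g''\rho(W^t)=\rho(P_1)g''$, which left multiplication by $\nu(K)$ does not change. You then use $C'\te+D'=\A^{-1}$ and $W^t\te=\te W^{-1}$ to get $(P_1^t)^{-1}\A^{-1}=\A^{-1}W^{-1}$, invert this, and derive $W\Ph W^t=\Ph$ from $C'W^t=(P_1^t)^{-1}C'$ with $C'=\A^{-1}\Ph$. The only blemish is the aborted phrase ``$P_1^t\A^{-1}=\ldots$'' in Step~2; delete it, since inverting both sides directly gives $\A P_1^t=W\A$.
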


\begin{prp}\label{prop:W-block}
The matrix $W$ has the form
\[
 W=\begin{pmatrix}W_1&W_2\\0&W_4\end{pmatrix},
 \qquad W_1\in\GL_{2p}(\Z),\quad W_4\in\GL_q(\Z),
\]
and
\[
 W_1^t(\te_{11}-Z)W_1=(\te_{11}-Z),
 \qquad W_1^tZW_1=Z.
\]
\end{prp}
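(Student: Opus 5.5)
We will derive everything from the conjugacy relation \eqref{g''eq} and the description of $C$ and $D$ in Lemma~\ref{special from g}, using Lemma~\ref{W and A compatibility}.

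\textbf{Step 1: block form of $W$.} Write $g''=\begin{psmallmatrix}A''&B''\\C''&D''\end{psmallmatrix}$. Since $g=\rho(R_0)g''$, we have $C''=R_0^tC$ and $D''=R_0^tD$, so $C''$ has the same column pattern as $C$: its last $q$ columns vanish. Comparing the lower-left blocks of $g''\rho(W^t)=\rho(P_1)g''$ gives
\[
 C''W^t=(P_1^t)^{-1}C'' .
\]
The right-hand side has vanishing last $q$ columns. Write $C''=\begin{psmallmatrix}C_1&0\end{psmallmatrix}$ with $C_1$ of size $n\times 2p$, and $W^t=\begin{psmallmatrix}X_{11}&X_{12}\\X_{21}&X_{22}\end{psmallmatrix}$. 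Then $C''W^t=\begin{psmallmatrix}C_1X_{11}&C_1X_{12}\end{psmallmatrix}$, so $C_1X_{12}=0$. By the last assertion of Lemma~\ref{special from g}, the matrix $\begin{psmallmatrix}C_{11}&D_{12}\\C_{21}&D_{22}\end{psmallmatrix}$ is invertible, so $C_1$ has full column rank $2p$ (and so does $R_0^tC_1$). Hence $X_{12}=0$, i.e.\ the lower-left $q\times 2p$ block of $W$ vanishes. Since $W$ is block upper triangular and integral with integral inverse, and $W^{-1}$ is block upper triangular of the same shape, $W_1\in\GL_{2p}(\Z)$ and $W_4\in\GL_q(\Z)$.

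\textbf{Step 2: invariance of $\te_{11}-Z$.} Lemma~\ref{W and A compatibility} gives $W\Ph W^t=\Ph$. With the block form, the upper-left block of $W\Ph W^t$ is $W_1F_0W_1^t$, so $W_1F_0W_1^t=F_0$. Inverting this relation yields $W_1^t(\te_{11}-Z)W_1=\te_{11}-Z$.

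\textbf{Step 3: invariance of $Z$.} The upper-left block of $W^t\te W=\te$ is $W_1^t\te_{11}W_1=\te_{11}$, because the lower-left block of $W$ is zero. Subtracting the relation from Step 2 gives $W_1^tZW_1=Z$.

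\textbf{Expected obstacle.} The main point to check is Step 1. We must verify the bookkeeping of the conjugation relation, namely that the lower-left block of $\rho(P_1)g''$ is $(P_1^t)^{-1}C''$. We must also confirm that $C_1$ indeed has full column rank. That follows because $\begin{psmallmatrix}C_{11}\\C_{21}\end{psmallmatrix}$ consists of the first $2p$ columns of the invertible matrix $\begin{psmallmatrix}C_{11}&D_{12}\\C_{21}&D_{22}\end{psmallmatrix}$ of Lemma~\ref{special from g}, and $R_0^t$ is invertible. If one prefers not to track $R_0$, the same argument can be run with $g$ itself and $P$ in place of $P_1$, using \eqref{eq:conjugacy}.
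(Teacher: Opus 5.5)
Your proof is correct and essentially the paper's. Steps~2 and~3 match the paper's argument exactly. For Step~1, the paper reads off $W_3=0$ from the lower-left block $W_3F_0W_1^t=0$ of $W\Ph W^t=\Ph$, after first getting $(\det W_1)^2=1$ from the upper-left block. You instead read it off the lower-left block of the conjugacy relation, using the full column rank of the first $2p$ columns of $C$. This is the same information: the paper derives $W\Ph W^t=\Ph$ from $C'W^t=(P_1^t)^{-1}C'$ with $C'=\A^{-1}\Ph$, and the first $2p$ columns of $C'$ have full rank precisely because $\A$ and $F_0$ are invertible.
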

For both proofs, see Appendix~\ref{app:block-identities}.

We now construct the automorphism $Q$ using this block form. Write
\[
 G=G_0\times\Z^q\times\T^q,\qquad
 G_0=(\R^p\times\R^{*p})
       \times(F_{\rm fin}\times F_{\rm fin}).
\]
Let
\[
 \phi_F:\Z^{2p}\longrightarrow F_{\rm fin}\times F_{\rm fin},
 \qquad \phi_F(a)=[E_rR_Za],
\]
be the finite component of $T$, and define
$\ta:\Z^{2p}\to G_0$ by
\begin{equation}\label{eq:def-tau}
 \ta(a)=(T_{11}a,\phi_F(a)),\qquad a\in\Z^{2p}.
\end{equation}
Then
\begin{equation}\label{eq:T-tau}
 T(a,b)=\bigl(\ta(a),b,[\te_{21}a+T_{32}b]\bigr),
 \qquad (a,b)\in\Z^{2p}\oplus\Z^q,
\end{equation}
where brackets in the last coordinate denote the class in $\T^q$. Write $\rh_0$
for the restriction of $\rh_G$ to $G_0$. We have
\begin{equation}\label{eq:rho0-tau}
 \rh_0(\ta(a),\ta(a'))
 =\exp(2\pi i\,a^t\te_{11}a').
\end{equation}
Put $Q_{\R}=T_{11}W_1T_{11}^{-1}$ on the real factor.
\begin{lem}\label{lem:finite-quotient}
The homomorphism $\phi_F$ is surjective and has kernel
\[
 \Ld_Z=\{a\in\Z^{2p}:Za\in\Z^{2p}\}.
\]
Consequently,
$F_{\rm fin}\times F_{\rm fin}\cong\Z^{2p}/\Ld_Z$.
\end{lem}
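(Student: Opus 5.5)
The plan is to change coordinates by the unimodular matrix $R_Z$ from the normal form \eqref{eq:Z-normal}, so that both $\phi_F$ and the condition $Za\in\Z^{2p}$ become coordinatewise congruence conditions that can be compared directly.

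\textbf{Change of variables.} Put $b=R_Za$. Since $R_Z\in\GL_{2p}(\Z)$, the map $a\mapsto b$ is a bijection of $\Z^{2p}$, so it suffices to describe $\phi_F$ and $\Ld_Z$ in the variable $b$. Split $b=(b^{(1)},b^{(2)},b^{(3)})$ with $b^{(1)},b^{(2)}\in\Z^r$ and $b^{(3)}\in\Z^{2p-2r}$. By the definition of $E_r$,
\[
 \phi_F(a)=\bigl([M_rb^{(1)}],[b^{(2)}]\bigr),
\]
where the $j$-th coordinate of each block is reduced modulo $n_j$.

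\textbf{Surjectivity.} The second block $[b^{(2)}]$ already runs over all of $F_{\rm fin}$. In the first block, $\gcd(m_j,n_j)=1$ gives the relation $c_jm_j\equiv1 \pmod{n_j}$ with the integers $c_j$ fixed earlier. Hence multiplication by $m_j$ is a bijection of $\Z_{n_j}$, and $[M_rb^{(1)}]$ also runs over all of $F_{\rm fin}$. The two blocks can be chosen independently, and $b^{(3)}$ is irrelevant, so $\phi_F$ is surjective.

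\textbf{Kernel.} By the same invertibility of $m_j$ modulo $n_j$, we have $\phi_F(a)=0$ if and only if $n_j\mid b^{(1)}_j$ and $n_j\mid b^{(2)}_j$ for all $j$. On the other side, \eqref{eq:Z-normal} gives
\[
 Za=R_Z^t\cdot\tfrac1m
 \begin{pmatrix}0&H_r&0\\-H_r&0&0\\0&0&0\end{pmatrix}b.
\]
Since $R_Z^t$ is unimodular, $Za\in\Z^{2p}$ if and only if
$\bigl(\tfrac{H_r}{m}b^{(2)},-\tfrac{H_r}{m}b^{(1)},0\bigr)$ is integral. Using $h_j/m=m_j/n_j$ with $\gcd(m_j,n_j)=1$, this holds exactly when $n_j\mid b^{(1)}_j$ and $n_j\mid b^{(2)}_j$ for every $j$. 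The two conditions agree, so $\ker\phi_F=\Ld_Z$.

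\textbf{Conclusion.} The isomorphism $F_{\rm fin}\times F_{\rm fin}\cong\Z^{2p}/\Ld_Z$ then follows from the first isomorphism theorem. I do not expect any genuine obstacle here. The only point needing care is the bookkeeping in the normal form: the coordinates $b^{(1)}$ appear in the second block of $Nb$ and vice versa, and the trailing block $b^{(3)}$ imposes no condition on either side.
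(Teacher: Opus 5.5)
Your proposal is correct and follows essentially the same route as the paper. You change variables by $b=R_Za$, read off $\phi_F$ coordinatewise, use invertibility of $m_j$ modulo $n_j$ both for surjectivity and to turn $\phi_F(a)=0$ into $n_j\mid b^{(1)}_j,\ n_j\mid b^{(2)}_j$, and then match these conditions with integrality of $Za$ via the normal form \eqref{eq:Z-normal} and unimodularity of $R_Z^t$.
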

Under this identification, the finite bicharacter is
\begin{equation}\label{eq:rhoF-Z}
 \rh_F(\phi_F(a),\phi_F(a'))
 =\exp(2\pi i\,a^tZa').
\end{equation}
Since $W_1^tZW_1=Z$, we have $W_1\Ld_Z=\Ld_Z$
and an induced symplectic automorphism $Q_F$ defined by
\begin{equation}\label{eq:QF-phiF}
 Q_F\phi_F(a)=\phi_F(W_1a).
\end{equation}
Thus $Q_0=Q_{\R}\oplus Q_F$ is a symplectic
automorphism of $G_0$ satisfying
\begin{equation}\label{eq:Q0-tau}
 Q_0\ta(a)=\ta(W_1a).
\end{equation}
See Appendix~\ref{app:real-finite} for the verifications.

The relation $QT=TW$ requires the first two components of
$Q(T(a,b))$ to be $\ta(W_1a+W_2b)$ and $W_4b$.
Thus the term contributed by $W_2$ must be added to the
$G_0$-coordinate. Define $c:\Z^q\to G_0$ by
$c(b)=\ta(W_2b)$. This addition produces mixed terms in the
bicharacter, which we compensate for in the torus coordinate.
Since $c$
is a homomorphism and $\rh_0$ is a bicharacter, for each
$u\in G_0$ the map $z\mapsto\rh_0(Q_0u,c(z))$ is a
character of $\Z^q$. Under the standard identification
$\widehat{\Z^q}\cong\T^q$, every such character has the form
$z\mapsto\exp(2\pi i z^tt)$ for a unique $t\in\T^q$.
Thus there is a unique map $\ell:G_0\to\T^q$ satisfying
\begin{equation}\label{eq:ell-def}
 \exp(2\pi i z^t\ell(u))=\rh_0(Q_0u,c(z)),
 \qquad u\in G_0,\ z\in\Z^q.
\end{equation}
The homomorphism property of $Q_0$ and the bicharacter
identity in the first variable show that the characters
associated to $\ell(u+v)$ and $\ell(u)+\ell(v)$ agree;
uniqueness therefore gives $\ell(u+v)=\ell(u)+\ell(v)$.
Moreover, if $e_1,\ldots,e_q$ is the standard basis of
$\Z^q$, then
\[
 \exp(2\pi i\ell_j(u))=\rh_0(Q_0u,c(e_j)),\qquad 1\leq j\leq q.
\]
Each right-hand side is continuous in $u$. Since
$t\mapsto\exp(2\pi i t)$ identifies $\R/\Z$ homeomorphically
with the unit circle, every coordinate $\ell_j$ is continuous.
Hence $\ell$ is a continuous homomorphism.
Its defining identity
cancels the mixed pairings between $Q_0u$ and $c(b)$.
The remaining term in the torus coordinate is chosen so that
$QT=TW$ also holds in that coordinate.
Put
\[
 \Xi=W_4^t\te_{21}W_2+W_4^tT_{32}W_4-T_{32}
 \in\rm{M}_q(\R).
\]
Define $Q:G\to G$ by
\begin{equation}\label{eq:Q-def}
 Q(u,b,t)=\left(Q_0u+c(b),\,W_4b,\,
 (W_4^t)^{-1}(t+\ell(u)+\Xi b)\right).
\end{equation}
Here $(u,b,t)\in G_0\times\Z^q\times\T^q$, and all
operations in the last coordinate are taken in $\T^q$.
The factor $(W_4^t)^{-1}$ is the dual action corresponding to
$b\mapsto W_4b$. The identity
$\Xi-\Xi^t=-W_2^t\te_{11}W_2$ then cancels the pairings
between $c(b)$ and $c(d)$; the calculations establishing this
identity and $QT=TW$ are given in the appendix.

\begin{prp}\label{prp:Q symplectic}
The map $Q$ in~\eqref{eq:Q-def} is a continuous group
automorphism of $G$ and preserves the symplectic bicharacter:
\[
 \rh_G(Qx,Qy)=\rh_G(x,y),\qquad x,y\in G.
\]
\end{prp}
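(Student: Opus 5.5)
The proof splits into three parts: $Q$ is a continuous homomorphism, it is bijective, and it preserves $\rh_G$. Write $G=G_0\times\Z^q\times\T^q$ and use the explicit form~\eqref{eq:Q-def}.

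\emph{Homomorphism and continuity.} $Q_0$ is a continuous automorphism of $G_0$ (it is $Q_{\R}\oplus Q_F$). The map $c(b)=\ta(W_2b)$ is a homomorphism from the discrete group $\Z^q$. The map $\ell$ is a continuous homomorphism, as shown above. Finally, $b\mapsto[\Xi b]$ is a homomorphism $\Z^q\to\T^q$, and $(W_4^t)^{-1}$ acts as a continuous automorphism of $\T^q=\R^q/\Z^q$ because $W_4\in\GL_q(\Z)$. Each coordinate of $Q$ is therefore additive and continuous.

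\emph{Bijectivity.} I would write down an inverse explicitly. Given $(u',b',t')$, set $b=W_4^{-1}b'$ and $u=Q_0^{-1}(u'-c(b))$, then $t=W_4^tt'-\ell(u)-\Xi b$. This inverse is again built from continuous homomorphisms, so $Q$ is a topological automorphism. No open mapping theorem is needed.

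\emph{Symplecticity.} Write $x=(u,b,t)$ and $y=(v,d,s)$. In the coordinates of $G$, $\rh_G$ factors as
\[
\rh_G(x,y)=\rh_0(u,v)\,\exp\bigl(2\pi i(b^ts-d^tt)\bigr),
\]
because $J$ is block diagonal with the $\Z^q\times\T^q$ block $\begin{psmallmatrix}0&I_q\\-I_q&0\end{psmallmatrix}$. Expanding $\rh_0(Q_0u+c(b),Q_0v+c(d))$ by bilinearity gives four terms.
\begin{enumerate}
\item $\rh_0(Q_0u,Q_0v)=\rh_0(u,v)$, since $Q_0$ is symplectic.
\item The mixed terms $\rh_0(Q_0u,c(d))\rh_0(c(b),Q_0v)$ equal $\exp(2\pi i(d^t\ell(u)-b^t\ell(v)))$. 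This uses~\eqref{eq:ell-def} and the fact that $\rh_0$ is alternating, so $\rh_0(c(b),Q_0v)=\overline{\rh_0(Q_0v,c(b))}$.
\item The term $\rh_0(c(b),c(d))=\exp(2\pi i\,b^tW_2^t\te_{11}W_2d)$, by~\eqref{eq:rho0-tau}.
\end{enumerate}
The torus part of $\rh_G(Qx,Qy)$ is
\[
\exp\bigl(2\pi i[(W_4b)^t(W_4^t)^{-1}(s+\ell(v)+\Xi d)-(W_4d)^t(W_4^t)^{-1}(t+\ell(u)+\Xi b)]\bigr),
\]
which simplifies to
\[
\exp\bigl(2\pi i[b^ts-d^tt+b^t\ell(v)-d^t\ell(u)+b^t\Xi d-d^t\Xi b]\bigr).
\]
Its $\ell$-terms cancel exactly the mixed terms in item 2. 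What remains is $\exp(2\pi i\,b^t(\Xi-\Xi^t+W_2^t\te_{11}W_2)d)$, which equals $1$ once we know
\[
\Xi-\Xi^t=-W_2^t\te_{11}W_2\pmod{\text{integers}}.
\]
The identity itself suffices; integrality of the difference is all that is needed.

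\emph{Main obstacle.} The key step is this identity. We have $\Xi-\Xi^t=W_4^t\te_{21}W_2-W_2^t\te_{12}W_4+W_4^t\te_{22}W_4-\te_{22}$, using $T_{32}-T_{32}^t=\te_{22}$ and $\te_{12}=-\te_{21}^t$. I would compare this with the blocks of $W^t\te W=\te$ for the block-triangular $W$ of Proposition~\ref{prop:W-block}. The $(2,2)$ block of that relation reads
\[
W_2^t\te_{11}W_2+W_2^t\te_{12}W_4+W_4^t\te_{21}W_2+W_4^t\te_{22}W_4=\te_{22}.
\]
Substituting this into the expression for $\Xi-\Xi^t$ gives the identity; I expect careful sign bookkeeping here to be the only real difficulty.

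A secondary point also needs care. The factorization of $\rh_G$ and the formula $\rh_0=\rh_{\R}\cdot\rh_F$ must be independent of representatives in $H^*$. This follows from the remark after the definition of $J$.
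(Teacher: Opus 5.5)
Your proposal follows the paper's proof step for step. It uses the same explicit inverse and the same factorization $\rh_G(x,y)=\rh_0(u,v)\exp\bigl(2\pi i(b^ts-d^tt)\bigr)$. It cancels the $\ell$-terms against the mixed pairings via~\eqref{eq:ell-def}, and it reduces everything to $\Xi-\Xi^t=-W_2^t\te_{11}W_2$ using the $(2,2)$ block of $W^t\te W=\te$.

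The one thing to fix is the sign slip you anticipated, which is present in your formula for $\Xi-\Xi^t$. We have
\[
(W_4^t\te_{21}W_2)^t=W_2^t\te_{21}^tW_4=-W_2^t\te_{12}W_4,
\]
so subtracting $\Xi^t$ contributes $+W_2^t\te_{12}W_4$. The correct formula is therefore
\[
\Xi-\Xi^t=W_4^t\te_{21}W_2+W_2^t\te_{12}W_4+W_4^t\te_{22}W_4-\te_{22}.
\]

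With the minus sign you wrote, substituting the $(2,2)$ block identity would leave $-W_2^t\te_{11}W_2-2W_2^t\te_{12}W_4$. That is not integral in general, so the final cancellation would fail as written. A quick check catches this: $\Xi-\Xi^t$ must be skew-symmetric, and your expression is not.

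With the correct sign, the block identity gives $\Xi-\Xi^t=-W_2^t\te_{11}W_2$ exactly as real matrices, not merely modulo integers. This is precisely the paper's computation.
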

See Appendix~\ref{app:symplecticity} for the proof.

\begin{prp}\label{prop:QT}
The symplectic automorphism $Q$ satisfies $QT=TW$ as
maps $\Z^n\longrightarrow G$.
\end{prp}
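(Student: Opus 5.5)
We verify $QT(a,b)=T(W(a,b))$ coordinate by coordinate for $(a,b)\in\Z^{2p}\oplus\Z^q$. By Proposition~\ref{prop:W-block}, $W(a,b)=(W_1a+W_2b,\,W_4b)$. Hence by \eqref{eq:T-tau} the target is
\[
 T(W(a,b))=\bigl(\ta(W_1a+W_2b),\,W_4b,\,[\te_{21}(W_1a+W_2b)+T_{32}W_4b]\bigr).
\]
Applying \eqref{eq:Q-def} to $T(a,b)=(\ta(a),b,[\te_{21}a+T_{32}b])$ gives
\[
 Q(T(a,b))=\Bigl(Q_0\ta(a)+c(b),\,W_4b,\,(W_4^t)^{-1}\bigl(\te_{21}a+T_{32}b+\ell(\ta(a))+\Xi b\bigr)\Bigr).
\]
The first coordinate agrees by \eqref{eq:Q0-tau}, since $\ta$ is a homomorphism: $Q_0\ta(a)+c(b)=\ta(W_1a)+\ta(W_2b)=\ta(W_1a+W_2b)$. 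The second coordinate agrees trivially.

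For the torus coordinate, apply $W_4^t$ to both sides. Using the definition of $\Xi$, the identity to prove in $\T^q$ becomes
\[
 \te_{21}a+\ell(\ta(a))=W_4^t\te_{21}W_1a,
\]
because the $b$-terms $T_{32}b+\Xi b=W_4^t\te_{21}W_2b+W_4^tT_{32}W_4b$ match exactly. So the whole statement reduces to computing $\ell(\ta(a))$ modulo $\Z^q$.

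By \eqref{eq:ell-def}, \eqref{eq:Q0-tau} and \eqref{eq:rho0-tau}, for $z\in\Z^q$,
\[
 \exp(2\pi i z^t\ell(\ta(a)))=\rh_0(\ta(W_1a),\ta(W_2z))=\exp(2\pi i\,a^tW_1^t\te_{11}W_2z).
\]
Since characters of $\Z^q$ determine elements of $\T^q$, this gives $\ell(\ta(a))\equiv W_2^t\te_{11}^tW_1a=-W_2^t\te_{11}W_1a$ modulo $\Z^q$. It therefore remains to show the real identity
\[
 W_4^t\te_{21}W_1=\te_{21}-W_2^t\te_{11}W_1.
\]
This follows from the $(2,1)$ block of $W^t\te W=\te$ with $W$ block upper triangular. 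That block equals $W_2^t\te_{11}W_1+W_4^t\te_{21}W_1$, and it must equal $\te_{21}$.

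The only delicate point is keeping the sign conventions straight. One must check that the transpose of $\te_{11}$ introduces the minus sign, and that the identification $\widehat{\Z^q}\cong\T^q$ in \eqref{eq:ell-def} matches the torus coordinate of $T$. Both are fixed by \eqref{eq:rho0-tau} and the convention $\rh_G(z,w)=\exp(2\pi i z^tJw)$.
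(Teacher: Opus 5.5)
Your proof is correct and is essentially the paper's argument. You compare coordinates, use \eqref{eq:Q0-tau} for the $G_0$-coordinate, and for the torus coordinate combine the lattice formula $\ell(\ta(a))\equiv -W_2^t\te_{11}W_1a \pmod{\Z^q}$ with the $(2,1)$ block of $W^t\te W=\te$ and the definition of $\Xi$; the only cosmetic difference is that you multiply through by the automorphism $W_4^t$ of $\T^q$, whereas the paper verifies the identity in $\R^q$ and then applies the integral matrix $(W_4^t)^{-1}$.
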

See Appendix~\ref{app:embedding} for the proof.

With the embedding maps fixed, the construction defines
$Q_V$ for every $V$ in the subgroup
\[
 \mathcal G_{\te,Z}=\left\{
 V=\begin{pmatrix}A_V&B_V\\0&H_V\end{pmatrix}
 \in\GL_n(\Z):
 V^t\te V=\te,\ A_V^tZA_V=Z\right\}.
\]
In particular, $Q=Q_W$.
\begin{lem}\label{lem:Q-functorial}
For $V_1,V_2\in\mathcal G_{\te,Z}$, one has
$Q_{V_1}Q_{V_2}=Q_{V_1V_2}$.
\end{lem}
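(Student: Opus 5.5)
The construction of $Q_V$ is given blockwise by \eqref{eq:Q-def}, so I will compare $Q_{V_1}Q_{V_2}$ and $Q_{V_1V_2}$ one coordinate of $G=G_0\times\Z^q\times\T^q$ at a time. Write $V_i=\begin{pmatrix}A_i&B_i\\0&H_i\end{pmatrix}$. Then
\[
V_1V_2=\begin{pmatrix}A_1A_2&A_1B_2+B_1H_2\\0&H_1H_2\end{pmatrix}.
\]
The first step concerns the $G_0$-part $Q_{0,V}=Q_{\R,V}\oplus Q_{F,V}$. On the real factor, $Q_{\R,V}=T_{11}A_VT_{11}^{-1}$ is plainly multiplicative. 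On the finite factor, \eqref{eq:QF-phiF} gives $Q_{F,V_1}Q_{F,V_2}\phi_F(a)=\phi_F(A_1A_2a)$. Since $\phi_F$ is surjective by Lemma~\ref{lem:finite-quotient}, this shows $Q_{0,V_1}Q_{0,V_2}=Q_{0,V_1V_2}$.

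Next I compose the formulas for $(u,b,t)$. The $\Z^q$ coordinate gives $H_1H_2b$, as required. The $G_0$ coordinate is
\[
Q_{0,1}(Q_{0,2}u+\ta(B_2b))+\ta(B_1H_2b)=Q_{0,V_1V_2}u+\ta(A_1B_2b+B_1H_2b),
\]
where I use \eqref{eq:Q0-tau} and the additivity of $\ta$. This equals $Q_{0,V_1V_2}u+c_{V_1V_2}(b)$.

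The torus coordinate is where the real work lies, and I expect it to be the main obstacle. Composing, the last coordinate is
\[
(H_1^t)^{-1}\Bigl((H_2^t)^{-1}(t+\ell_2(u)+\Xi_2b)+\ell_1\bigl(Q_{0,2}u+c_2(b)\bigr)+\Xi_1H_2b\Bigr).
\]
It must match $(H_2^t)^{-1}(H_1^t)^{-1}\bigl(t+\ell_{12}(u)+\Xi_{12}b\bigr)$, where $(H_1H_2)^{t,-1}=(H_1^t)^{-1}(H_2^t)^{-1}$; I would double-check this ordering carefully. Since $\ell_1$ is a homomorphism, the comparison splits into a part linear in $u$ and a part linear in $b$.

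For the $u$-part I pair both sides against $z\in\Z^q$ via $\exp(2\pi i z^t\,\cdot)$ and use the defining identity \eqref{eq:ell-def}. The relation $\exp(2\pi i z^t(H^t)^{-1}s)=\exp(2\pi i (H^{-1}z)^ts)$ turns every term into a value of $\rh_0$. The required identity then reduces to the statement that $\rh_0(Q_{0,12}u,\ta(B_{12}z))$ equals the appropriate product of pairings. This follows from the symplecticity of $Q_{0,1}$ together with $c_{12}=Q_{0,1}c_2+c_1H_2$ from the previous step.

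The $b$-part has two pieces. The first is an identity among the real matrices $\Xi$, which I check directly from the definition $\Xi_V=H_V^t\te_{21}B_V+H_V^tT_{32}H_V-T_{32}$. The second is the cross term $\ell_1(c_2(b))$, which by \eqref{eq:ell-def} equals a pairing $\rh_0(\ta(A_1B_2b),\ta(B_1\cdot))$. By \eqref{eq:rho0-tau}, this is $\exp(2\pi i\,(\cdot)^tB_1^t\te_{11}A_1B_2b)$.

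I then have to show that the discrepancy is integral, so that it vanishes in $\T^q$. This is exactly where the invariance $V_1^t\te V_1=\te$ enters. Written in blocks it gives $A_1^t\te_{11}A_1=\te_{11}$ and $A_1^t\te_{11}B_1+A_1^t\te_{12}H_1=\te_{12}$, and so on, and these absorb the remaining $\te_{21}$ terms. The identities $\Xi-\Xi^t=-W_2^t\te_{11}W_2$ used in the appendix suggest that the same block manipulation closes the computation.
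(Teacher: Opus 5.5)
Your route is the same as the paper's: multiplicativity of $Q_0$ via surjectivity of $\phi_F$, the identity $c_{12}=Q_{0,1}c_2+c_1H_2$, the formula $\ell_{12}(u)=\ell_2(u)+H_2^t\ell_1(Q_{0,2}u)$ obtained by pairing with $z\in\Z^q$ and using symplecticity of $Q_{0,1}$, and a separate treatment of the $b$-terms. Two slips in the torus coordinate must be fixed, however, and the second one would make your last step fail as written.

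First, the target is $(H_1^t)^{-1}(H_2^t)^{-1}(t+\ell_{12}(u)+\Xi_{12}b)$, as your parenthetical says, not $(H_2^t)^{-1}(H_1^t)^{-1}(\cdots)$. With the correct order, factoring $(H_1^t)^{-1}(H_2^t)^{-1}$ out of the composite leaves the bracket
$t+\ell_2(u)+\Xi_2b+H_2^t\ell_1(Q_{0,2}u)+H_2^t\ell_1(c_2(b))+H_2^t\Xi_1H_2b$, and this is what must be compared with $t+\ell_{12}(u)+\Xi_{12}b$.

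Second, the cross term has the opposite sign to the one you wrote. By \eqref{eq:rho0-tau},
$\rh_0(\ta(A_1B_2b),\ta(B_1z))=\exp(2\pi i\,b^tB_2^tA_1^t\te_{11}B_1z)$. Rewriting this in the form $z^t(\cdots)b$ uses $\te_{11}^t=-\te_{11}$, so
$\ell_1(c_2(b))\equiv-B_1^t\te_{11}A_1B_2b\pmod{\Z^q}$, which is \eqref{eq:ell-lattice} applied to $V_1$. With your sign, a residual $2H_2^tB_1^t\te_{11}A_1B_2b$ survives. It need not lie in $\Z^q$, so no block identity can absorb it.

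With the correct sign, no integrality argument is needed. Expand the definition of $\Xi$ using $B_{12}=A_1B_2+B_1H_2$ and $H_{12}=H_1H_2$. The $T_{32}$ terms cancel identically, and you get the exact real identity
\[
\Xi_{12}-\Xi_2-H_2^t\Xi_1H_2=H_2^t\bigl(H_1^t\te_{21}A_1-\te_{21}\bigr)B_2 .
\]
The $(2,1)$ block of $V_1^t\te V_1=\te$ reads $B_1^t\te_{11}A_1+H_1^t\te_{21}A_1=\te_{21}$. It turns the right side into $-H_2^tB_1^t\te_{11}A_1B_2$, which matches $H_2^t\ell_1(c_2(b))$ modulo $\Z^q$. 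Your $(1,2)$-block identity is its negative transpose, so it would serve equally well.

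In this $b$-part, neither $A_1^t\te_{11}A_1=\te_{11}$ nor anything like $\Xi-\Xi^t=-W_2^t\te_{11}W_2$ is used; those enter only through the symplecticity of $Q_{0,1}$ in the $u$-part and through Proposition~\ref{prp:Q symplectic}, respectively.
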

See Appendix~\ref{app:composition} for the proof.
\begin{cor}\label{cor:Q-order}
The symplectic automorphism $Q$ has the same order as
$W$. In particular, $Q^k=\Id_G$.
\end{cor}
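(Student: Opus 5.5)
The order statement will follow from Lemma~\ref{lem:Q-functorial} once we know the assignment $V\mapsto Q_V$ is an injective homomorphism on the cyclic group generated by $W$. First I would check that every power $W^j$, $j\geq 0$, lies in $\mathcal G_{\te,Z}$. This is immediate from Proposition~\ref{prop:W-block}. Block upper-triangular integral matrices are closed under multiplication, so $W^j$ has upper-left block $W_1^j$ and lower-right block $W_4^j$. The relations $(W^j)^t\te W^j=\te$ and $(W_1^j)^tZW_1^j=Z$ follow by induction from the corresponding relations for $W$. Since $W^{-1}=W^{k-1}$, the set $\{W^j\}$ is a subgroup of $\mathcal G_{\te,Z}$. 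Iterating Lemma~\ref{lem:Q-functorial} then gives $Q^j=Q_W^j=Q_{W^j}$ for all $j\geq 1$.

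Next I would compute $Q_{I_n}$ directly from~\eqref{eq:Q-def}. For $V=I_n$ the data are $W_1=I_{2p}$, $W_2=0$ and $W_4=I_q$, so:
\begin{itemize}
\item $Q_{\R}=\Id$, and $Q_F=\Id$ by~\eqref{eq:QF-phiF} together with the surjectivity of $\phi_F$ (Lemma~\ref{lem:finite-quotient});
\item $c=0$, hence $\ell=0$ by uniqueness in~\eqref{eq:ell-def};
\item $\Xi=T_{32}-T_{32}=0$.
\end{itemize}
Therefore $Q_{I_n}=\Id_G$, and in particular $Q^k=Q_{W^k}=Q_{I_n}=\Id_G$. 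It follows that the order of $Q$ divides the order of $W$.

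For the reverse divisibility, suppose $Q^j=\Id_G$. Proposition~\ref{prop:QT} gives $Q^jT=TW^j$ on $\Z^n$, so $T(W^jx)=T(x)$ in $G$ for all $x\in\Z^n$. I then need $T:\Z^n\to G$ to be injective. This holds because $\tilde T=\gamma\circ T$ is an invertible real linear map, and $\gamma$ factors through $\pi_G$ on the relevant subgroup: the first three blocks $\R^p\times\R^{*p}\times\Z^q$ are not reduced by $\pi_G$. Hence $T(x)=T(y)$ in $G$ forces $\tilde Tx=\tilde Ty$, so $x=y$. This gives $W^jx=x$ for all $x$, i.e.\ $W^j=I_n$. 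Thus the orders of $Q$ and $W$ coincide.

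The only delicate point is the injectivity of $T$ on $\Z^n$. The finite and torus coordinates are quotients and lose information, so the argument must pass through the unreduced components captured by $\gamma$. Everything else is bookkeeping with Lemma~\ref{lem:Q-functorial}.
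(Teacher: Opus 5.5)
Your proposal is correct and follows the paper's proof. One direction uses $Q_{I_n}=\Id_G$ together with Lemma~\ref{lem:Q-functorial} to get $Q^j=Q_{W^j}$, and the converse uses $Q^j=\Id_G$ with $QT=TW$ and injectivity of $T$ on $\Z^n$ to force $W^j=I_n$. You add details the paper leaves implicit: that the powers of $W$ lie in $\mathcal G_{\te,Z}$, the explicit computation of $Q_{I_n}$, and injectivity via $\tilde T=\gamma\circ T$, which is the same observation as the paper's appeal to \eqref{eq:T-tau} and invertibility of $T_{11}$.
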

\begin{proof}
The construction gives $Q_{I_n}=\Id_G$, and
Lemma~\ref{lem:Q-functorial} gives $Q^j=Q_{W^j}$.
Thus $W^j=I_n$ implies $Q^j=\Id_G$. Conversely,
$Q^j=\Id_G$ and $QT=TW$ imply $TW^j=T$.
The map $T$ is injective by~\eqref{eq:T-tau}, since
$T_{11}$ is invertible, so $W^j=I_n$.
\end{proof}

\section{The equivariant imprimitivity bimodule}\label{sec:equivariant-bimodule}

In this section we construct the bimodule in Theorem~\ref{thm:main} by giving Li's
bimodule an action of $\Z_k$ compatible with the prescribed
actions on the two tori. The action must respect both module
structures and both inner products. By the crossed-product
imprimitivity theorem, proved independently by
Combes~\cite{Combes1984} and Curto--Muhly--Williams
\cite{CMW1984}*{Theorem~1}, an equivariant imprimitivity bimodule
then gives a Morita equivalence between the corresponding
crossed products. We describe the construction here and collect
the proofs in Appendix~\ref{app:symplectic-computations}.

We first assume that $g$ has the special form of
Lemma~\ref{special from g} and keep the hypotheses and notation
of Section~\ref{sec:symplectic-construction}. Thus
\[
 g=\rho(R_0)\nu(K)g',\qquad
 \te'=g'\te,\qquad \Th=\te'+K,\qquad P_1=R_0^{-1}PR_0.
\]
Recall Li's $A_{\te'}$--$A_\te$ bimodule $E'$ from Section~\ref{Prelim},
whose left action and inner product were transported through
$\Ps_K:A_{\te'}\to A_\Th$ in Section~\ref{sec:symplectic-construction}. We shall adjust $\Ps_K$
by an automorphism that multiplies each generator by a character value,
so that one operator implements both prescribed actions. We first
obtain exact covariance of the right action, then normalize the
operator to have $k$-th power equal to the identity, and finally
adjust the left action and verify covariance of the inner products.

More precisely, we seek a unitary $\mathcal U_0$ on $L^2(M)$
preserving $\mathcal S(M)$ such that
\[
 \mathcal U_0(b\xi a)
 =\af_{P_1^t}(b)\mathcal U_0(\xi)\af_W(a),
 \qquad \mathcal U_0^k=I,
\]
for $b\in A_\Th^\infty$, $a\in A_\te^\infty$ and
$\xi\in\mathcal S(M)$. Covariance of the inner products will
allow us to extend this action to the completed bimodule. We
begin with the unitary supplied by
Lemma~\ref{lem:unitary implementation} and consider its effect
on the right action.

By Proposition~\ref{prp:Q symplectic},
Proposition~\ref{prop:QT} and Corollary~\ref{cor:Q-order}, the
automorphism $Q$ satisfies
\[
 \rho_G(Qz,Qw)=\rho_G(z,w),\qquad QT=TW,\qquad Q^k=\Id_G.
\]
Lemma~\ref{lem:unitary implementation} therefore gives a unitary
$V_Q$ on $L^2(M)$ such that
\[
 V_Q\pi(z)V_Q^*=\eta_Q(z)\pi(Qz),\qquad z\in G,
\]
where $\eta_Q:G\to\T$ is continuous and
$V_Q\mathcal S(M)=\mathcal S(M)$. Since right multiplication by
$U_x^\te$ is given by $R_x=c_T(x)\pi(Tx)$, the relation $QT=TW$
gives
\[
 V_QR_xV_Q^*=\ch_R(x)R_{Wx},\qquad
 \ch_R(x)=c_T(x)\eta_Q(Tx)c_T(Wx)^{-1}.
\]
The relation $R_yR_x=\sm_\te(x,y)R_{x+y}$ and the equality
$\sm_\te(Wx,Wy)=\sm_\te(x,y)$ give, after conjugation by $V_Q$,
\[
 \ch_R(x+y)=\ch_R(x)\ch_R(y),\qquad x,y\in\Z^n.
\]
Thus $\ch_R$ is a character. To make $V_Q$ implement the right
action exactly, we shall compose it on the left with a
Heisenberg--Weyl operator $\pi(z_0)$ for some $z_0\in G$. Conjugation
by this operator must cancel $\ch_R(x)$ at $TWx$. This leads us
to define a character on $D=T(\Z^n)$ by
\[
 \ps(Ty)=\overline{\ch_R(W^{-1}y)},\qquad y\in\Z^n.
\]
The formula is well defined because $T$ is injective, and the
character is continuous because $D$ is discrete. As $D$ is
closed in $G$, Lemma~\ref{lem:character-extension} allows us to
extend $\ps$ to $G$. We use the same notation for a chosen
extension. By Lemma~\ref{lem:rhoG-self-duality}, there is a unique
$z_0\in G$ such that
\[
 \ps(z)=\overline{\rh_G(z_0,z)},\qquad z\in G.
\]
In particular,
$\overline{\rh_G(z_0,TWx)}=\overline{\ch_R(x)}$ for every
$x\in\Z^n$. Put $\mathcal U=\pi(z_0)V_Q$. Both factors preserve
$\mathcal S(M)$, so $\mathcal U\mathcal S(M)=\mathcal S(M)$.

\begin{prp}\label{prop:right-exact}
For every $x\in\Z^n$,
\begin{equation}\label{U covariance}
 \mathcal U R_x\mathcal U^*=R_{Wx}.
\end{equation}
Equivalently,
\[
 \mathcal U(\xi\cdot a)
 =(\mathcal U\xi)\cdot\af_W(a),
 \qquad \xi\in\mathcal S(M),\quad a\in A_\te^\infty.
\]
\end{prp}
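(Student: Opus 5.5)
The plan is a direct computation with the commutation relations of the Heisenberg--Weyl operators. Since $\mathcal U=\pi(z_0)V_Q$, we have $\mathcal U^*=V_Q^*\pi(z_0)^*$, and therefore
\[
 \mathcal U R_x\mathcal U^*=\pi(z_0)\bigl(V_QR_xV_Q^*\bigr)\pi(z_0)^*
 =\ch_R(x)\,\pi(z_0)R_{Wx}\pi(z_0)^*,
\]
using the relation $V_QR_xV_Q^*=\ch_R(x)R_{Wx}$ established just before the statement; that relation came from $QT=TW$ (Proposition~\ref{prop:QT}) together with Lemma~\ref{lem:unitary implementation}. It then remains to compute the conjugation of $R_{Wx}=c_T(Wx)\pi(TWx)$ by $\pi(z_0)$.

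For the conjugation, the commutation relation $\pi(z)\pi(w)=\overline{\rh_G(z,w)}\pi(w)\pi(z)$ gives
\[
 \pi(z_0)\pi(w)\pi(z_0)^*=\overline{\rh_G(z_0,w)}\,\pi(w),
\]
and the scalar $c_T(Wx)$ passes through unchanged. Taking $w=TWx$ and using the defining property of $z_0$, namely $\overline{\rh_G(z_0,TWx)}=\ps(TWx)=\overline{\ch_R(W^{-1}Wx)}=\overline{\ch_R(x)}$, we obtain
\[
 \mathcal U R_x\mathcal U^*=\ch_R(x)\overline{\ch_R(x)}R_{Wx}=R_{Wx},
\]
since $\ch_R(x)\in\T$. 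The only place requiring care is the sign and conjugation conventions in the commutation relation, i.e.\ whether conjugation by $\pi(z_0)$ yields $\rh_G(z_0,w)$ or its conjugate. I would verify this directly from $\pi(z)\pi(w)=\overline{\bt(z,w)}\pi(z+w)$: applied in both orders it gives $\pi(z)\pi(w)\pi(z)^{-1}=\overline{\bt(z,w)}\bt(w,z)\pi(w)=\overline{\rh_G(z,w)}\pi(w)$. This is consistent with the choice $\ps(z)=\overline{\rh_G(z_0,z)}$ made in the construction; had the convention been the opposite, one would replace $z_0$ by $-z_0$. This bookkeeping is the main (minor) obstacle.

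For the equivalent module formulation, note that $\xi\cdot U_x^\te=R_x\xi$, so \eqref{U covariance} gives $\mathcal U(\xi\cdot U_x^\te)=R_{Wx}\mathcal U\xi=(\mathcal U\xi)\cdot U_{Wx}^\te=(\mathcal U\xi)\cdot\af_W(U_x^\te)$. The identity extends by linearity to finite sums. For a general $a=\sum_x a_xU_x^\te\in A_\te^\infty$ with rapidly decreasing coefficients, the series $\sum_x a_xR_x\xi$ converges in $L^2(M)$, since each $R_x$ is unitary and the coefficients are summable. Because $\mathcal U$ is bounded, it commutes with the sum, and since $\af_W$ merely permutes the coefficients, $\sum_x a_xR_{Wx}\mathcal U\xi=(\mathcal U\xi)\cdot\af_W(a)$. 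Finally, $\mathcal U\xi\in\mathcal S(M)$ because $\mathcal U$ preserves the Schwartz space, so both sides are defined within $E^\infty$.
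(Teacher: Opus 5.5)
Your proposal is correct and matches the paper's proof: you conjugate by $V_Q$ to get $\chi_R(x)R_{Wx}$, then conjugate by $\pi(z_0)$ to pick up the factor $\overline{\rho_G(z_0,TWx)}=\overline{\chi_R(x)}$, which cancels it. The extension from the generators $U_x^\theta$ to $A_\theta^\infty$ via absolutely convergent Fourier series is also the same, and your check of the sign convention in the commutation relation agrees with the one the paper uses.
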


See Appendix~\ref{app:right-exact} for the proof.

To obtain an action of $\Z_k$ on the bimodule $\mathcal{S}(M)$, we need the $k$-th power of the
implementing operator to be the identity. It will be enough to
show that $\mathcal U^k=\ld I$ for some $\ld\in\T$.
Once this is established, choose $\tilde\ld\in\T$ with $\tilde\ld^k=\ld^{-1}$
and rescale $\mathcal U$ by setting $\mathcal U_0=\tilde\ld\mathcal U$.
Then $\mathcal U_0^k=\tilde\ld^k\mathcal U^k=I$.
Since $|\tilde\ld|=1$, this rescaling preserves unitarity and
leaves conjugation unchanged:
$\mathcal U_0R_x\mathcal U_0^*=\mathcal U R_x\mathcal U^*$.
To prove that $\mathcal U^k$ is scalar, first note that
equation~\eqref{U covariance} and $W^k=I_n$ show that $\mathcal U^k$ commutes with
every $R_x$.
This only gives commutation with the operators
associated to $D=T(\Z^n)$, whereas irreducibility applies to
the full Heisenberg--Weyl representation of $G$. We therefore
first identify the possible extra term in $\mathcal U^k$ and
then use the dual lattice $D^\perp=S(\Z^n)$ to remove it.
For this purpose, we determine how $Q$ acts on $D^\perp$.

Since $Q(D)=D$ and $Q$ preserves $\rho_G$, it preserves
$D^\perp$ as well. In fact, if $s\in D^\perp$ and $d=Qd'\in D$,
then $\rho_G(Qs,d)=\rho_G(s,d')=1$; applying the same argument to
$Q^{-1}$ gives equality. Since $S$ identifies $\Z^n$ with
$D^\perp$, there is an $L\in\GL_n(\Z)$ such that $QS=SL$.
To find $L$, recall the projections of the embedding maps from
Section~\ref{Prelim}.

The projection $\gm$ discards the torus and finite coordinates,
so it is well defined on $G$, with $\tilde T=\gm T$ and
$\tilde S=\gm S$. Formula~\eqref{eq:Q-def} gives
\[
 \widetilde Q=
 \begin{pmatrix}
 T_{11}W_1T_{11}^{-1}&T_{11}W_2\\
 0&W_4
 \end{pmatrix}
 =\tilde T W\tilde T^{-1},
 \qquad \gm Q=\widetilde Q\gm.
\]
Projecting $QS=SL$ and using $\tilde S=-\tilde T\A$, we obtain
\[
 \tilde T W\A=\tilde T\A L.
\]
On the other hand, Lemma~\ref{W and A compatibility} gives
$W\A=\A P_1^t$. Since $\tilde T$ and $\A$ are invertible,
$L=P_1^t$. Consequently,
\begin{equation}\label{eq: QS}
 QS=SP_1^t.
\end{equation}

The operator $\mathcal U=\pi(z_0)V_Q$ still implements $Q$ up to
a continuous function $\eta_{\mathcal U}:G\to\T$:
\[
 \mathcal U\pi(z)\mathcal U^*
 =\eta_{\mathcal U}(z)\pi(Qz),\qquad z\in G.
\]
Explicitly, $\eta_{\mathcal U}(z)=\eta_Q(z)\overline{\rh_G(z_0,Qz)}$.
Iterating this identity and using $Q^k=\Id_G$, we get
\begin{equation}\label{eq:Uk-Weyl-covariance}
 \mathcal U^k\pi(z)\mathcal U^{-k}=\ch_k(z)\pi(z),
 \qquad
 \ch_k(z)=\prod_{j=0}^{k-1}\eta_{\mathcal U}(Q^jz).
\end{equation}
The following lemma shows that the extra term in $\mathcal U^k$
comes from a point of $D^\perp$ fixed by $Q$.

\begin{lem}\label{lem:Uk-obstruction}
There exist $\ld\in\T$ and $v\in D^\perp$ such that
\[
 \mathcal U^k=\ld\pi(v),\qquad Qv=v.
\]
\end{lem}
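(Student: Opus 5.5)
By \eqref{eq:Uk-Weyl-covariance}, $\mathcal U^k$ conjugates each $\pi(z)$ to $\ch_k(z)\pi(z)$. The plan is to show that $\ch_k$ is a continuous character of $G$, represent it through $\rho_G$ by a point $v$, and then use the Stone--von Neumann irreducibility to identify $\mathcal U^k$ with a scalar multiple of $\pi(v)$. The remaining work is to locate $v$ in $D^\perp$ and show that $Q$ fixes it.

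\emph{Step 1: $\ch_k$ is a character.} Conjugating $\pi(z)\pi(w)=\overline{\beta(z,w)}\pi(z+w)$ by $\mathcal U^k$ gives $\ch_k(z)\ch_k(w)\pi(z)\pi(w)=\overline{\beta(z,w)}\ch_k(z+w)\pi(z+w)$. Since $\pi(z+w)\neq0$, this yields $\ch_k(z+w)=\ch_k(z)\ch_k(w)$. Continuity holds because $\ch_k$ is a finite product of the continuous functions $\eta_{\mathcal U}\circ Q^j$. By Lemma~\ref{lem:rhoG-self-duality} there is a unique $v\in G$ with $\ch_k(z)=\rho_G(v,z)$ for all $z$, possibly after replacing $v$ by $-v$ to match the convention. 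I will fix the sign using the commutation relation $\pi(v)\pi(z)\pi(v)^*=\overline{\rho_G(v,z)}\pi(z)$, so that $\pi(v)$ conjugates $\pi(z)$ to $\ch_k(z)\pi(z)$.

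\emph{Step 2: $\mathcal U^k$ is a scalar times $\pi(v)$.} By Step 1, the operator $\pi(v)^*\mathcal U^k$ commutes with every $\pi(z)$, $z\in G$. The Heisenberg--Weyl representation is irreducible (Stone--von Neumann), so $\pi(v)^*\mathcal U^k=\ld I$ for some scalar $\ld$. Since both $\pi(v)$ and $\mathcal U^k$ are unitary, $\ld\in\T$, and hence $\mathcal U^k=\ld\pi(v)$.

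\emph{Step 3: $v\in D^\perp$.} By \eqref{U covariance} and $W^k=I_n$, we have $\mathcal U^kR_x\mathcal U^{-k}=R_x$. Since $R_x=c_T(x)\pi(Tx)$, this says $\ch_k(Tx)=1$, that is, $\rho_G(v,Tx)=1$ for all $x\in\Z^n$. This is exactly the statement $v\in D^\perp$.

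\emph{Step 4: $Qv=v$.} Since $\mathcal U$ commutes with $\mathcal U^k=\ld\pi(v)$, it commutes with $\pi(v)$. On the other hand, covariance gives $\mathcal U\pi(v)\mathcal U^*=\eta_{\mathcal U}(v)\pi(Qv)$. Combining the two, $\pi(v)=\eta_{\mathcal U}(v)\pi(Qv)$. Multiplying by $\pi(Qv)^*$ and using the cocycle relation, $\pi(v)\pi(Qv)^*=\pi(v)\pi(-Qv)\cdot(\text{phase})$, which is a phase times $\pi(v-Qv)$. This operator is a scalar, so Lemma~\ref{lem:scalar-operator} gives $v-Qv=0$.

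The main obstacle is Step 2, where one must invoke irreducibility of $\pi$ in the Stone--von Neumann form cited in the preliminaries; everything else is cocycle bookkeeping. The only other point needing care is the sign convention relating $\rho_G$ to commutators of $\pi$, which is checked against $\pi(z)\pi(w)=\overline{\rho_G(z,w)}\pi(w)\pi(z)$.
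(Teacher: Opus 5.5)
Your proposal is correct and follows the paper's proof essentially step for step. You show that $\ch_k$ is a continuous character, represent it as $\overline{\rho_G(v,\cdot)}$ via Lemma~\ref{lem:rhoG-self-duality}, use irreducibility and Schur's lemma to get $\mathcal U^k=\ld\pi(v)$, deduce $v\in D^\perp$ from $\mathcal U^kR_x\mathcal U^{-k}=R_x$, and obtain $Qv=v$ by applying Lemma~\ref{lem:scalar-operator} to the scalar operator $\pi(v)\pi(Qv)^*$. The sign ambiguity you flag in Step 1 is harmless: $\rho_G(-v,z)=\overline{\rho_G(v,z)}$, so the condition $\rho_G(v,Tx)=1$ does not depend on the choice of sign.
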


See Appendix~\ref{app:Uk-obstruction} for the proof.

The hypothesis $1\notin\operatorname{spec}(W)$ now enters the
argument. 
Write the point in the preceding lemma as $v=S(m)$,
where $m\in\Z^n$. By~\eqref{eq: QS}, the equality $Qv=v$ gives
$S(P_1^tm)=S(m)$, and injectivity of $S$ gives $P_1^tm=m$.
Since $P_1^t=\A^{-1}W\A$, the matrix $P_1^t-I_n$ is
invertible. Thus $m=0$, so $v=0$ and $\mathcal U^k=\ld I$.
We may therefore apply the scalar normalization described above.

\begin{prp}\label{prp:modified U}
There exists $\tilde\ld\in\T$ such that
$\mathcal U_0=\tilde\ld\mathcal U$ is unitary on $L^2(M)$,
preserves $\mathcal S(M)$, and satisfies
\[
 \mathcal U_0^k=I,\qquad
 \mathcal U_0R_x\mathcal U_0^*=R_{Wx},\qquad x\in\Z^n.
\]
\end{prp}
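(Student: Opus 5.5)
This is mostly an assembly of results already established, so the plan is to collect them and perform the scalar normalization described just before the statement. Let $\mathcal U=\pi(z_0)V_Q$ be as constructed above.

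\textbf{Step 1: properties of $\mathcal U$.} Unitarity holds because $\pi(z_0)$ is a Heisenberg--Weyl operator and $V_Q$ is the unitary supplied by Lemma~\ref{lem:unitary implementation}. The identity $\mathcal U\mathcal S(M)=\mathcal S(M)$ holds because both factors preserve the Schwartz space. For $\pi(z_0)$ this is immediate from the formula $(\pi(m,\chi)f)(r)=\langle r,\chi\rangle f(r-m)$: translation and multiplication by a character both preserve the decay and smoothness conditions defining $\mathcal S(M)$. Proposition~\ref{prop:right-exact} gives $\mathcal U R_x\mathcal U^*=R_{Wx}$ for all $x\in\Z^n$.

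\textbf{Step 2: $\mathcal U^k$ is scalar.} Lemma~\ref{lem:Uk-obstruction} gives $\mathcal U^k=\ld\pi(v)$ with $v\in D^\perp$ and $Qv=v$. Write $v=S(m)$ with $m\in\Z^n$. By \eqref{eq: QS}, $S(P_1^tm)=S(m)$. The map $S$ is injective on $\Z^n$, since $S(\Z^n)=D^\perp$ is identified with $\Z^n$ (equivalently, $\tilde S=-\tilde T\A$ is invertible). Hence $P_1^tm=m$.

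Lemma~\ref{W and A compatibility} gives $P_1^t=\A^{-1}W\A$, so $P_1^t-I_n=\A^{-1}(W-I_n)\A$. This matrix is invertible because $1\notin\operatorname{spec}(W)$. Therefore $m=0$, so $v=0$ and $\mathcal U^k=\ld I$ with $\ld\in\T$. The only point requiring care is that $\pi(0)=I$, which follows from the definition of $\pi$.

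\textbf{Step 3: normalization.} Choose $\tilde\ld\in\T$ with $\tilde\ld^k=\ld^{-1}$, which is possible by taking a $k$-th root on the circle, and set $\mathcal U_0=\tilde\ld\mathcal U$. Then $\mathcal U_0$ is unitary and preserves $\mathcal S(M)$, since scalar multiplication does both. We get $\mathcal U_0^k=\tilde\ld^k\ld I=I$. Finally, $\mathcal U_0R_x\mathcal U_0^*=|\tilde\ld|^2\,\mathcal U R_x\mathcal U^*=R_{Wx}$.

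\textbf{Main difficulty.} The substantive work is contained in Lemma~\ref{lem:Uk-obstruction}, which uses irreducibility together with Lemma~\ref{lem:scalar-operator}, and in the identification $QS=SP_1^t$. With those available, the proposition reduces to the spectral argument of Step 2. The remaining checks are routine.
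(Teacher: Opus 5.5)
Your proposal is correct and follows the paper's proof essentially step for step. Lemma~\ref{lem:Uk-obstruction} gives $\mathcal U^k=\lambda\pi(v)$ with $v=S(m)$ fixed by $Q$, then $QS=SP_1^t$ and injectivity of $S$ give $P_1^tm=m$, then $P_1^t=\A^{-1}W\A$ together with $1\notin\operatorname{spec}(W)$ forces $m=0$, and finally you rescale by a $k$-th root of $\lambda^{-1}$; your extra justifications (that $\pi(z_0)$ preserves $\mathcal S(M)$, and that $S$ is injective because $\tilde S=-\tilde T\A$ is invertible) only spell out what the paper leaves implicit.
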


See Appendix~\ref{app:modified-U} for the proof.

Fix $\mathcal U_0$ as in Proposition~\ref{prp:modified U}.
Since it is a scalar multiple of $\mathcal U$, it also satisfies
$\mathcal U_0\pi(z)\mathcal U_0^*=\eta_{\mathcal U}(z)\pi(Qz)$.
We have now obtained the required cyclic action on the right
$A_\te$-module. To make the bimodule equivariant, the same
operator must also agree with the action of $P_1^t$ on
$A_\Th$. We shall achieve this by adjusting $\Ps_K$, keeping
$\mathcal U_0$ fixed. Recall that $L_x$ denotes the original
left action of $U_x^{\te'}$, before transport to $A_\Th$.

Since $L_x=c_S(x)\pi(-Sx)$, the covariance of $\mathcal U_0$
and~\eqref{eq: QS} give scalars $\ch_L(x)\in\T$ such that
\begin{equation}\label{eq:U_0 and L_x}
 \mathcal U_0L_x\mathcal U_0^*=\ch_L(x)L_{P_1^tx}.
\end{equation}
The calculation that made $\ch_R$ a character does not apply
directly to $\ch_L$, because the original left parameter is
$\te'$. Conjugating
$L_xL_y=\sm_{\te'}(x,y)L_{x+y}$ gives
\[
 \frac{\ch_L(x)\ch_L(y)}{\ch_L(x+y)}
 =\frac{\sm_{\te'}(x,y)}{\sm_{\te'}(P_1^tx,P_1^ty)}.
\]
Recall that $P_1^t$ preserves $\Th=\te'+K$. Substituting this
expression into $P_1\Th P_1^t=\Th$ gives
\[
 P_1\te'P_1^t-\te'=K-P_1KP_1^t.
\]
Thus $P_1^t$ need not preserve the multiplier for $\te'$.
We instead express the left action using the Weyl generators
of $A_\Th$, whose multiplier is preserved by $P_1^t$.

Recall from~\eqref{K isomorphism} that
$\Ps_K(U_x^{\te'})=\ep_K(x)U_x^\Th$, where
$\ep_K(x)\in\{1,-1\}$. On the transported module, left
multiplication by $U_x^\Th$ is therefore represented by
\[
 \widetilde L_x=\ep_K(x)L_x.
\]
These operators satisfy
$\widetilde L_x\widetilde L_y=\sm_\Th(x,y)\widetilde L_{x+y}$.
Equation~\eqref{eq:U_0 and L_x} becomes
\begin{equation}\label{eq:U_0 and L tilde}
 \begin{aligned}
 \mathcal U_0\widetilde L_x\mathcal U_0^*
 &=\dt(x)\widetilde L_{P_1^tx},\\
 \dt(x)&=\frac{\ep_K(x)}{\ep_K(P_1^tx)}\ch_L(x).
 \end{aligned}
\end{equation}
Preservation of $\sm_\Th$ gives the following conclusion.

\begin{lem}\label{lem:delta-character}
The function $\dt:\Z^n\to\T$ defined
in~\eqref{eq:U_0 and L tilde} is a character.
\end{lem}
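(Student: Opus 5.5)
We prove the character property the same way we proved it for $\ch_R$: conjugate the product relation of the operators $\widetilde L_x$ by the unitary $\mathcal U_0$ and compare scalars.

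\emph{Step 1: product relation and multiplier invariance.} By the transport through $\Ps_K$, the operators satisfy
\[
\widetilde L_x\widetilde L_y=\sm_\Th(x,y)\widetilde L_{x+y},
\qquad x,y\in\Z^n .
\]
If one wants to check this directly, expand $\ep_K(x)\ep_K(y)\ep_K(x+y)^{-1}=(-1)^{\sum_{i<j}K_{ij}(x_iy_j+y_ix_j)}$. Since $K$ is integral skew-symmetric, this sign equals $\exp(\pi i\,x^tKy)$, so $\sm_{\te'}(x,y)$ becomes $\sm_\Th(x,y)$. From~\eqref{eq:P_1} we have $P_1\Th P_1^t=\Th$. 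Hence
\[
(P_1^tx)^t\Th(P_1^ty)=x^tP_1\Th P_1^ty=x^t\Th y,
\]
so $\sm_\Th(P_1^tx,P_1^ty)=\sm_\Th(x,y)$.

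\emph{Step 2: conjugate the product relation.} Apply $\mathcal U_0(\cdot)\mathcal U_0^*$ to both sides of the product relation and use~\eqref{eq:U_0 and L tilde}. The left side becomes
\[
\dt(x)\dt(y)\,\widetilde L_{P_1^tx}\widetilde L_{P_1^ty}
=\dt(x)\dt(y)\,\sm_\Th(P_1^tx,P_1^ty)\,\widetilde L_{P_1^t(x+y)}.
\]
The right side becomes $\sm_\Th(x,y)\,\dt(x+y)\,\widetilde L_{P_1^t(x+y)}$. The operator $\widetilde L_{P_1^t(x+y)}$ is a unimodular scalar times a Heisenberg--Weyl operator, so it is unitary and hence nonzero, and the scalar coefficients can be compared. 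By the invariance from Step 1, this gives $\dt(x)\dt(y)=\dt(x+y)$.

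\emph{Step 3: values.} Each $\dt(x)$ lies in $\T$, because $\ch_L(x)\in\T$ and $\ep_K(x)=\pm1$. Continuity is automatic since $\Z^n$ is discrete. Therefore $\dt$ is a character.

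The only step that needs any care is the sign identity in Step 1, namely that $\ep_K$ exactly converts $\sm_{\te'}$ into $\sm_\Th$ modulo symmetric integer terms. That identity was already implicit in the statement that $\Ps_K$ is a $*$-isomorphism, so we can invoke it rather than reprove it.
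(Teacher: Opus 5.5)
Your proof is correct and follows the paper's argument. You conjugate $\widetilde L_x\widetilde L_y=\sm_\Th(x,y)\widetilde L_{x+y}$ by $\mathcal U_0$, use $P_1\Th P_1^t=\Th$ to show the two multipliers agree, and cancel the unitary $\widetilde L_{P_1^t(x+y)}$. Your explicit sign check, which converts $\sm_{\te'}$ into $\sm_\Th$, is a harmless extra verification of a relation the paper simply asserts.
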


See Appendix~\ref{app:delta-character} for the proof.

It remains to remove $\dt$. We do this by changing the scalar
factors in the identification of the left Weyl generators. For
a character $\om:\Z^n\to\T$, define the automorphism $\gm_\om$ of $A_\Th$ by
\[
 \gm_\om(U_x^\Th)=\om(x)U_x^\Th.
\]
Because $\om$ is a character, this formula preserves
multiplication and adjoints. Compose it with $\Ps_K$ and put
\begin{equation}\label{eq:Psi-K-omega}
 \begin{aligned}
 \Ps_{K,\om}&=\gm_\om\circ\Ps_K:A_{\te'}\longrightarrow A_\Th,\\
 \Ps_{K,\om}(U_x^{\te'})
 &=\om(x)\overline{\ep_K(x)}U_x^\Th.
 \end{aligned}
\end{equation}
Here $\overline{\ep_K(x)}=\ep_K(x)$ because $\ep_K(x)\in\{1,-1\}$,
so the displayed formula indeed represents $\gm_\om\circ\Ps_K$.
The maps $\Ps_K$ and $\gm_\om$ multiply Fourier coefficients by
scalars of modulus one, so they preserve rapid decrease.
Transporting the left action and inner product through
$\Ps_{K,\om}$ therefore gives a smooth
$A_\Th^\infty$--$A_\te^\infty$ equivalence bimodule on
$\mathcal S(M)$. Its completion is an $A_\Th$--$A_\te$
imprimitivity bimodule, denoted by ${}_{A_\Th}E_{A_\te}$.
The right action and right inner product remain those of $E'$.

Since
$\Ps_{K,\om}^{-1}(U_x^\Th)
=\ep_K(x)\overline{\om(x)}U_x^{\te'}$, the new left Heisenberg--Weyl
operators are
\begin{equation}\label{eq:L-omega}
 L_x^{(\om)}=\ep_K(x)\overline{\om(x)}L_x.
\end{equation}
Using~\eqref{eq:U_0 and L tilde}, their covariance relation is
\[
 \mathcal U_0L_x^{(\om)}\mathcal U_0^*
 =\dt(x)\frac{\om(P_1^tx)}{\om(x)}L_{P_1^tx}^{(\om)}.
\]
Thus the factor multiplying $L_{P_1^tx}^{(\om)}$ will be one
provided that
\begin{equation}\label{eq:gauge-character-equation}
 \frac{\om(P_1^tx)}{\om(x)}=\dt(x)^{-1},\qquad x\in\Z^n.
\end{equation}
We can solve this equation because $P_1-I_n$ is invertible. This
is the second use of the hypothesis that $W$ has no eigenvalue
$1$.

\begin{lem}\label{lem:gauge-solution}
There is a character $\om:\Z^n\to\T$
satisfying~\eqref{eq:gauge-character-equation}.
\end{lem}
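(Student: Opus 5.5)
We must find a character $\om:\Z^n\to\T$ with $\om(P_1^tx)\overline{\om(x)}=\overline{\dt(x)}$ for all $x\in\Z^n$. Identify $\widehat{\Z^n}$ with $\T^n=\R^n/\Z^n$ via $\om_t(x)=\exp(2\pi i\,x^tt)$, and write $\dt=\om_s$ for a unique $s\in\T^n$; this is possible by Lemma~\ref{lem:delta-character}. Since
\[
\om_t(P_1^tx)=\exp(2\pi i\,x^tP_1t)=\om_{P_1t}(x),
\]
the equation becomes the linear congruence
\[
(P_1-I_n)t\equiv -s \pmod{\Z^n}.
\]

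The plan is to solve this on the torus. First we show that $P_1-I_n$ is invertible over $\R$. By Lemma~\ref{W and A compatibility}, $P_1^t=\A^{-1}W\A$, so $P_1^t$ is similar to $W$. The hypothesis $1\notin\operatorname{spec}(W)$ then gives $1\notin\operatorname{spec}(P_1^t)=\operatorname{spec}(P_1)$. Hence $N:=P_1-I_n\in \mathrm{M}_n(\Z)$ has nonzero determinant.

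Next, choose a real lift $\tilde s\in\R^n$ of $s$ and put $t:=-N^{-1}\tilde s\in\R^n$. Its class in $\T^n$ satisfies $Nt=-\tilde s$, hence $Nt\equiv -s$ modulo $\Z^n$. Note that $N$ induces a well-defined endomorphism of $\T^n$ because it is integral. Moreover this endomorphism is surjective on $\T^n$ precisely because $\det N\neq0$, and this is the only point where invertibility is used. Then $\om=\om_t$ is a character of $\Z^n$ satisfying~\eqref{eq:gauge-character-equation}.

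There is no real obstacle. The only care needed is to check the transpose conventions in the identification $\widehat{\Z^n}\cong\T^n$, so that $x\mapsto P_1^tx$ on $\Z^n$ corresponds to $t\mapsto P_1t$ on $\T^n$, and to confirm that the solution needs only real invertibility of $N$, not invertibility over $\Z$. The solution $\om$ is not unique: it is determined up to the finite group of $\det N$-torsion points in $\ker(N:\T^n\to\T^n)$. Any choice will do for the subsequent construction.
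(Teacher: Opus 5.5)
Your proposal is correct and matches the paper's proof: both identify characters of $\Z^n$ with points of $\R^n/\Z^n$, reduce the equation to $(P_1-I_n)s\equiv -r \pmod{\Z^n}$, use that $P_1^t=\A^{-1}W\A$ has no eigenvalue $1$, and take $s=-(P_1-I_n)^{-1}r$ for a real lift $r$. Your added remarks on the transpose conventions and on non-uniqueness are accurate but not needed.
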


See Appendix~\ref{app:character-solution} for the proof.

Fix a character $\om$ as in the lemma and use $\Ps_{K,\om}$ for
the left identification. Since the operator $\mathcal U_0$ has
not changed, its right covariance and the relation
$\mathcal U_0^k=I$ still hold. We have also obtained the desired
left covariance.

\begin{prp}\label{prop:left-exact}
For the left action transported through $\Ps_{K,\om}$,
\begin{equation}\label{eq:left-exact}
 \mathcal U_0(b\xi)=\af_{P_1^t}(b)\mathcal U_0\xi,
 \qquad b\in A_\Th^\infty,\quad \xi\in\mathcal S(M).
\end{equation}
\end{prp}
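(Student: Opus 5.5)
The plan is to reduce \eqref{eq:left-exact} to a statement about the Weyl generators and then extend to all of $A_\Th^\infty$ by linearity and continuity. With the left identification $\Ps_{K,\om}$, left multiplication by $U_x^\Th$ on $\mathcal S(M)$ is the operator $L_x^{(\om)}=\ep_K(x)\overline{\om(x)}L_x$ of~\eqref{eq:L-omega}. By the covariance relation displayed just before~\eqref{eq:gauge-character-equation},
\[
 \mathcal U_0L_x^{(\om)}\mathcal U_0^*
 =\dt(x)\frac{\om(P_1^tx)}{\om(x)}L_{P_1^tx}^{(\om)} .
\]
Since $\om$ is chosen by Lemma~\ref{lem:gauge-solution} to satisfy~\eqref{eq:gauge-character-equation}, the scalar factor equals $1$. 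Hence $\mathcal U_0L_x^{(\om)}\mathcal U_0^*=L^{(\om)}_{P_1^tx}$ for every $x\in\Z^n$. This says exactly that
\[
 \mathcal U_0(U_x^\Th\xi)=U^\Th_{P_1^tx}\,\mathcal U_0\xi=\af_{P_1^t}(U_x^\Th)\,\mathcal U_0\xi,
 \qquad \xi\in\mathcal S(M).
\]

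Before this, I would check that the derivation of the displayed covariance is legitimate. The identity~\eqref{eq:U_0 and L_x} comes from $L_x=c_S(x)\pi(-Sx)$, the covariance $\mathcal U_0\pi(z)\mathcal U_0^*=\eta_{\mathcal U}(z)\pi(Qz)$, and $QS=SP_1^t$ from~\eqref{eq: QS}. Multiplying~\eqref{eq:U_0 and L_x} by $\ep_K(x)\overline{\om(x)}$ and rewriting $L_{P_1^tx}$ as $\ep_K(P_1^tx)\om(P_1^tx)L^{(\om)}_{P_1^tx}$ gives the displayed identity, with $\dt$ as in~\eqref{eq:U_0 and L tilde}. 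This uses $\ep_K^2=1$ and is a one-line computation.

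For general $b=\sum_x b_xU_x^\Th\in A_\Th^\infty$ with rapidly decreasing coefficients, the left action is $b\xi=\sum_x b_xL_x^{(\om)}\xi$. This series converges in $L^2(M)$, and in fact in the Fréchet topology of $\mathcal S(M)$, because each $L_x^{(\om)}$ is a unitary Heisenberg--Weyl operator whose Schwartz seminorms grow at most polynomially in $x$. Since $\mathcal U_0$ is bounded on $L^2(M)$, we can apply it termwise:
\[
 \mathcal U_0(b\xi)=\sum_x b_x L^{(\om)}_{P_1^tx}\mathcal U_0\xi .
\]
The right-hand side is $\af_{P_1^t}(b)\,\mathcal U_0\xi$, because $\af_{P_1^t}(b)=\sum_x b_xU^\Th_{P_1^tx}$ and reindexing by the lattice automorphism $P_1^t$ preserves rapid decay. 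The element $\af_{P_1^t}(b)$ lies in $A_\Th^\infty$, and $\af_{P_1^t}$ is indeed an automorphism of $A_\Th$ by~\eqref{eq:P_1}.

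The only mildly delicate point is the convergence and interchange step. I would handle it by working in $L^2(M)$: both sides are $L^2$-convergent sums of vectors, and the equality holds there. Both sides lie in $\mathcal S(M)$, since $\mathcal U_0$ preserves $\mathcal S(M)$ and the smooth module action maps into $\mathcal S(M)$. Hence the identity holds in $\mathcal S(M)$, and no Fréchet continuity of $\mathcal U_0$ is needed.
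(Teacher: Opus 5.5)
Your proposal is correct and follows the paper's proof: the gauge equation \eqref{eq:gauge-character-equation} makes the scalar in the covariance relation for $L_x^{(\om)}$ equal to one, which gives \eqref{eq:left-exact} on the generators $U_x^\Th$, and linearity together with convergence of the Fourier series extends it to all of $A_\Th^\infty$. Your re-derivation of the covariance formula and your $L^2$ convergence argument fill in details the paper leaves implicit.
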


See Appendix~\ref{app:left-exact} for the proof.

Combining Propositions~\ref{prop:right-exact},
\ref{prp:modified U} and~\ref{prop:left-exact}, we have obtained
\begin{equation}\label{eq:two-sided-covariance-core}
 \mathcal U_0(b\xi a)
 =\af_{P_1^t}(b)\mathcal U_0(\xi)\af_W(a),
 \qquad \mathcal U_0^k=I,
\end{equation}
for $b\in A_\Th^\infty$, $a\in A_\te^\infty$ and
$\xi\in\mathcal S(M)$. It remains to check the two inner
products. Besides giving the required equivariance, the right
inner-product identity will show that $\mathcal U_0$ is an
isometry for the module norm and hence extends to $E$.

For this calculation, we express the Fourier coefficients of
the inner products using the $L^2$ pairing. This will allow us
to use the unitarity of $\mathcal U_0$ on $L^2(M)$.

Recall that Li's original left inner product has expansion
\[
 {}_{A_{\te'}}\langle f,h\rangle
 =\sum_{x\in\Z^n}c_x(f,h)U_x^{\te'},
\]
where, by~\eqref{eq:left ip},
\begin{equation}\label{eq:left-ip-theta-prime}
 c_x(f,h)=\kappa(L_xh,f)_2.
\end{equation}
Here $\kappa>0$ is Li's normalization constant and
$(\cdot,\cdot)_2$ is linear in its second variable. The
transported left inner product is
\begin{equation}\label{eq:left-ip-transport}
 {}_{A_\Th}\langle f,h\rangle
 =\Ps_{K,\om}\bigl({}_{A_{\te'}}\langle f,h\rangle\bigr).
\end{equation}
Its Fourier coefficient at $x$ is
$\kappa\overline{\ep_K(x)}\om(x)(L_xh,f)_2$. Since the
Hilbert-space inner product is conjugate-linear in its first
variable,~\eqref{eq:L-omega} gives
\[
 (L_x^{(\om)}h,f)_2
 =\overline{\ep_K(x)}\om(x)(L_xh,f)_2.
\]
Consequently,
\begin{equation}\label{eq:left-ip-coefficient}
 {}_{A_\Th}\langle f,h\rangle(x)
 =\kappa(L_x^{(\om)}h,f)_2,\qquad x\in\Z^n.
\end{equation}
The right inner product is unchanged, and~\eqref{eq:right ip}
reads
\begin{equation}\label{eq:right-ip-coefficient}
 \langle f,h\rangle_{A_\te}(x)=(f,R_{-x}h)_2,
 \qquad x\in\Z^n.
\end{equation}

\begin{prp}\label{prop:inner-product-covariance}
For every $f,h\in\mathcal S(M)$,
\begin{align}
 \langle\mathcal U_0f,\mathcal U_0h\rangle_{A_\te}
 &=\af_W\bigl(\langle f,h\rangle_{A_\te}\bigr),
 \label{eq:right-ip-covariance}\\
 {}_{A_\Th}\langle\mathcal U_0f,\mathcal U_0h\rangle
 &=\af_{P_1^t}\bigl({}_{A_\Th}\langle f,h\rangle\bigr).
 \label{eq:left-ip-covariance}
\end{align}
The operator $\mathcal U_0$ extends uniquely to a surjective
isometry of ${}_{A_\Th}E_{A_\te}$. Both inner-product identities
and the module covariance~\eqref{eq:two-sided-covariance-core}
hold on this completion, where $\mathcal U_0^k=I$ as well.
\end{prp}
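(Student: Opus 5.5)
The plan is to compute Fourier coefficients directly, using the $L^2$ pairing formulas \eqref{eq:right-ip-coefficient} and \eqref{eq:left-ip-coefficient} together with the unitarity of $\mathcal U_0$ on $L^2(M)$. Afterwards we extend by continuity.

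For the right inner product, fix $f,h\in\mathcal S(M)$ and $x\in\Z^n$. Proposition~\ref{prp:modified U} gives $\mathcal U_0R_y\mathcal U_0^*=R_{Wy}$ for all $y$. Hence $R_{-x}\mathcal U_0=\mathcal U_0R_{-W^{-1}x}$, and
\[
\langle\mathcal U_0f,\mathcal U_0h\rangle_{A_\te}(x)
=(\mathcal U_0f,R_{-x}\mathcal U_0h)_2
=(\mathcal U_0f,\mathcal U_0R_{-W^{-1}x}h)_2
=(f,R_{-W^{-1}x}h)_2
=\langle f,h\rangle_{A_\te}(W^{-1}x).
\]
On the other hand, $\af_W(\sum_y c_yU_y^\te)=\sum_y c_yU_{Wy}^\te$, so the coefficient of $\af_W(\langle f,h\rangle)$ at $x$ is $\langle f,h\rangle(W^{-1}x)$. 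This proves \eqref{eq:right-ip-covariance}.

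The left identity is analogous. The construction of $\om$ (Lemma~\ref{lem:gauge-solution}) gives $\mathcal U_0L^{(\om)}_y\mathcal U_0^*=L^{(\om)}_{P_1^ty}$. Then
\[
{}_{A_\Th}\langle\mathcal U_0f,\mathcal U_0h\rangle(x)
=\kappa(L^{(\om)}_x\mathcal U_0h,\mathcal U_0f)_2
=\kappa(\mathcal U_0L^{(\om)}_{(P_1^t)^{-1}x}h,\mathcal U_0f)_2
={}_{A_\Th}\langle f,h\rangle((P_1^t)^{-1}x),
\]
which is exactly the coefficient of $\af_{P_1^t}({}_{A_\Th}\langle f,h\rangle)$ at $x$. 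Both sides lie in the smooth algebras, and their coefficient sequences agree, so the elements agree. The only care needed is the conjugate-linearity convention in the first slot and the direction of $W$ versus $W^{-1}$; since the phases were already absorbed exactly into $R$ and $L^{(\om)}$, no scalar factors remain.

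For the extension, the right identity gives
\[
\|\langle\mathcal U_0f,\mathcal U_0f\rangle_{A_\te}\|=\|\af_W(\langle f,f\rangle)\|=\|\langle f,f\rangle\|,
\]
since $\af_W$ is a $*$-automorphism and hence isometric. So $\mathcal U_0$ is isometric for the module norm on the dense subspace $\mathcal S(M)$, and it extends uniquely to an isometry of $E$. Its image contains $\mathcal U_0\mathcal S(M)=\mathcal S(M)$, which is dense, and the image of an isometry from a complete space is closed; hence the extension is surjective. The identities \eqref{eq:two-sided-covariance-core}, the two inner-product identities and $\mathcal U_0^k=I$ all pass to the completion by density and continuity:
\begin{itemize}
\item the module actions are continuous in the module norm;
\item the inner products are continuous;
\item $\af_W$ and $\af_{P_1^t}$ are continuous.
\end{itemize}
For the left action by all of $A_\Th$, we first use density of $A_\Th^\infty$ and the bound $\|b\xi\|\le\|b\|\|\xi\|$.

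The main point needing care is that the module norm from the left inner product agrees with the one from the right inner product. This is standard for imprimitivity bimodules, so continuity for left data follows as well.
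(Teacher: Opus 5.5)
Your proposal is correct and follows the paper's proof essentially step by step: the same $L^2$-pairing computation of Fourier coefficients using unitarity of $\mathcal U_0$ and the exact covariance of $R_x$ and $L_x^{(\om)}$, then the isometry estimate from the right inner product and extension by continuity. The only difference is cosmetic, in the surjectivity step. The paper obtains it from $\mathcal U_0^k=I$, so that $\mathcal U_0^{k-1}$ is an inverse. You instead use that an isometry on a complete space has closed range containing the dense subspace $\mathcal S(M)$. Both arguments work.
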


See Appendix~\ref{app:inner-product-covariance} for the proof.

We have now checked all the conditions for $E$ to be a
$\Z_k$-equivariant $A_\Th$--$A_\te$ imprimitivity bimodule,
where the action on $E$ is generated by $\mathcal U_0$. The
crossed-product imprimitivity theorem therefore gives the
following result.

\begin{prp}\label{prp:Th ME}
For $\Th=\te'+K$ and $P_1=R_0^{-1}PR_0$ as in
\eqref{eq:P_1 def},
\[
 A_\Th\rtimes_{\af_{P_1^t}}\Z_k
 \sim_M A_\te\rtimes_{\af_W}\Z_k.
\]
\end{prp}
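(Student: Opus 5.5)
The plan is to recognize Proposition~\ref{prp:Th ME} as a direct application of the equivariant imprimitivity theorem of Combes and Curto--Muhly--Williams \cites{Combes1984,CMW1984}. All that remains is to package the structures built above into the hypotheses of that theorem.

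First, we define the $\Z_k$-action on the completed bimodule $E={}_{A_\Th}E_{A_\te}$. A generator $j\in\Z_k$ acts by $\mathcal U_0^j$, the power of the extension provided by Proposition~\ref{prop:inner-product-covariance}. Since $\mathcal U_0^k=I$ on $E$, this is a well-defined group homomorphism from $\Z_k$ into the surjective isometries of $E$. Because $\Z_k$ is discrete, continuity is automatic. On the algebras we use the actions $\af_{P_1^t}$ on $A_\Th$ and $\af_W$ on $A_\te$. These are $\Z_k$-actions: $P_1^k=I_n$ and $P_1\Th P_1^t=\Th$ by \eqref{eq:P_1}, while $W^k=I_n$ and $W^t\te W=\te$ by \eqref{eq:hypW}.

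Second, we verify the compatibility axioms of an equivariant imprimitivity bimodule for the generator, and obtain them for all powers by iteration. The axioms are
\[
 \mathcal U_0(b\xi a)=\af_{P_1^t}(b)\,\mathcal U_0\xi\,\af_W(a),\qquad
 \langle\mathcal U_0f,\mathcal U_0h\rangle_{A_\te}=\af_W(\langle f,h\rangle_{A_\te}),\qquad
 {}_{A_\Th}\langle\mathcal U_0f,\mathcal U_0h\rangle=\af_{P_1^t}({}_{A_\Th}\langle f,h\rangle).
\]
These identities hold on $\mathcal S(M)$ with $b,a$ smooth, by \eqref{eq:two-sided-covariance-core} and Proposition~\ref{prop:inner-product-covariance}. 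That proposition also extends them to the completion.

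The only point deserving care is the passage from smooth to arbitrary coefficients $b\in A_\Th$ and $a\in A_\te$. We handle it by density and continuity. The module actions are contractive, so $\|b\xi\|\le\|b\|\|\xi\|$, and the same holds on the right. The automorphisms $\af_{P_1^t}$ and $\af_W$ are isometric, and $\mathcal U_0$ is an isometry of $E$. Hence both sides of the covariance identity are continuous in $(b,\xi,a)$, and equality extends from the dense set $A_\Th^\infty\times\mathcal S(M)\times A_\te^\infty$. Likewise, both sides of each inner-product identity are continuous in $(f,h)$, so equality extends from $\mathcal S(M)\times\mathcal S(M)$ to $E\times E$.

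Finally, $E$ is an $A_\Th$--$A_\te$ imprimitivity bimodule carrying a compatible $\Z_k$-action. Therefore \cite{CMW1984}*{Theorem~1} yields that $E\rtimes\Z_k$ is an $A_\Th\rtimes_{\af_{P_1^t}}\Z_k$--$A_\te\rtimes_{\af_W}\Z_k$ imprimitivity bimodule, which gives the asserted strong Morita equivalence. No step is a genuine obstacle, since the substantive work is contained in Propositions~\ref{prp:modified U}, \ref{prop:left-exact} and~\ref{prop:inner-product-covariance}. The only thing to double-check is that the convention for equivariance in \cite{CMW1984} matches: an action by linear isometries that is compatible with both inner products, with the left algebra acted on by $\af_{P_1^t}$ and the right algebra by $\af_W$.
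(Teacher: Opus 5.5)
Your proposal is correct and takes the same approach as the paper: equip $E$ with the $\Z_k$-action generated by $\mathcal U_0$ and apply the Combes/Curto--Muhly--Williams imprimitivity theorem, using \eqref{eq:two-sided-covariance-core} and Proposition~\ref{prop:inner-product-covariance}. Your density argument, which extends the module covariance from smooth to arbitrary coefficients, is sound; the paper leaves this step implicit.
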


\begin{proof}
Apply the crossed-product imprimitivity theorem
\cites{Combes1984,CMW1984} to $E$ with the action generated by
$\mathcal U_0$, using Proposition~\ref{prop:inner-product-covariance}
and~\eqref{eq:two-sided-covariance-core}.
\end{proof}

To finish the proof of the main theorem, we must return from
the parameter $\Th$ to $g\te$ and remove the special-form
assumption on $g$. Both steps are changes of coordinates.

\begin{proof}[Proof of Theorem~\ref{thm:main}]
First suppose that $g$ has the special form used above. Recall
that $g=\rho(R_0)\nu(K)g'$, $\Th=\nu(K)g'\te$ and
$P_1=R_0^{-1}PR_0$. Proposition~\ref{prp:Th ME} gives
\begin{equation}\label{eq:normmorita}
 A_\Th\rtimes_{\af_{P_1^t}}\Z_k
 \sim_M A_\te\rtimes_{\af_W}\Z_k.
\end{equation}
Since $g\te=R_0\Th R_0^t$, define
\[
 \Ph_{R_0}:A_\Th\longrightarrow A_{g\te},\qquad
 \Ph_{R_0}(U_x^\Th)=U_{(R_0^t)^{-1}x}^{g\te}.
\]
The identity $\sm_{g\te}((R_0^t)^{-1}x,(R_0^t)^{-1}y)=\sm_\Th(x,y)$
shows that $\Ph_{R_0}$ is a $*$-isomorphism, with inverse
$U_y^{g\te}\mapsto U_{R_0^ty}^{\Th}$. Since
$(R_0^t)^{-1}P_1^t=P^t(R_0^t)^{-1}$, it is equivariant:
\[
 \Ph_{R_0}\circ\af_{P_1^t}=\af_{P^t}\circ\Ph_{R_0}.
\]
Transporting the left action and inner product by
\[
 b\cdot\xi=\Ph_{R_0}^{-1}(b)\cdot\xi,\qquad
 {}_{A_{g\te}}\langle\xi,\eta\rangle
 =\Ph_{R_0}\bigl({}_{A_\Th}\langle\xi,\eta\rangle\bigr),
\]
gives a $\Z_k$-equivariant $A_{g\te}$--$A_\te$
imprimitivity bimodule. In particular,
\begin{equation}\label{eq:special g ME}
 \begin{aligned}
 A_{g\te}\rtimes_{\af_{P^t}}\Z_k
 &\cong A_\Th\rtimes_{\af_{P_1^t}}\Z_k\sim_M A_\te\rtimes_{\af_W}\Z_k.
 \end{aligned}
\end{equation}

For general $g=\begin{psmallmatrix}A&B\\C&D\end{psmallmatrix}$,
Lemma~\ref{lem:special-form-reduction} gives $R\in\GL_n(\Z)$ such that
 $g_0:=g\rho(R)$
has special form. The blocks of $g_0$ are
\begin{equation}\label{eq:g0-blocks}
 g_0=\begin{pmatrix} 
 AR & B (R^t)^{-1}\\
 CR & D (R^t)^{-1}\end{pmatrix}.
\end{equation}
Put
\begin{equation}\label{eq:theta0-W0}
 \te_0=R^{-1}\te(R^t)^{-1},\qquad W_0=R^tW(R^t)^{-1}.
\end{equation}
Then $\te_0\in\mathcal T_n$ and $W_0\in\GL_n(\Z)$. Writing
$g_0=\begin{psmallmatrix}A_0&B_0\\C_0&D_0\end{psmallmatrix}$, we have
\begin{align}
 C_0\te_0+D_0
 &=(C\te+D)(R^t)^{-1},\label{eq:g0-denominator}\\
 A_0\te_0+B_0
 &=(A\te+B)(R^t)^{-1}.\label{eq:g0-numerator}
\end{align}
The first matrix is invertible, so $g_0\te_0$ is defined, and
\begin{equation}\label{eq:g0theta0-gtheta}
 \begin{aligned}
 g_0\te_0
 &=(A\te+B)(R^t)^{-1}\bigl((C\te+D)(R^t)^{-1}\bigr)^{-1}=g\te.
 \end{aligned}
\end{equation}
By~\eqref{eq:theta0-W0} and the hypotheses on $W$,
\begin{equation}\label{eq:W0-hypotheses}
 W_0^t\te_0W_0=\te_0,\qquad W_0^k=I_n,\qquad
 1\notin\operatorname{spec}(W_0).
\end{equation}
Since $\rho$ is a group homomorphism,
\begin{equation}\label{eq:rho-W0}
 \rho(W_0^t)=\rho(R)^{-1}\rho(W^t)\rho(R),
\end{equation}
and hence
\begin{equation}\label{eq:g0-conjugacy}
 g_0\rho(W_0^t)g_0^{-1}
 =g\rho(W^t)g^{-1}=\rho(P).
\end{equation}
Taking $k$th powers in~\eqref{eq:g0-conjugacy} and using
injectivity of $\rho$ gives $P^k=I_n$. Applying the same
identity to $g_0\te_0$, using $\rho(W_0^t)\te_0=\te_0$, gives
\[
 P(g\te)P^t=P(g_0\te_0)P^t=g_0\te_0=g\te.
\]

The special-form case therefore gives a $\Z_k$-equivariant
$A_{g\te}$--$A_{\te_0}$ imprimitivity bimodule $E_0$ and
\begin{equation}\label{eq:special-result-g0}
 A_{g_0\te_0}\rtimes_{\af_{P^t}}\Z_k
 \sim_M A_{\te_0}\rtimes_{\af_{W_0}}\Z_k.
\end{equation}
Define
\begin{equation}\label{eq:Phi-R-general}
 \Ph_R:A_{\te_0}\longrightarrow A_\te,\qquad
 \Ph_R(U_x^{\te_0})=U_{(R^t)^{-1}x}^\te.
\end{equation}
The identity $((R^t)^{-1}x)^t\te((R^t)^{-1}y)=x^t\te_0y$
shows that $\Ph_R$ is a $*$-isomorphism, with inverse
$U_y^\te\mapsto U_{R^ty}^{\te_0}$. Since
\begin{equation}\label{eq:R-W-intertwining}
 (R^t)^{-1}W_0=W(R^t)^{-1},
\end{equation}
it is equivariant:
\[
 \Ph_R\circ\af_{W_0}=\af_W\circ\Ph_R.
\]
Transport the right action and inner product of $E_0$ by
\[
 \begin{aligned}
 \xi\cdot a&=\xi\cdot_{E_0}\Ph_R^{-1}(a),\\
 \langle\xi,\eta\rangle_{A_\te}
 &=\Ph_R\bigl(\langle\xi,\eta\rangle_{A_{\te_0}}\bigr).
 \end{aligned}
\]
Equivariance of $\Ph_R$ makes the resulting bimodule $E_g$ a
$\Z_k$-equivariant $A_{g\te}$--$A_\te$ imprimitivity bimodule
with cyclic operator $\mathcal U:=\mathcal U_0$.
The crossed-product imprimitivity theorem
\cites{Combes1984,CMW1984} gives
\[
 A_{g\te}\rtimes_{\af_{P^t}}\Z_k
 \sim_M A_\te\rtimes_{\af_W}\Z_k.
\]
\end{proof}

\begin{rmk}\label{rmk:earlier-equivariant-modules}
The construction includes familiar equivariant Heisenberg right
modules. For example, let $n=2p$, let $\te$ be invertible and
take the full-duality transformation
$g=\begin{psmallmatrix}0&I_n\\I_n&0\end{psmallmatrix}$.
Then $q=0$, the finite factor is absent, and
$Q=T_{11}WT_{11}^{-1}$. Up to the phase choices above, its
unitary implementation gives the metaplectic covariance used by
Chakraborty and Luef~\cite{CL}*{Theorem~5.4} to extend the Bott
Heisenberg right module to a crossed product. Related fundamental
right modules on $\mathcal S(\R^p\times\Z^q)$ are treated by
Chakraborty~\cite{ChTrace}*{Theorem~4.2}, who also notes the
possibility of lifting the associated Morita equivalence in
\cite{ChTrace}*{Remark~4.3}. For irrational rotation algebras,
our equivariant Morita equivalence theorem, Theorem~\ref{thm:main},
recovers the equivariant Morita equivalences used in~\cite{BCHL}
to classify crossed products by finite cyclic subgroups of
$\rm{SL}_2(\Z)$ up to Morita equivalence.
Under the hypotheses of
Theorem~\ref{thm:main}, the verification of the left action and
both inner products gives such an equivariant
imprimitivity bimodule.
\end{rmk}

\section{Applications}

We now give several consequences of Theorem~\ref{thm:main}.
We begin with finite cyclic group actions on two-dimensional
noncommutative tori, where the result provides the complete Morita
classification of the corresponding crossed products. We then turn
to the canonical flip. Since $\rh(-I_n)$ is central in
$\rm{SO}(n,n|\Z)$, the conjugacy condition in
Theorem~\ref{thm:main} is automatic for the flip. Hence
$A_{g\te}\rtimes\Z_2\sim_M A_\te\rtimes\Z_2$ whenever $g\te$ is defined, in every
dimension.

The converse problem is more subtle: does Morita equivalence of the
flip crossed products imply Morita equivalence of the underlying
noncommutative tori? For nondegenerate parameters, this is known from
\cite{ChTrace}*{Corollary~5.11}. We prove that it holds without a
simplicity assumption. Combining this descent result with the structure theory
of higher-dimensional noncommutative tori yields a complete
classification of flip crossed products in dimensions at most four.
We then use integral elements in the centralizer of
$\rh(W^t)$ to construct infinite families of parameters giving
Morita-equivalent crossed products for a fixed finite-order symmetry
$W$ satisfying $1\notin\operatorname{spec}(W)$.

\subsection{Finite cyclic group actions on rotation algebras}

We first consider finite cyclic group actions on two-dimensional
noncommutative tori. Put
\[
 J_0=\begin{pmatrix}0&1\\-1&0\end{pmatrix},\qquad
 \te_t=tJ_0,\qquad t\in\R,
\]
and write $A_t=A_{\te_t}$. Up to conjugacy, the nontrivial finite
cyclic subgroups of $\rm{SL}_2(\Z)$ have orders $2,3,4$ and $6$. We
choose the following generators:
\[
 W_2=-I_2,\qquad
 W_3=\begin{pmatrix}0&1\\-1&-1\end{pmatrix},\qquad
 W_4=\begin{pmatrix}0&1\\-1&0\end{pmatrix},\qquad
 W_6=\begin{pmatrix}1&1\\-1&0\end{pmatrix}.
\]
Thus $W_k$ has order $k$. Since $W^tJ_0W=J_0$ for every
$W\in\rm{SL}_2(\Z)$, each $W_k$ preserves $\te_t$ and hence defines
an action of $\Z_k$ on $A_t$. Let us write
\[
 C_{t,k}=A_t\rtimes_{\af_{W_k}}\Z_k,
 \qquad k\in\{2,3,4,6\}.
\]

Recall that $\rm{GL}_2(\Z)$ acts partially on $\R$ by fractional
linear transformations. More precisely, for
$M=\begin{pmatrix}a&b\\c&d\end{pmatrix}\in\rm{GL}_2(\Z)$
and $t\in\R$ such that $ct+d\neq0$, we set
\[
 M\cdot t=\frac{at+b}{ct+d}.
\]
We first show that this action, which gives the Morita equivalence
classes of rotation algebras, also preserves the Morita equivalence
classes of the corresponding crossed products.

\begin{prp}\label{prp:two-dimensional-orbit}
Let $t\in\R$ and let
$M=\begin{pmatrix}a&b\\c&d\end{pmatrix}\in\rm{GL}_2(\Z)$ be such
that $M\cdot t$ is defined.
Then, for every $k\in\{2,3,4,6\}$, we have
$C_{M\cdot t,k}\sim_M C_{t,k}$.
\end{prp}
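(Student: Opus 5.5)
The plan is to apply Theorem~\ref{thm:main} with $n=2$, $\te=\te_t$ and $W=W_k$, using one explicit element $g\in\rm{SO}(2,2|\Z)$ attached to $M$. This avoids factoring $M$ into generators, which could pass through intermediate parameters where the partial action is undefined.

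\textbf{Hypotheses on $W_k$.} First I check \eqref{eq:hypW} for $W_k$. We have $W_k^t\te_tW_k=\te_t$ because $W_k\in\rm{SL}_2(\Z)$, and $W_k^k=I_2$ by construction. The eigenvalues of $W_k$ are primitive $k$-th roots of unity, so $1\notin\operatorname{spec}(W_k)$ for each $k\in\{2,3,4,6\}$.

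\textbf{Case $\det M=1$.} Put
\[
 g=\begin{pmatrix} aI_2 & bJ_0\\ -cJ_0 & dI_2\end{pmatrix}.
\]
Since $J_0^2=-I_2$, we get $C\te_t+D=-ctJ_0^2+dI_2=(ct+d)I_2$, which is invertible exactly when $M\cdot t$ is defined. Then
\[
 g\te_t=(atJ_0+bJ_0)(ct+d)^{-1}=\te_{M\cdot t}.
\]
Next I verify $g\in\rm{SO}(2,2|\Z)$, which amounts to $g^tJ_{2,2}g=J_{2,2}$ and $\det g=1$. This is a short block computation using $J_0^t=-J_0$, $J_0^2=-I_2$ and $ad-bc=1$. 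For the conjugacy condition \eqref{eq:conjugacy}, note $\rho(W^t)=\diag(W^t,W^{-1})$. The identity $W^tJ_0W=J_0$ gives $W^tJ_0=J_0W^{-1}$ and $W^{-1}J_0=J_0W^t$, so $g$ commutes with $\rho(W^t)$. Hence $P=W^t$ and $\af_{P^t}=\af_{W_k}$. Theorem~\ref{thm:main} then yields $C_{M\cdot t,k}\sim_M C_{t,k}$ directly.

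\textbf{Case $\det M=-1$.} Write $M=M'\begin{psmallmatrix}-1&0\\0&1\end{psmallmatrix}$ with $M'\in\rm{SL}_2(\Z)$. Then $M\cdot t=M'\cdot(-t)$, and the domains agree, since $ct+d\neq0$ is equivalent to the corresponding condition for $M'$ at $-t$. It therefore suffices to show $C_{-t,k}\cong C_{t,k}$. Take $R=\diag(1,-1)$. Then $R\te_tR^t=\te_{-t}$, so $\Ph(U_x^{\te_t})=U_{(R^t)^{-1}x}^{\te_{-t}}$ is an isomorphism $A_t\to A_{-t}$, as in the proof of Theorem~\ref{thm:main}. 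This isomorphism intertwines $\af_{W_k}$ with $\af_{RW_kR^{-1}}$.

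The main remaining point is to identify $\af_{RW_kR^{-1}}$ with the standard action up to conjugacy. The matrix $RW_kR^{-1}$ lies in $\rm{SL}_2(\Z)$ and has order $k$. Every cyclic subgroup of order $k$ in $\rm{SL}_2(\Z)$ is $\rm{SL}_2(\Z)$-conjugate to $\langle W_k\rangle$. So there is $X\in\rm{SL}_2(\Z)$ with $X\langle RW_kR^{-1}\rangle X^{-1}=\langle W_k\rangle$; for instance $RW_kR^{-1}=W_k^{-1}$ holds for $k=4$. The isomorphism $\af_X$ of $A_{-t}$ preserves $\te_{-t}$, because $X\in\rm{SL}_2(\Z)$. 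It carries the $\Z_k$-action generated by $RW_kR^{-1}$ to the action generated by $W_k^{\pm1}$. A crossed product by $\Z_k$ depends only on the image subgroup of automorphisms, not on the choice of generator, so $C_{-t,k}\cong C_{t,k}$.

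Combining the two cases and using transitivity of $\sim_M$ proves the proposition. I expect the membership check $g\in\rm{SO}(2,2|\Z)$ and the conjugacy-class bookkeeping in the determinant $-1$ case to be the only steps needing care.
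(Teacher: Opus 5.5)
Your proof is correct. The $\det M=1$ case is exactly the paper's argument: the same matrix $\widehat M$, the same commutation with $\rho(W_k^t)$, and $P=W_k^t$. The $\det M=-1$ case follows a genuinely different route.

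\textbf{The paper's route.} The paper stays inside Theorem~\ref{thm:main}. It sets $g=\widehat{M_+}\rho(K)$ with $K=\begin{psmallmatrix}0&1\\1&0\end{psmallmatrix}$ and checks directly that $\rho(K)\te_t=-\te_t$ and $KW_kK=W_k^{-1}$ for the four chosen generators. This gives $g\rho(W_k^t)g^{-1}=\rho((W_k^{-1})^t)$, so the theorem applies once with $P=(W_k^{-1})^t$. The paper then finishes with $A\rtimes_{\af^{-1}}\Z_k\cong A\rtimes_{\af}\Z_k$.

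\textbf{Your route.} You factor $M=M'\,\diag(-1,1)$ and use the $\det=1$ case at $-t$. You then prove the stronger statement $C_{-t,k}\cong C_{t,k}$ by an actual isomorphism $A_t\to A_{-t}$, built from $R=\diag(1,-1)$ and corrected by a lattice automorphism $\af_X$ with $X\in\mathrm{SL}_2(\Z)$.

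\textbf{What each buys.} Your route gives an isomorphism rather than a Morita equivalence for the sign flip, and it does not depend on the particular generators $W_k$. Its cost is the appeal to the classification of finite cyclic subgroups of $\mathrm{SL}_2(\Z)$ up to conjugacy. That fact is true (for instance, from $\mathrm{SL}_2(\Z)\cong\Z_4*_{\Z_2}\Z_6$), but it is external, and it is the only step you leave unjustified.

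\textbf{Removing the external fact.} You can make your argument self-contained. Take $X=J_0$. Then $J_0R=-K$, so
\[
XRW_kR^{-1}X^{-1}=KW_kK^{-1}=W_k^{-1}
\]
for every $k\in\{2,3,4,6\}$, which is precisely the paper's identity. With this choice, your composite isomorphism is $U_x^{\te_t}\mapsto U_{-Kx}^{\te_{-t}}$, and both arguments end with the same generator-independence step for $\Z_k$-crossed products.
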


\begin{proof}
First suppose that $\det(M)=1$. Define
\[
 \widehat M=
 \begin{pmatrix}
  aI_2&bJ_0\\
  -cJ_0&dI_2
 \end{pmatrix}\in \rm M_4(\Z).
\]
A direct computation shows that $\widehat M\in\rm{SO}(2,2|\Z)$ and
\[
 \widehat M\te_t
 =\bigl(a\te_t+bJ_0\bigr)
   \bigl(-cJ_0\te_t+dI_2\bigr)^{-1}
 =\te_{M\cdot t}.
\]
For every $W\in\rm{SL}_2(\Z)$, the identity $W^tJ_0W=J_0$ gives
$W^tJ_0=J_0W^{-1}$ and $J_0W^t=W^{-1}J_0$.
Using these two relations in the four blocks of $\widehat M$, we
obtain $\widehat M\rh(W^t)=\rh(W^t)\widehat M$. In particular,
$\widehat M\rh(W_k^t)\widehat M^{-1}=\rh(W_k^t)$.
Since $1\notin\operatorname{spec}(W_k)$,
Theorem~\ref{thm:main}, applied with $P=W_k^t$, gives
\[
 C_{M\cdot t,k}\sim_M C_{t,k}.
\]

Now suppose that $\det(M)=-1$. Put
$D=\begin{pmatrix}1&0\\0&-1\end{pmatrix}$ and
$K=\begin{pmatrix}0&1\\1&0\end{pmatrix}$. Then
$M_+=MD\in\rm{SL}_2(\Z)$ and $M_+\cdot(-t)=M\cdot t$. Moreover,
$\rh(K)\te_t=K\te_tK^t=-\te_t$
and, for our choice of the matrices $W_k$, we have
$KW_kK=W_k^{-1}$.
Set $g=\widehat{M_+}\rh(K)$. From the first part of the proof, we
have
\[
 g\te_t=\te_{M\cdot t},\qquad
 g\rh(W_k^t)g^{-1}=\rh\bigl((W_k^{-1})^t\bigr).
\]
Applying Theorem~\ref{thm:main} with $P=(W_k^{-1})^t$, we obtain
\[
 A_{M\cdot t}\rtimes_{\af_{W_k^{-1}}}\Z_k
 \sim_M C_{t,k}.
\]
Put $A=A_{M\cdot t}$ and $\alpha=\af_{W_k}$. Precomposing
the cyclic action with the automorphism $[j]\mapsto[-j]$ of
$\Z_k$ replaces $\alpha$ by $\alpha^{-1}=\af_{W_k^{-1}}$.

Let $u$ and $v$ be the canonical implementing unitaries in
$A\rtimes_\alpha\Z_k$ and $A\rtimes_{\alpha^{-1}}\Z_k$,
respectively. Since
\[
 u^*au=\alpha^{-1}(a)\quad(a\in A),\qquad (u^*)^k=1,
\]
the universal property gives a $*$-homomorphism
\[
 A\rtimes_{\alpha^{-1}}\Z_k\longrightarrow
 A\rtimes_\alpha\Z_k,\qquad a\longmapsto a,\quad v\longmapsto u^*.
\]
Its inverse fixes $A$ and sends $u$ to $v^*$. Thus
\[
 A_{M\cdot t}\rtimes_{\af_{W_k^{-1}}}\Z_k
 \cong C_{M\cdot t,k},
\]
and the result follows.
\end{proof}

We now obtain the complete Morita equivalence classification in
dimension two. For irrational parameters, the necessary part of the
classification was proved in \cite{BCHL}*{Theorem~5.3}.

\begin{thm}
\label{thm:two-dimensional-classification}
Let $s,t\in\R$ and let $k,l\in\{2,3,4,6\}$. The following conditions
are equivalent:
\begin{enumerate}
\renewcommand{\theenumi}{\roman{enumi}}
\renewcommand{\labelenumi}{\textup{(\theenumi)}}
\item $C_{t,k}\sim_M C_{s,l}$;
\item $k=l$ and $A_t\sim_M A_s$;
\item $k=l$ and there exists
$
 M=\begin{pmatrix}a&b\\c&d\end{pmatrix}\in\rm{GL}_2(\Z)
$
such that $ct+d\neq 0$ and
$
 s=M\cdot t=\frac{at+b}{ct+d}.
$
\end{enumerate}
Equivalently, $k=l$ and either $s,t\in\Q$, or $s,t\in\R\setminus\Q$
belong to the same $\rm{GL}_2(\Z)$-orbit.
\end{thm}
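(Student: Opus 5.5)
Three implications are needed: (iii)$\Rightarrow$(i), (ii)$\Leftrightarrow$(iii), and (i)$\Rightarrow$(ii). The first is Proposition~\ref{prp:two-dimensional-orbit} directly. The equivalence (ii)$\Leftrightarrow$(iii) is Rieffel's classification of rotation algebras \cite{Rieffel1981}. For irrational parameters this is literally Rieffel's theorem. For rational parameters, every $A_t$ with $t\in\Q$ is Morita equivalent to $C(\T^2)$, and $\rm{GL}_2(\Z)$ acts transitively on $\Q$ (with $ct+d\neq0$ automatically satisfiable). Rational and irrational parameters are never related: $A_t$ is simple exactly when $t\notin\Q$, and fractional linear maps preserve rationality. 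This also gives the final ``equivalently'' sentence. So everything reduces to (i)$\Rightarrow$(ii).

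For (i)$\Rightarrow$(ii) I would recover both the order $k$ and the Morita class of $A_t$ from Morita invariants of $C_{t,k}$, splitting by rationality.

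\emph{Irrational case.} By \cite{ELPW}, $C_{t,k}$ is a simple AF algebra. By \cite{BCHL}*{Theorem~5.3}, the Morita class of $C_{t,k}$ determines $k$ and the $\rm{GL}_2(\Z)$-orbit of $t$.

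\emph{Rational case.} Here $C_{t,k}$ is non-simple with large centre. I would use the primitive ideal space, which is a Morita invariant. The plan is as follows.
\begin{enumerate}
\item Show that $A_t\cong\Gamma(E)$ is a continuous-trace/homogeneous algebra over $\T^2$. If $t=a/b$ in lowest terms, it is $M_b$-valued over $\T^2$, where $\T^2$ is the spectrum of the centre generated by $U_{bx}$.
\item Show that $\alpha_{W_k}$ induces the linear action of $W_k$ (up to transpose/inverse) on this $\T^2$.
\item Conclude that $\mathrm{Prim}(C_{t,k})$ fibres over the orbit space $\T^2/\Z_k$, with finitely many points over the fixed points of the action.
\end{enumerate}
For $k=2,3,4,6$ the numbers of points of $\T^2$ with nontrivial stabilizer differ, and so does the structure of the stabilizers: $4$ fixed points for $k=2$; $3$ fixed points for $k=3$; for $k=4$, $2$ fixed points plus a $2$-point orbit with stabilizer $\Z_2$; for $k=6$, one fixed point, a $2$-point orbit with stabilizer $\Z_3$, and a $3$-point orbit with stabilizer $\Z_2$. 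The non-Hausdorff points of $\mathrm{Prim}$ count the isotropy irreducibles over these singular orbits, and this count distinguishes $k$. Once $k=l$ is known, (ii) is automatic since all rational $A_t$ are Morita equivalent.

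It remains to exclude a Morita equivalence between a rational and an irrational crossed product. Simple AF algebras have one-point primitive ideal space, while for rational $t$ the space $\mathrm{Prim}(C_{t,k})$ is infinite. So mixed equivalences are impossible.

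\emph{Main obstacle.} The hardest step is the careful analysis of $\mathrm{Prim}(C_{t,k})$ for rational $t$, and in particular pinning down how the action on $\T^2$ lifts at fixed points, since the projective twist could change the number of irreducible representations. The fallback I would try first is $K$-theory, which is also a Morita invariant: compute $K_0(C_{t,k})$ for $t=0$ (i.e.\ $C(\T^2)\rtimes\Z_k$), where the ranks $6,8,9,10$ for $k=2,3,4,6$ are known from \cite{ELPW}. Then note that for rational $t$ the rank should be unchanged by a continuity/homotopy argument in $t$. Since these ranks also equal those in the irrational case, $K$-theory separates $k$ but not rationality; rationality is handled by simplicity as above.
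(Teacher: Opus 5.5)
Your argument is correct in outline. Your (iii)$\Rightarrow$(i) and (ii)$\Leftrightarrow$(iii) are handled exactly as in the paper, but your (i)$\Rightarrow$(iii) takes a more piecemeal route.

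\textbf{How the paper argues.} The paper does not split by rationality. It gets $k=l$ for all parameters at once from the ranks $6,8,9,10$ of $K_0(C_{u,k})$, which \cite{ELPW}*{Theorem~0.1} gives for every $u\in\R$. Then, with $t$ irrational, it combines the unique trace on $C_{t,k}$ with the trace range $(\tau_{u,k})_*K_0(C_{u,k})=\frac1k(\Z+u\Z)$, valid for every $u$, to get $\Z+s\Z=\lambda(\Z+t\Z)$. Rank two forces $s\notin\Q$, which settles your mixed case. Comparing the bases $\{1,s\}$ and $\{\lambda,\lambda t\}$ then produces $M$. So the necessary direction of \cite{BCHL}*{Theorem~5.3} is reproved rather than cited.

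\textbf{How your route differs.} You use \cite{BCHL} as a black box for irrational pairs, simplicity versus non-simplicity for mixed pairs, and a separate invariant for rational pairs. That is legitimate. The paper's single trace-range computation buys uniformity over all real parameters and independence from \cite{BCHL}.

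\textbf{The weak point: the rational--rational step.} Your ``continuity/homotopy argument in $t$'' is not a proof as stated, since the $K$-theory of the fibres of a continuous field need not be constant. The fact you want is a genuine theorem, supplied for every $\theta\in\R$ by \cite{ELPW}, and you should simply cite it, as the paper does.

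Better still, you need neither that argument nor the Prim analysis with its twisting worry. You have already shown that $\Q$ is a single $\mathrm{GL}_2(\Z)$-orbit and that (iii)$\Rightarrow$(i) holds. Hence $C_{t,k}\sim_M C_{0,k}=C(\T^2)\rtimes_{\af_{W_k}}\Z_k$ for every rational $t$. It therefore suffices to distinguish these four commutative crossed products. You can do this by $K_0$-rank, or by counting the isotropy irreducibles over singular orbits. Those counts are $8,9,10,11$ for $k=2,3,4,6$, and no projective twist arises at $t=0$.

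The same reduction also shows at once that $C_{t,k}$ is not simple for rational $t$. The centre of $C(\T^2)\rtimes\Z_k$ contains the infinite-dimensional algebra $C(\T^2)^{\Z_k}$.
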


\begin{proof}
\textup{(ii) $\Longleftrightarrow$ (iii).}
For irrational $s,t$, this is \cite{Rieffel1981}*{Theorem~4}.
For $t=p/q\in\Q$ in lowest terms, choose $c,d\in\Z$ with
$pc+qd=1$. Then
\[
 M=\begin{pmatrix}q&-p\\c&d\end{pmatrix}\in\rm{SL}_2(\Z),
 \qquad M\cdot t=0,
\]
and \cite{Li2004}*{Theorem~1.1} gives
$A_t\sim_M A_0=C(\T^2)$. Thus all rational parameters lie in
one orbit and their rotation algebras are Morita equivalent.
If one parameter is rational and the other irrational, they lie in
distinct $\rm{GL}_2(\Z)$-orbits. Their rotation algebras are also
not Morita equivalent, since exactly one is simple and simplicity
is preserved under Morita equivalence.

\smallskip
\noindent\textup{(iii) $\Longrightarrow$ (i).}
Since $k=l$ and $s=M\cdot t$,
Proposition~\ref{prp:two-dimensional-orbit} gives
$C_{s,l}=C_{s,k}\sim_M C_{t,k}$.

\smallskip
\noindent\textup{(i) $\Longrightarrow$ (iii).}
Assume $C_{t,k}\sim_M C_{s,l}$. For every $u\in\R$,
\[
 \rank K_0(C_{u,2})=6,\qquad
 \rank K_0(C_{u,3})=8,\qquad
 \rank K_0(C_{u,4})=9,\qquad
 \rank K_0(C_{u,6})=10
\]
by \cite{ELPW}*{Theorem~0.1}. Morita invariance of $K$-theory
therefore gives $k=l$. If $s,t\in\Q$, the orbit calculation
above proves \textup{(iii)}.

Otherwise, by symmetry assume that $t$ is irrational. Let
$\tau_{u,k}$ denote the canonical tracial state on $C_{u,k}$.
By \cite{ELPW}*{Theorem~0.1}, $C_{t,k}$ has a unique tracial
state. Moreover, \cite{ELPW}*{Theorem~4.9}, together with integral shifts of the parameter, gives, for every $u\in\R$,
\begin{equation}\label{eq:trace-range-two-dimensional}
 (\tau_{u,k})_*\bigl(K_0(C_{u,k})\bigr)
 =\frac{1}{k}(\Z+u\Z).
\end{equation}
The Morita correspondence transports $\tau_{s,k}$ to
$\lambda\tau_{t,k}$ for some $\lambda>0$. Compatibility of
this correspondence with the $K_0$-trace pairing gives
\[
 \Z+s\Z=\lambda(\Z+t\Z).
\]
The right-hand group has $\Z$-rank two, so $s$ is also
irrational. The sets $\{1,s\}$ and $\{\lambda,\lambda t\}$
are therefore $\Z$-bases of the same group. Hence there are
$a,b,c,d\in\Z$ with $ad-bc=\pm1$ such that
\[
 1=\lambda(ct+d),\qquad s=\lambda(at+b).
\]
Consequently $ct+d\neq0$ and
$s=(at+b)/(ct+d)$, proving \textup{(iii)}.
\end{proof}

In particular, for a fixed $k\in\{2,3,4,6\}$,
\[
 A_t\rtimes_{\af_{W_k}}\Z_k\sim_M
 A_s\rtimes_{\af_{W_k}}\Z_k
 \quad\Longleftrightarrow\quad
 A_t\sim_M A_s.
\]
The implication from right to left follows from
Theorem~\ref{thm:main} for both rational and irrational parameters.

\subsection{Morita invariance of the canonical flip}

We now consider the canonical flip action in arbitrary dimension. Recall
that, for $\te\in\mathcal T_n(\R)$, the flip is given by
\[
 \af_\te(U_x^\te)=U_{-x}^\te,\qquad x\in\Z^n,
\]
and we write $B_\te=A_\te\rtimes_{\af_\te}\Z_2$. Since
$(-I_n)^t\te(-I_n)=\te$ for every $\te$, this action is defined for
every noncommutative torus. Moreover, $\rh(-I_n)=-I_{2n}$ is central
in $\rm{SO}(n,n|\Z)$. Thus the conjugacy condition in
Theorem~\ref{thm:main} is automatic for the flip.

\begin{cor}\label{cor:flip-orbit}
Let $n\geq2$, let $\te\in\mathcal T_n(\R)$ and let
$g\in\rm{SO}(n,n|\Z)$ be such that $g\te$ is defined. Then
the hypotheses of Theorem~\ref{thm:main} hold with
$W=P=-I_n$ and $k=2$. Consequently,
\[
 B_{g\te}\sim_M B_\te.
\]
\end{cor}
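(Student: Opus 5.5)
The plan is to verify the hypotheses of Theorem~\ref{thm:main} directly for $W=P=-I_n$ and $k=2$, and then read off the conclusion.

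First, check \eqref{eq:hypW}. Since $\te$ is skew-symmetric, $(-I_n)^t\te(-I_n)=\te$. Clearly $(-I_n)^2=I_n$. The spectrum of $-I_n$ is $\{-1\}$, so $1\notin\operatorname{spec}(-I_n)$.

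Second, check the conjugacy condition \eqref{eq:conjugacy}. Compute
\[
 \rho((-I_n)^t)=\rho(-I_n)=\operatorname{diag}\bigl(-I_n,((-I_n)^t)^{-1}\bigr)=-I_{2n}.
\]
This is a scalar matrix, so it commutes with every $g\in\rm{SO}(n,n|\Z)$. Hence
\[
 g\rho(W^t)g^{-1}=-I_{2n}=\rho(-I_n),
\]
and \eqref{eq:conjugacy} holds with $P=-I_n\in\GL_n(\Z)$.

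Third, apply Theorem~\ref{thm:main}. Since $g\te$ is defined by assumption, the theorem gives
\[
 A_{g\te}\rtimes_{\af_{P^t}}\Z_2\sim_M A_\te\rtimes_{\af_W}\Z_2 .
\]
Here $P^t=-I_n$, so $\af_{P^t}$ is the flip $\af_{g\te}$ on $A_{g\te}$, and $\af_W=\af_\te$. Both crossed products are therefore exactly $B_{g\te}$ and $B_\te$, which proves $B_{g\te}\sim_M B_\te$.

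No step is a genuine obstacle, since the substance lies in Theorem~\ref{thm:main}. The only point needing care is matching the actions: the theorem produces the action $\af_{P^t}$ on the $A_{g\te}$ side, and one must confirm that $P^t=-I_n$ really yields the canonical flip there.
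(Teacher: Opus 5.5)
Your proposal is correct and follows the paper's proof: set $W=P=-I_n$, $k=2$, check \eqref{eq:hypW} directly, use that $\rho(-I_n)=-I_{2n}$ is central in $\mathrm{SO}(n,n|\mathbb{Z})$ to get \eqref{eq:conjugacy}, and then apply Theorem~\ref{thm:main}. One small point: the identity $(-I_n)^t\theta(-I_n)=\theta$ holds for any matrix $\theta$, so you do not need skew-symmetry there.
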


\begin{proof}
Put $W=P=-I_n$ and $k=2$. Then $W^t\te W=\te$, $W^2=I_n$ and
$1\notin\operatorname{spec}(W)$. Also,
\[
 g\rh(W^t)g^{-1}=g(-I_{2n})g^{-1}
 =-I_{2n}=\rh(P).
\]
The result now follows from Theorem~\ref{thm:main}.
\end{proof}

We next combine this corollary with the results of Elliott and
Li~\cite{EL}*{Proposition~3.3}. Recall from \eqref{eq:invariants}
that
\[
 H_\te=\{x\in\Z^n:\te x\in\Z^n\},\qquad
 r_\te=\rank H_\te,\qquad m_\te=n-r_\te.
\]
For $r\geq0$, let $\iota_r$ denote the automorphism of $C(\T^r)$
defined by
\[
 (\iota_r f)(z)=f(z^{-1}),\qquad z\in\T^r,
\]
where inversion is taken in each coordinate. We use the convention
$C(\T^0)=\mathbb C$ and $\iota_0=\operatorname{id}$.

\begin{prp}\label{prp:flip-reduction}
Let $\te\in\mathcal T_n(\R)$, and put $r=r_\te$ and $m=m_\te$.
There exists a nondegenerate $\eta\in\mathcal T_m(\R)$ such that
\[
 A_\te\sim_M C(\T^r)\otimes A_\eta
\]
and
\begin{equation}\label{eq:flip-reduced-crossed-product}
 B_\te\sim_M
 \bigl(C(\T^r)\otimes A_\eta\bigr)
 \rtimes_{\iota_r\otimes\af_\eta}\Z_2.
\end{equation}
When $m=0$, we take $\eta$ to be the empty matrix,
$A_\eta=\mathbb C$ and $\af_\eta=\operatorname{id}$.
\end{prp}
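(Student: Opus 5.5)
Following Elliott and Li's reduction, we first move $\te$ by an element of $\rm{SO}(n,n|\Z)$ to a block-diagonal parameter, and then identify the flip crossed product of that parameter. By \cite{EL}*{Proposition~3.3} (whose proof we retrace) there are $R\in\GL_n(\Z)$ and $g_1\in\rm{SO}(n,n|\Z)$, built from $\rho$'s, $\nu$'s and partial dualities, such that $g_1\te$ is defined and
\[
 g_1\te=\begin{pmatrix}0_r&0\\0&\eta\end{pmatrix}
\]
with $\eta\in\mathcal T_m(\R)$ nondegenerate. Concretely, first apply $\rho(R)$ so that $H_\te$ becomes $\Z^r\oplus0$. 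Then $\te=\begin{psmallmatrix}\te_{11}&\te_{12}\\ \te_{21}&\te_{22}\end{psmallmatrix}$ with $\te_{11},\te_{12}$ integral. Next apply $\nu(N)$ with $N$ integral skew to kill these integral blocks. If the reduction in \cite{EL} uses further dualities, we simply take $g_1$ to be the composite element, and this is all we need. Li's theorem then gives $A_\te\sim_M A_{g_1\te}$, and Corollary~\ref{cor:flip-orbit} gives $B_\te\sim_M B_{g_1\te}$. So it suffices to treat $\te=0_r\oplus\eta$.

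For this parameter, the Weyl presentation splits. Writing $x=(x_1,x_2)\in\Z^r\oplus\Z^m$, we have $\sigma_\te(x,y)=\sigma_\eta(x_2,y_2)$. Hence $U_x^\te\mapsto U_{x_1}^{0}\otimes U_{x_2}^\eta$ gives an isomorphism
\[
A_\te\cong A_{0_r}\otimes A_\eta\cong C(\T^r)\otimes A_\eta,
\]
where $U^0_{e_j}$ goes to the coordinate function $z_j$ via Fourier transform. Under this isomorphism, the flip $U_x^\te\mapsto U_{-x}^\te$ becomes $U_{-x_1}^0\otimes U_{-x_2}^\eta$. On $C(\T^r)$ the map $z^{x_1}\mapsto z^{-x_1}$ is exactly $f\mapsto f(z^{-1})=\iota_r f$, so $\af_\te$ corresponds to $\iota_r\otimes\af_\eta$. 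An equivariant isomorphism induces an isomorphism of crossed products, which yields \eqref{eq:flip-reduced-crossed-product}. Nondegeneracy of $\eta$ follows because $H_{g_1\te}$ must be exactly $\Z^r\oplus0$; this is preserved by the reduction, since $r$ is a Morita/centre invariant ($Z(A_\te)\cong C(\T^{r_\te})$). The case $m=0$ is the degenerate instance where $\eta$ is empty and the tensor factor is $\C$.

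The main obstacle is the first step. We have to check that the Elliott--Li reduction is implemented by a single $g_1\in\rm{SO}(n,n|\Z)$ acting on $\te$ with $g_1\te$ defined, rather than by a chain of abstract Morita equivalences, so that Corollary~\ref{cor:flip-orbit} applies. If the rational part of $\te_{11}$ prevents a direct integral shift, we would handle it first with an element in the special form of Lemma~\ref{special from g}, dualizing only the rational $2p$-block so that it becomes integral. We would then shift by $\nu(N)$. Each stage is an element of $\rm{SO}(n,n|\Z)$ with $g\te$ defined, so Corollary~\ref{cor:flip-orbit} applies to each and the equivalences compose.
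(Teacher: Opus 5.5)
Your argument is correct and is essentially the paper's proof: take $g\in\mathrm{SO}(n,n|\Z)$ from Elliott--Li's Proposition~3.3 with $g\te=0\oplus\eta$, and apply the flip case of Theorem~\ref{thm:main} (Corollary~\ref{cor:flip-orbit}) to get both Morita equivalences. Then use Morita invariance of the centres $C(\T^{r})$ to match the size of the zero block with $r_\te$ (equivalently, to get nondegeneracy of $\eta$), and conclude with the equivariant splitting $A_{0_r\oplus\eta}\cong C(\T^r)\otimes A_\eta$, which carries the flip to $\iota_r\otimes\af_\eta$.

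You should drop your step-by-step retracing of the reduction, because it is not accurate. $H_\te$ need not be a direct summand of $\Z^n$; for $\te=\tfrac12J_0$ one has $H_\te=2\Z^2$. So $\rho(R)$ cannot in general move $H_\te$ to $\Z^r\oplus0$, and after moving its saturation there the blocks $\te_{11},\te_{12}$ are only rational, not integral. The proof therefore rests on the cited proposition, exactly as in the paper.
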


\begin{proof}
By \cite{EL}*{Proposition~3.3}, there exist an integer $q$ with
$0\leq q\leq n$, an element $g\in\rm{SO}(n,n|\Z)$, and a nondegenerate matrix
$\eta\in\mathcal T_{n-q}(\R)$ such that $g\te$ is defined and
\[
 g\te=\begin{pmatrix}0_q&0\\0&\eta\end{pmatrix}.
\]
The hypotheses of Theorem~\ref{thm:main} hold with $W=P=-I_n$
and $k=2$, giving $A_\te\sim_M A_{g\te}$ and
$B_\te\sim_M B_{g\te}$. Since $A_\eta$ is simple, the centre
of $A_{g\te}=C(\T^q)\otimes A_\eta$ is $C(\T^q)$. On the other
hand, the centre of $A_\te$ is $C(\T^r)$. Morita equivalence
preserves the centres of unital algebras, so $q=r$.

The block form of $g\te$ gives a canonical $*$-isomorphism
\[
 A_{g\te}\cong C(\T^r)\otimes A_\eta,
\]
which is easily checked to be $\Z_2$-equivariant for the actions
$\af_{g\te}$ and $\iota_r\otimes\af_\eta$. It therefore induces
an isomorphism of the crossed products. Together with the two
Morita equivalences above, this proves the result.
\end{proof}

\begin{cor}\label{cor:rational-flip}
If $\te\in\mathcal T_n(\Q)$, then
\[
 B_\te\sim_M C(\T^n)\rtimes_{\iota_n}\Z_2.
\]
Thus, for fixed $n$, all flip crossed products associated to rational
parameters are Morita equivalent.
\end{cor}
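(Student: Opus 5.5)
For a rational parameter the whole Morita class should collapse to the commutative torus, so the plan is to apply Proposition~\ref{prp:flip-reduction} and show that the nondegenerate part is trivial. Let $\te\in\mathcal T_n(\Q)$. Choose $N\geq1$ such that $N\te$ is integral. Then $\te(Nx)=N\te x\in\Z^n$ for every $x\in\Z^n$, so $N\Z^n\subseteq H_\te$. Hence $r_\te=n$ and $m_\te=0$.

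Proposition~\ref{prp:flip-reduction} with $m=0$ takes $\eta$ to be the empty matrix, $A_\eta=\C$ and $\af_\eta=\operatorname{id}$. Formula~\eqref{eq:flip-reduced-crossed-product} then reads
\[
 B_\te\sim_M \bigl(C(\T^n)\otimes\C\bigr)\rtimes_{\iota_n\otimes\operatorname{id}}\Z_2
 \cong C(\T^n)\rtimes_{\iota_n}\Z_2 .
\]
This is the claimed equivalence. The second assertion follows by transitivity of $\sim_M$, because the right-hand side depends only on $n$.

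There is no real obstacle once Proposition~\ref{prp:flip-reduction} is available. The only point to check is the degenerate convention $m=0$, where the Elliott--Li form $g\te=\begin{psmallmatrix}0_n&0\\0&\eta\end{psmallmatrix}$ is just $g\te=0$. The identification $A_0=C(\T^n)$ then carries the flip $U_x\mapsto U_{-x}$ to $\iota_n$.

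As a sanity check, and as an independent route, one can avoid the centre argument altogether. Using $\nu(N)$ for an integral skew-symmetric $N$ and suitable elements of $\rm{SO}(n,n|\Z)$, one can directly produce $g$ with $g\te=0$ and then apply Corollary~\ref{cor:flip-orbit}. Proposition~\ref{prp:flip-reduction} already packages this step, so it need not be redone.
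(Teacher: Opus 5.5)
Your proof is correct and matches the paper's argument. Clearing denominators gives a full-rank sublattice of $H_\te$, so $r_\te=n$ and $m_\te=0$, and Proposition~\ref{prp:flip-reduction} with the empty nondegenerate factor then yields $B_\te\sim_M C(\T^n)\rtimes_{\iota_n}\Z_2$; the transitivity step and your alternative route via Corollary~\ref{cor:flip-orbit} are fine but not needed.
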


\begin{proof}
Choose a positive integer $d$ such that $d\te$ has integral entries.
Then $d\Z^n\subseteq H_\te$, so $r_\te=n$ and $m_\te=0$.
The result follows from Proposition~\ref{prp:flip-reduction}.
\end{proof}

The reduction in Proposition~\ref{prp:flip-reduction} will be used
in the next subsection to recover the Morita equivalence class of
the underlying torus from that of its flip crossed product.

\subsection{Descent from flip crossed products}

We show that Morita equivalence of flip crossed products implies
Morita equivalence of the underlying tori. Using
Proposition~\ref{prp:flip-reduction}, we recover the dimension of the
commutative torus factor and the Morita equivalence class of the
simple factor.

\begin{thm}\label{thm:flip-descent}
Let $n\geq2$ and let $\te,\te'\in\mathcal T_n(\R)$. If $B_\te\sim_M B_{\te'},$
then $A_\te\sim_M A_{\te'}.$
\end{thm}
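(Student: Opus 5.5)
We reduce everything to the model $\bigl(C(\T^r)\otimes A_\eta\bigr)\rtimes_{\iota_r\otimes\af_\eta}\Z_2$ via Proposition~\ref{prp:flip-reduction}. By Elliott--Li's reduction, it suffices to show that $B_\te\sim_M B_{\te'}$ forces $r_\te=r_{\te'}$ and $A_\eta\sim_M A_{\eta'}$, where $\eta,\eta'$ are the nondegenerate parameters attached to $\te,\te'$. Indeed, then
\[
A_\te\sim_M C(\T^r)\otimes A_\eta\sim_M C(\T^r)\otimes A_{\eta'}\sim_M A_{\te'}.
\]
So write $D_\te=\bigl(C(\T^r)\otimes A_\eta\bigr)\rtimes\Z_2$, and similarly $D_{\te'}$ with $r'$ and $\eta'$. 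Then $D_\te\sim_M D_{\te'}$.

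\textbf{Step 1: recover $r$ from the primitive ideal space.} Morita equivalent algebras have homeomorphic primitive ideal spaces, with Rieffel's correspondence. The centre $C(\T^r)\otimes 1$ is fixed by neither generator, but the fixed central subalgebra $C(\T^r)^{\iota_r}=C(\T^r/\pm)$ is central in $D_\te$, since the unitary implementing the flip commutes with it. Hence $\operatorname{Prim}(D_\te)$ fibres continuously over the orbifold $\T^r/\{\pm1\}$, and this fibration is canonical via the Dauns--Hofmann map.

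The fibres are as follows. Over a non-fixed orbit $\{z,z^{-1}\}$ with $z\neq z^{-1}$, the fibre is the crossed product of $A_\eta\oplus A_\eta$ by the swap, which is $M_2(A_\eta)$ and hence simple: one point. Over each of the $2^r$ fixed points $z\in\{\pm1\}^r$, the fibre is $A_\eta\rtimes_{\af_\eta}\Z_2$, which is simple when $m\geq1$ (the flip is outer on a simple torus, since $-I_m$ fixes no nonzero lattice point). When $m=0$ the fibre is $\C^2$, giving two points.

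Hence $\operatorname{Prim}(D_\te)$ is $\T^r/\pm$, possibly with the $2^r$ special points doubled. Its covering dimension is $r$, which is invariant under homeomorphism, so $r=r'$. Alternatively, one can compare the centres of the multiplier algebras. Note also that $m=0$ if and only if the space is non-Hausdorff when $r\geq1$; the case $r=0$ is handled by simplicity.

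\textbf{Step 2: recover $A_\eta$ up to Morita equivalence.} Here I would localize at a fixed point. A Morita equivalence induces a homeomorphism of $\operatorname{Prim}$ and, for each closed subset, a Morita equivalence of the corresponding quotients. The fixed points are intrinsically characterized in $\operatorname{Prim}$ as the singular points: when $r\geq 2$ they are the points where $\T^r/\pm$ is not a manifold. When $r=1$, $\T/\pm$ is an interval and the fixed points are its endpoints. When $m=0$ they are the doubled points.

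The quotient at a fixed point is $A_\eta\rtimes\Z_2=B_\eta$, and at a generic point it is $M_2(A_\eta)\sim_M A_\eta$. The generic fibres are the simplest to use: the homeomorphism sends a generic point to a generic point, so the corresponding simple quotients are Morita equivalent. This gives $A_\eta\sim_M A_{\eta'}$ directly whenever $r\geq1$.

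When $r=0$, we have $B_\eta\sim_M B_{\eta'}$ with $\eta,\eta'$ nondegenerate, and \cite{ChTrace}*{Corollary~5.11} gives $A_\eta\sim_M A_{\eta'}$. When $m=0$, both tori have $\eta$ empty and we are done once $r=r'$.

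\textbf{Main obstacle.} The delicate point is justifying that the homeomorphism of primitive ideal spaces respects the identification with the orbifold: that is, that generic points correspond to generic points and that the fibre quotients match under the Rieffel correspondence. I would handle this with the Rieffel correspondence applied to the ideal structure. The quotient of $D_\te$ at a primitive ideal is a simple algebra, and it is Morita equivalent to the quotient at the matching ideal of $D_{\te'}$. To make "generic" intrinsic, I would use that the generic points form a dense open subset homeomorphic to an $r$-manifold minus nothing special, namely $(\T^r\setminus\{\pm1\}^r)/\pm$. These points are exactly those having a Euclidean neighbourhood with Hausdorff behaviour, so any homeomorphism preserves them. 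The case $r=1$, where the orbifold points are boundary points rather than singular points, needs a separate but easy check.
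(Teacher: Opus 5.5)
Your overall strategy is the paper's: reduce to $D_{r,\eta}=(C(\T^r)\otimes A_\eta)\rtimes\Z_2$, recover $r$ from the central structure over $X_r=\T^r/(z\sim z^{-1})$, and handle $r=0$ by \cite{ChTrace}*{Corollary~5.11}. For $r\geq1$ you then compare the quotients $M_2(A_\eta)$ at free orbits via the Rieffel correspondence.

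\textbf{The gap.} The step you flag as the main obstacle is resolved incorrectly. You claim that, for $r\geq2$, the images of the fixed points $\{\pm1\}^r$ are intrinsically the non-manifold points of $X_r$. This fails for $r=2$. Locally $\R^2/\{\pm1\}\cong\C/\{\pm1\}\cong\C$ via $z\mapsto z^2$, so $X_2$ is a closed surface (the pillowcase, homeomorphic to $S^2$). Its four special points therefore have Euclidean neighbourhoods. There are homeomorphisms of $X_2$ moving a special point to a generic one, so no topological argument can show that the homeomorphism $h$ induced by the Morita equivalence sends free orbits to free orbits. A priori the Rieffel correspondence could pair $M_2(A_\eta)$ with $B_{\eta'}$ instead of $M_2(A_{\eta'})$. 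This case really occurs, e.g.\ $n=4$ with $r=m=2$.

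\textbf{The fix.} The paper avoids any intrinsic characterization. The set $q_r(F_r)$ of special points is finite and $X_r$ is infinite, so you can choose $x\notin q_r(F_r)\cup h^{-1}(q_r(F_r))$. Then both $x$ and $h(x)$ are free orbits, and the Rieffel correspondence gives $M_2(A_\eta)\sim_M M_2(A_{\eta'})$ directly. Alternatively, you could separate the two kinds of fibre by $K_1$: $K_1(M_2(A_\eta))\cong\Z^{2^{m-1}}\neq0$, whereas $B_\eta$ is AF.

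\textbf{On Step 1.} The paper works with the centre rather than $\operatorname{Prim}$. It shows $Z(D_{r,\eta})=C(X_r)$ exactly, whereas you only observed the inclusion. It then proves $\dim X_r=r$ by a countable closed-sum argument. This sidesteps the covering dimension of the non-Hausdorff $\operatorname{Prim}$ when $m=0$, and needs no information about the fibres over the fixed points.
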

\begin{proof}
Put $r=r_\te$ and $r'=r_{\te'}$. By
Proposition~\ref{prp:flip-reduction}, there are nondegenerate matrices
$\eta\in\mathcal T_{n-r}(\R)$ and
$\eta'\in\mathcal T_{n-r'}(\R)$ such that
\[
 A_\te\sim_M C(\T^r)\otimes A_\eta,
 \qquad
 A_{\te'}\sim_M C(\T^{r'})\otimes A_{\eta'}.
\]
Moreover, $B_\te\sim_M D_{r,\eta}$ and $B_{\te'}\sim_M D_{r',\eta'}$, where
\begin{equation}\label{eq:definition-reduced-flip}
 D_{r,\eta}
 =\bigl(C(\T^r)\otimes A_\eta\bigr)
   \rtimes_{\iota_r\otimes\af_\eta}\Z_2.
\end{equation}
Hence $D_{r,\eta}\sim_M D_{r',\eta'}$, and it suffices to prove
$r=r'$ and $A_\eta\sim_M A_{\eta'}$. We retain the convention
$A_\eta=\mathbb C$ and $\af_\eta=\operatorname{id}$ when $n-r=0$,
and similarly for $\eta'$.

We first determine the centre of $D_{r,\eta}$. For $r>0$, set
\[
 X_r=\T^r/(z\sim z^{-1}),\qquad
 F_r=\{z\in\T^r:z=z^{-1}\}=\{1,-1\}^r.
\]
Let $q_r:\T^r\to X_r$ be the quotient map and $v$ the unitary
implementing the flip. Every element of $D_{r,\eta}$ has a unique
expression $a_0+a_1v$ with $a_i\in C(\T^r,A_\eta)$.
If it is central, commutation with $f\in C(\T^r)$ gives
\[
 \bigl(f(z^{-1})-f(z)\bigr)a_1(z)=0
 \qquad(z\in\T^r).
\]
Since scalar functions separate points, $a_1$ vanishes on the dense
set $\T^r\setminus F_r$, hence everywhere. As $Z(A_\eta)=\mathbb C1$,
commutation with constant $A_\eta$-valued functions forces
$a_0=f\otimes1$ for some $f\in C(\T^r)$, and commutation with $v$
then gives $f(z)=f(z^{-1})$.
Conversely, these invariant scalar functions are central, so
\begin{equation}\label{eq:flip-reduced-centre}
 Z(D_{r,\eta})=C(\T^r)^{\Z_2}\cong C(X_r).
\end{equation}
For $r=0$, $D_{0,\eta}=B_\eta$ is simple by
\cite{ELPW}*{Theorem~6.6 and its proof}, so the same formula holds
with $X_0$ a one-point space.

We next compute $\dim X_r$. For $r>0$, every point of
$\T^r\setminus F_r$ has a closed coordinate cube neighbourhood
disjoint from its image under inversion. By second countability,
choose such cubes $K_1,K_2,\ldots$ whose interiors cover
$\T^r\setminus F_r$. Each restriction $q_r|_{K_j}$ is a homeomorphism
onto a closed copy of $[0,1]^r$ in the compact metrizable space $X_r$.
Thus
\[
 X_r=q_r(F_r)\cup\bigcup_{j=1}^{\infty}q_r(K_j)
\]
is a countable closed cover. Each $q_r(K_j)$ has covering dimension
$r$ by \cite{Engelking1978}*{Corollary~1.8.3}, and $q_r(F_r)$ is finite.
The countable sum theorem gives $\dim X_r\leq r$, while the closed
subspace theorem applied to $q_r(K_1)$ gives the reverse inequality
\cite{Engelking1978}*{Theorems~3.1.8 and 3.1.4}.
Hence $\dim X_r=r$, also for $r=0$.
Morita invariance of the centres and \eqref{eq:flip-reduced-centre}
give $X_r\cong X_{r'}$, so $r=r'$.

If $r=0$, both $\te$ and $\te'$ are nondegenerate, and the conclusion
follows from \cite{ChTrace}*{Corollary~5.11}. Assume henceforth that $r>0$.

To identify the quotients over free orbits, let
\[
 J_x=\{f\in C(X_r):f(x)=0\},\qquad
 I_x=\overline{J_xD_{r,\eta}}
 \qquad(x\in X_r),
\]
where $C(X_r)$ is identified with the centre. For $x=q_r(z)$ with
$z\ne z^{-1}$, define $\pi_z:D_{r,\eta}\to M_2(A_\eta)$ by
\[
 \pi_z(a_0+a_1v)=
 \begin{pmatrix}
  a_0(z)&a_1(z)\\
  \af_\eta(a_1(z^{-1}))&\af_\eta(a_0(z^{-1}))
 \end{pmatrix}.
\]
The assignments
\[
 a\longmapsto\operatorname{diag}(a(z),\af_\eta(a(z^{-1}))),
 \qquad
 v\longmapsto\begin{pmatrix}0&1\\1&0\end{pmatrix}
\]
satisfy covariance, so $\pi_z$ is a $*$-homomorphism.
This map is also surjective. For if $b=(b_{ij})\in M_2(A_\eta)$,
choose $f\in C(\T^r,[0,1])$ with $f(z)=1$ and $f(z^{-1})=0$.
Then the coefficient functions
\[
 a_0=f b_{11}+(1-f)\af_\eta(b_{22}),\qquad
 a_1=f b_{12}+(1-f)\af_\eta(b_{21})
\]
satisfy $\pi_z(a_0+a_1v)=b$, since $\af_\eta^2=\operatorname{id}$.

We now show that $\ker\pi_z=I_x$. Since $\pi_z$ vanishes on $J_x$,
we have $I_x\subseteq\ker\pi_z$. Conversely, let
$a_0+a_1v\in\ker\pi_z$. Then both coefficients vanish at $z$ and
$z^{-1}$. Choose a compatible metric $d$ on $X_r$ and set
\[
 f_j(y)=\min\{1,jd(y,x)\},\qquad f_j\in J_x.
\]
Since $a_i$ vanishes on $q_r^{-1}(x)$, continuity and compactness give
\[
 (f_j\circ q_r)a_i\longrightarrow a_i
 \quad\text{uniformly on }\T^r,\qquad i=0,1.
\]
Thus $f_j(a_0+a_1v)\to a_0+a_1v$ in norm, with each term in $I_x$.
Hence $\ker\pi_z=I_x$ and
\begin{equation}\label{eq:flip-free-fibre}
 D_{r,\eta}/I_x\cong M_2(A_\eta)
 \qquad\bigl(x\in X_r\setminus q_r(F_r)\bigr).
\end{equation}

To compare these quotients, let $\mathcal E$ be a
$D_{r,\eta}$--$D_{r,\eta'}$ imprimitivity bimodule. Its isomorphism
of centres $\Phi$ and the induced homeomorphism $h:X_r\to X_r$ satisfy
\[
 f\xi=\xi\Phi(f),\qquad
 \Phi(f)(h(x))=f(x)
 \qquad(f\in C(X_r),\ \xi\in\mathcal E).
\]
Writing $J'_y$ and $I'_y$ for the corresponding ideals on the other
side, we have $\Phi(J_x)=J'_{h(x)}$ and
\[
 \overline{I_x\mathcal E}
 =\overline{\mathcal E I'_{h(x)}}.
\]
By the Rieffel correspondence
\cite{RW1998}*{Theorem~3.22 and Proposition~3.25},
$\mathcal E/\overline{I_x\mathcal E}$ implements
\[
 D_{r,\eta}/I_x\sim_M D_{r,\eta'}/I'_{h(x)}.
\]

Since $X_r$ is infinite and $q_r(F_r)$ is finite, choose
\[
 x\notin q_r(F_r)\cup h^{-1}(q_r(F_r)).
\]
Both $x$ and $h(x)$ represent free orbits, so \eqref{eq:flip-free-fibre}
gives
\[
 M_2(A_\eta)\sim_M M_2(A_{\eta'}),
 \qquad\text{and hence}\qquad
 A_\eta\sim_M A_{\eta'}.
\]
Tensoring the latter equivalence with $C(\T^r)$ gives
\[
 A_\te
 \sim_M C(\T^r)\otimes A_\eta
 \sim_M C(\T^r)\otimes A_{\eta'}
 \sim_M A_{\te'}.
\]
\end{proof}

Theorem~\ref{thm:flip-descent} gives the following necessary conditions
in terms of the invariants in \eqref{eq:invariants}.

\begin{cor}\label{cor:flip-necessary-invariants}
Let $n\geq2$ and let $\te,\te'\in\mathcal T_n(\R)$.
If $B_\te\sim_M B_{\te'}$, then
\[
 r_\te=r_{\te'},\qquad
 \mathcal R_{\te'}=\lambda\mathcal R_\te
 \quad\text{for some }\lambda>0.
\]
\end{cor}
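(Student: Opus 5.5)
By Theorem~\ref{thm:flip-descent}, the hypothesis $B_\te\sim_M B_{\te'}$ gives $A_\te\sim_M A_{\te'}$. The proof therefore reduces to showing that the two invariants in \eqref{eq:invariants} are Morita invariants of the underlying tori. I will treat the central rank first and the trace range second.

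\emph{Central rank.} As recalled in the introduction, the centre of $A_\te$ is isomorphic to $C(\T^{r_\te})$. Morita equivalent unital $\mathrm{C}^*$-algebras have isomorphic centres; this is the same fact used in the proof of Proposition~\ref{prp:flip-reduction}. Hence $C(\T^{r_\te})\cong C(\T^{r_{\te'}})$, so $\T^{r_\te}$ and $\T^{r_{\te'}}$ are homeomorphic. Covering dimension, or equivalently the rank of $H^1$, then gives $r_\te=r_{\te'}$.

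\emph{Trace range.} Let $\mathcal E$ be an $A_\te$--$A_{\te'}$ imprimitivity bimodule. It induces an isomorphism of ordered groups $K_0(A_\te)\cong K_0(A_{\te'})$ and a bijection between the cones of positive traces. Under this bijection, a trace $\tau'$ on $A_{\te'}$ corresponds to a trace $\tau$ on $A_\te$ with $\tau_*=\tau'_*\circ\mathcal E_*$ on $K_0$. The main obstacle is that for degenerate $\te$ the algebra $A_\te$ has many tracial states, so the canonical trace need not be transported to a multiple of the canonical one. I will circumvent this in two steps.

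\emph{Step 1: every tracial state has the same $K_0$-image.} For any tracial state $\tau$ on $A_\te$, I claim $\tau_*=(\tau_\te)_*$ on $K_0(A_\te)$. Every tracial state is a limit of convex combinations of the traces $\tau_\te\circ\gamma_z$, where $\gamma_z$ ranges over the gauge automorphisms $U_x\mapsto z^xU_x$ for $z\in\T^n$. Since $\gamma_z$ is homotopic to the identity, it acts trivially on $K_0$. Hence $\tau_*=(\tau_\te)_*$ for every tracial state. Alternatively, one can avoid describing all traces by an Elliott--Li style argument, using $A_\te\sim_M C(\T^r)\otimes A_\eta$ and the fact that $K_0$-images of traces are determined by the Chern character, whose degree-zero part is canonical. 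Either way, $(\tau_\te)_*K_0(A_\te)=\mathcal R_\te$ does not depend on the choice of tracial state.

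\emph{Step 2: transport through $\mathcal E$.} Apply $\mathcal E$ to the canonical tracial state on $A_{\te'}$. This produces a positive trace on $A_\te$, which is $\lambda$ times a tracial state for some $\lambda>0$. Comparing images and using Step~1, we obtain $\mathcal R_{\te'}=\lambda\mathcal R_\te$ for this $\lambda>0$, which is the second invariant.
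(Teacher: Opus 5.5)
Your reduction to $A_\te\sim_M A_{\te'}$ via Theorem~\ref{thm:flip-descent} and your central-rank argument match the paper. For the trace range the paper only notes that the ordered $K_0$-groups are isomorphic and cites \cite{EL}*{Proposition~4.1}. You instead transport a trace through the imprimitivity bimodule. That is a legitimate and more self-contained route, but Step~1 as written is false.

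\textbf{The problem in Step~1.} The canonical trace is gauge invariant: $\tau_\te(\gamma_z(U_x))=z^x\delta_{x,0}=\delta_{x,0}$. Hence $\tau_\te\circ\gamma_z=\tau_\te$ for every $z\in\T^n$, and the closed convex hull of these traces is just $\{\tau_\te\}$. For degenerate $\te$ there are other tracial states; for $\te=0$, every point evaluation on $C(\T^n)$ is one. So these states are not limits of convex combinations of gauge translates of $\tau_\te$, and your conclusion $\tau_*=(\tau_\te)_*$ does not follow from what you wrote. The Chern-character alternative is too vague to fill this in.

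\textbf{The repair.} Average in the opposite direction. Checking on the generators $U_x$ gives $\int_{\T^n}\gamma_z(a)\,dz=\tau_\te(a)1$, so every tracial state $\tau$ satisfies $\int_{\T^n}\tau\circ\gamma_z\,dz=\tau_\te$. The same holds on $M_k(A_\te)$ after tensoring with $\operatorname{Tr}$. Now take a projection $p\in M_k(A_\te)$. The map $z\mapsto\gamma_z(p)$ is norm continuous, and projections at distance less than $1$ are unitarily equivalent. So $z\mapsto\tau(\gamma_z(p))$ is locally constant on the connected space $\T^n$, hence identically $\tau(p)$. Integrating gives $\tau_\te(p)=\tau(p)$, which is exactly the statement of your Step~1.

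\textbf{Step~2 then works.} The trace $\tau$ on $A_\te$ induced from $\tau_{\te'}$ satisfies $\tau_*=(\tau_{\te'})_*\circ\mathcal E_*$ with $\mathcal E_*$ a group isomorphism, so $\tau_*K_0(A_\te)=\mathcal R_{\te'}$. Writing $\tau=\lambda\tau_1$ with $\tau_1$ a tracial state, the repaired Step~1 gives $\tau_*K_0(A_\te)=\lambda\mathcal R_\te$.

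\textbf{Comparison with the paper.} Your route uses the bimodule itself rather than only the abstract ordered group. It therefore needs no information about the positive cone of $K_0(A_\te)$ for degenerate $\te$, which is precisely the point the paper hands off to Elliott--Li. The paper's argument is shorter, but it rests entirely on that citation.
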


\begin{proof}
By Theorem~\ref{thm:flip-descent}, $A_\te\sim_M A_{\te'}$.
The centres are therefore isomorphic, giving $r_\te=r_{\te'}$.
The ordered $K_0$-groups are also isomorphic, so
\cite{EL}*{Proposition~4.1} gives the asserted equality of trace
ranges up to a positive scalar.
\end{proof}

\subsection{Classification of flip crossed products}

We now prove that the necessary conditions in
Corollary~\ref{cor:flip-necessary-invariants} are sufficient when
$r_\te\leq1$ or $m_\te\in\{0,2\}$. These cases include every
$\theta$ in dimensions at most four. We retain the notation
$D_{r,\eta}$ from \eqref{eq:definition-reduced-flip}.

The case $r_\te=1$ requires some preparation. For a nondegenerate
$\eta\in\mathcal T_m(\R)$, let
\begin{equation}\label{eq:regular-flip-inclusion}
 j_\eta:B_\eta\longrightarrow M_2(A_\eta),\qquad
 j_\eta(a+bv)=
 \begin{pmatrix}a&b\\\af_\eta(b)&\af_\eta(a)\end{pmatrix},
\end{equation}
where $v$ implements the flip. This is the regular representation
of the crossed product. In particular, it is injective.

\begin{lem}\label{lem:regular-flip-K0}
Let $m\geq2$ and let $\eta\in\mathcal T_m(\R)$ be nondegenerate.
Under the standard matrix Morita identification
$K_0(M_2(A_\eta))\cong K_0(A_\eta)$, the regular inclusion induces
a surjective homomorphism
$(j_\eta)_*:K_0(B_\eta)\longrightarrow K_0(A_\eta)$. Moreover,
\[
 K_0(B_\eta)\cong\Z^{3\cdot2^{m-1}},\qquad
 K_0(A_\eta)\cong\Z^{2^{m-1}},\qquad
 \ker(j_\eta)_*\cong\Z^{2^m}.
\]
Let $\tau_\eta$ and $\widehat\tau_\eta$ be the canonical tracial
states on $A_\eta$ and $B_\eta$, respectively, and put
$\tau_\eta^{(2)}=\frac12\operatorname{Tr}_2\otimes\tau_\eta$,
where $\operatorname{Tr}_2$ is the unnormalized matrix trace.
Then
\[
 \tau_\eta^{(2)}\circ j_\eta=\widehat\tau_\eta.
\]
\end{lem}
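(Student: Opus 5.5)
The proof has three parts: the trace identity, the ranks, and surjectivity of $(j_\eta)_*$. The first and second follow from direct computation and cited results, and the third is deduced from them.

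\emph{Trace identity.} Every element of $B_\eta$ can be written as $a+bv$ with $a,b\in A_\eta$. The canonical trace on the crossed product is $\widehat\tau_\eta(a+bv)=\tau_\eta(a)$. On the other side,
\[
 \tau_\eta^{(2)}(j_\eta(a+bv))=\tfrac12\bigl(\tau_\eta(a)+\tau_\eta(\af_\eta(a))\bigr).
\]
This equals $\tau_\eta(a)$ because the canonical trace is flip-invariant: $\tau_\eta(U_{-x}^\eta)=\delta_{x,0}=\tau_\eta(U_x^\eta)$. So $\tau_\eta^{(2)}\circ j_\eta=\widehat\tau_\eta$. Injectivity of $j_\eta$ is the standard faithfulness of the regular representation for finite groups.

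\emph{Ranks.} We have $K_0(A_\eta)\cong\Z^{2^{m-1}}$ by the Pimsner--Voiculescu/Elliott computation. For the crossed product, I would cite \cite{ELPW}*{Theorem~0.1}, or its $n$-dimensional version for the flip, which gives $K_0(B_\eta)\cong\Z^{3\cdot2^{m-1}}$ and $K_1(B_\eta)=0$. Once surjectivity is proved, $\ker(j_\eta)_*$ is a subgroup of a free group of rank $3\cdot2^{m-1}-2^{m-1}=2^m$, so it is free of that rank.

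\emph{Surjectivity.} This is the main step. Under $K_0(M_2(A_\eta))\cong K_0(A_\eta)$, the map $(j_\eta)_*$ coincides with the restriction map $\operatorname{Res}:K_0(A_\eta\rtimes\Z_2)\to K_0(A_\eta)$, since $j_\eta$ is the regular representation. By Green--Julg, $K_0(B_\eta)\cong K_0^{\Z_2}(A_\eta)$, and $\operatorname{Res}$ forgets the equivariant structure. I see two ways to show this is onto.

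The first is to lift generators. Elliott's theorem identifies $K_0(A_\eta)$ with $\Lambda^{\mathrm{even}}\Z^m$ through the Chern character. The group $\Z_2$ acts by $-1$ on $\Lambda^1$, hence trivially on $\Lambda^{\mathrm{even}}$. So every class is flip-invariant, and one needs it to be representable by a $\Z_2$-equivariant module. The class $[1]$ lifts, using the trivial equivariant structure. For classes involving pfaffians of submatrices $\eta_I$, one would use Heisenberg modules over the subtori indexed by $I$. These carry flip-equivariant structures, for instance by Corollary~\ref{cor:flip-orbit} or the construction of Theorem~\ref{thm:main} with $W=-I$. Inducing these to $A_\eta$ gives equivariant lifts.

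The second way is shorter. Compose $(j_\eta)_*$ with the inclusion-induced map $\iota_*:K_0(A_\eta)\to K_0(B_\eta)$. The composite $(j_\eta)_*\circ\iota_*$ equals $1+(\af_\eta)_*=2$ on $K_0(A_\eta)$, since the flip acts trivially on $K_0$. This shows the image has finite index but does not by itself give surjectivity.

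I would therefore combine this with a trace/Chern-character argument, or rely directly on the explicit description in \cite{ELPW} of generators of $K_0(B_\eta)$ as equivariant projective modules lifting a basis of $K_0(A_\eta)$. The expected obstacle is exactly this integrality: ruling out an index-$2$ image. I would settle it by exhibiting, for each basis element of $\Lambda^{\mathrm{even}}\Z^m$, an equivariant Rieffel projective module, whose existence follows from the equivariant bimodules constructed in Section~\ref{sec:equivariant-bimodule}.
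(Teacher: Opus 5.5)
Your outline matches the paper's proof in its main steps. You identify $(j_\eta)_*$ with restriction of scalars and lift $[1_{A_\eta}]$; your trivial equivariant structure is the paper's projection $\frac{1+v}{2}$. You lift the remaining generators by flip-equivariant Heisenberg-type modules, and you handle the ranks, the split sequence and the trace identity as the paper does. Two side remarks. Your composite $(j_\eta)_*\circ\iota_*=2$ already shows the image has full rank, so $\ker(j_\eta)_*\cong\Z^{2^m}$ does not actually need surjectivity. For the rank of $K_0(B_\eta)$ when $m\geq3$, cite the $m$-dimensional computation, as the paper does with \cite{ChSym}*{Corollary~4.10}.

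\textbf{The gap.} The step that fails as written is the lifting of the classes attached to $I$. For $|I|=2k$, you use Rieffel's module $\mathcal S(\R^k)$ over $A_{\eta_I}$, made equivariant by Theorem~\ref{thm:main} with the full-duality $g$ and $W=P=-I_{2k}$. That module exists only when $\eta_I$ is invertible, i.e.\ $\pf(\eta_I)\neq0$, and nondegeneracy of $\eta$ does not guarantee this. For example, $\eta=\operatorname{diag}(tJ_0,tJ_0)$ with $t$ irrational is nondegenerate, but $\eta_{\{1,3\}}=0$. So $A_{\eta_{\{1,3\}}}=C(\T^2)$, and your recipe produces nothing for $e_1\wedge e_3$.

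\textbf{How to fix it.} The missing idea is the paper's preliminary reduction. Replace $\eta$ by $\eta+N$, with $N$ integral skew-symmetric and $\pf((\eta+N)_I)\neq0$ for every nonempty even $I$. Such $N$ exists because each $\pf((\eta+X)_I)$ is a nonzero polynomial in the entries of $X$, and a nonzero polynomial cannot vanish at all integer points. The isomorphism $U_x^\eta\mapsto\epsilon_N(x)U_x^{\eta+N}$ commutes with the flips, since $\epsilon_N(-x)=\epsilon_N(x)$. It therefore carries $B_\eta$, $j_\eta$ and $(j_\eta)_*$ to the corresponding objects for $\eta+N$.

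\textbf{The generation step.} The induced modules do not lift the basis vectors $e_I$ of $\Lambda^{\mathrm{even}}\Z^m$ themselves. Their classes are $\pm e_I$ plus terms of lower degree. This holds because the top Chern number of a Heisenberg module is $\pm1$ and the Chern character is compatible with subtorus inclusions. It is this unitriangularity for the degree filtration that makes $[1]$ together with these classes a $\Z$-basis. Trace values cannot replace this argument, since $\tau_\eta$ need not be injective on $K_0(A_\eta)$; it is not injective for the example above.

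\textbf{Comparison.} With these two additions your route is valid. It differs from the paper only in the choice of generators. The paper cites \cite{ChSym}*{Theorem~5.7} for generation by $[1]$ and fundamental projective modules over the whole torus. It makes those modules equivariant with the explicit involution $f(x,t)\mapsto f(-x,-t)$ and \cite{ChSym}*{Proposition~4.1}, rather than inducing equivariant modules from subtori.
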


\begin{proof}
We first identify $(j_\eta)_*$ with restriction of scalars.
As a right $A_\eta$-module, $B_\eta$ is free with basis
$\{1,v\}$, and left multiplication in this basis gives
\eqref{eq:regular-flip-inclusion}. Thus, for a projection
$p\in M_k(B_\eta)$, the projection $j_\eta^{(k)}(p)$ has range
$pB_\eta^k$, viewed as a right $A_\eta$-module. Consequently,
$(j_\eta)_*$ sends the class of a finitely generated projective
right $B_\eta$-module to the class of its restriction to $A_\eta$.
In particular, $(j_\eta)_*([1_{B_\eta}])=2[1_{A_\eta}]$,
since $B_\eta$ is free of rank two as a right $A_\eta$-module.

To prove surjectivity, we may first replace $\eta$ by a suitable
integral skew-symmetric translate. Indeed, for an integral
skew-symmetric matrix $N$, the isomorphism
$A_\eta\longrightarrow A_{\eta+N}$ in the Weyl presentation is
\[
 U_x^\eta\longmapsto\epsilon_N(x)U_x^{\eta+N},\qquad
 \epsilon_N(x)=(-1)^{\sum_{i<j}N_{ij}x_ix_j}.
\]
Since $\epsilon_N(-x)=\epsilon_N(x)$, this isomorphism intertwines
the flips. It therefore extends to the crossed products and
intertwines their regular representations. Surjectivity is
therefore preserved under these identifications.

By \cite{ChSym}*{Theorem~5.7}, we may now assume that
$K_0(A_\eta)$ is generated by $[1_{A_\eta}]$ and the classes
of the fundamental projective modules. Each such module $E$
admits the involution
\[
 (Tf)(x,t)=f(-x,-t),\qquad (x,t)\in\R^p\times\Z^{m-2p},
\]
on its dense Schwartz space. The covariance identities in
\cite{ChSym}*{equations~(4.2)--(4.4)} show that $T$ extends to
$E$ and is compatible with the flip on $A_\eta$. By
\cite{ChSym}*{Proposition~4.1}, this equips $E$ with a finitely
generated projective right $B_\eta$-module structure whose
restriction to $A_\eta$ is the original module. Hence $[E]$
lies in the image of $(j_\eta)_*$. The unit class also lies
in the image, since
\[
 j_\eta\left(\frac{1+v}{2}\right)
 =\frac12\begin{pmatrix}1&1\\1&1\end{pmatrix},
 \qquad
 (j_\eta)_*\left[\frac{1+v}{2}\right]=[1_{A_\eta}].
\]
Indeed, the constant unitary
$2^{-1/2}\begin{pmatrix}
 1 & 1 \\
 1 & -1
 \end{pmatrix}$
conjugates this projection to $\operatorname{diag}(1,0)$.
Thus $(j_\eta)_*$ is surjective.

The standard $K$-theory computation for noncommutative tori gives
$K_0(A_\eta)\cong\Z^{2^{m-1}}$, while
\cite{ChSym}*{Corollary~4.10} gives
$K_0(B_\eta)\cong\Z^{3\cdot2^{m-1}}$. By surjectivity, we have
the short exact sequence of abelian groups
\[
 0\longrightarrow\ker(j_\eta)_*
 \longrightarrow K_0(B_\eta)
 \xrightarrow{(j_\eta)_*}K_0(A_\eta)
 \longrightarrow0.
\]
Since $K_0(A_\eta)$ is free abelian, this sequence splits.
Its kernel is therefore free of
rank $3\cdot2^{m-1}-2^{m-1}=2^m$.

Finally, since $\tau_\eta\circ\af_\eta=\tau_\eta$, the matrix
formula \eqref{eq:regular-flip-inclusion} gives
\[
 \tau_\eta^{(2)}(j_\eta(a+bv))
 =\frac12\bigl(\tau_\eta(a)+\tau_\eta(\af_\eta(a))\bigr)
 =\tau_\eta(a)=\widehat\tau_\eta(a+bv).
\]
\end{proof}

\begin{prp}\label{prp:one-dimensional-centre}
Let $\eta,\eta'\in\mathcal T_m(\R)$ be nondegenerate, where
$m\geq2$. If $A_\eta\sim_M A_{\eta'}$, then
\[
 D_{1,\eta}\sim_M D_{1,\eta'}.
\]
\end{prp}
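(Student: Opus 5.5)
The plan is to describe $D_{1,\eta}$ as a pullback ("gluing") algebra over the interval $X_1=\T/(z\sim z^{-1})\cong[0,1]$. We then build a Morita equivalence $D_{1,\eta}\sim_M D_{1,\eta'}$ by gluing a Morita equivalence of the interior fibres to Morita equivalences of the two endpoint fibres. This is only a plan; the step flagged below as the main obstacle is not resolved here, only a strategy for it is proposed.

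\emph{Step 1: fibre description.} As in the proof of Theorem~\ref{thm:flip-descent}, write elements of $D_{1,\eta}$ as $a_0+a_1v$ with $a_i\in C(\T,A_\eta)$.
\begin{itemize}
\item For $z\neq z^{-1}$, the maps $\pi_z$ identify the fibre with $M_2(A_\eta)$ via~\eqref{eq:flip-free-fibre}.
\item At the two fixed points $z=\pm1$, the fibre is $B_\eta$, since $\iota_1$ acts trivially on evaluation there.
\item Letting $z=e^{i\pi s}$ with $s\in[0,1]$, the family $\pi_{z}$ is continuous on $(0,1)$ and extends to $s=0,1$. There it is exactly the regular inclusion $j_\eta$ of~\eqref{eq:regular-flip-inclusion}.
\end{itemize}
Hence we expect
\[
 D_{1,\eta}\cong\{f\in C([0,1],M_2(A_\eta)) : f(0),f(1)\in j_\eta(B_\eta)\},
\]
and likewise for $\eta'$. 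The first concrete task is to verify this isomorphism directly from the formula for $\pi_z$.

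\emph{Step 2: endpoint equivalences.} Since $\eta,\eta'$ are nondegenerate and $A_\eta\sim_M A_{\eta'}$, \cite{ChTrace}*{Corollary~5.11} gives a $B_\eta$--$B_{\eta'}$ imprimitivity bimodule $\mathcal F$. Inducing along $j_\eta$ and $j_{\eta'}$ gives
\[
 \mathcal F_M:=M_2(A_\eta)\otimes_{j_\eta}\mathcal F\otimes_{j_{\eta'}}M_2(A_{\eta'}).
\]
We would check that $\mathcal F_M$ is an $M_2(A_\eta)$--$M_2(A_{\eta'})$ imprimitivity bimodule. The key point is that $j_\eta(B_\eta)$ is a full corner-type subalgebra in the relevant sense: $B_\eta$ is Morita equivalent to the fixed-point algebra, and $M_2(A_\eta)\cong A_\eta\rtimes\Z_2\rtimes\widehat{\Z_2}$ by Takai duality. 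We would use the same $\mathcal F$ at both endpoints.

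\emph{Step 3: gluing.} We take the bimodule of continuous sections $\xi\in C([0,1],\mathcal F_M)$ whose endpoint values lie in the copy of $\mathcal F$ inside $\mathcal F_M$. The claim is that this is a $D_{1,\eta}$--$D_{1,\eta'}$ imprimitivity bimodule. Fullness and the inner-product identities are fibrewise, so they reduce to Step 2 together with a partition-of-unity argument over $[0,1]$.

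\emph{Main obstacle.} The delicate point is the compatibility in Steps 2--3. The two endpoints are glued to the \emph{same} interior field, so the induced equivalence at $s=0$ must be connected through the constant interior bimodule to the one at $s=1$. With $\mathcal F$ used at both ends this is tautological, but one must verify that $\mathcal F_M$ is genuinely an imprimitivity bimodule. This is where Lemma~\ref{lem:regular-flip-K0} enters:
\begin{itemize}
\item Surjectivity of $(j_\eta)_*$ and the identity $\tau^{(2)}_\eta\circ j_\eta=\widehat\tau_\eta$ show that the induced $K_0$-map of $\mathcal F_M$ is the restriction of that of $\mathcal F$.
\item Hence $\mathcal F_M$ carries the class of an equivalence $M_2(A_\eta)\sim_M M_2(A_{\eta'})$ with the correct trace scaling.
\item In particular, the induced map is not degenerate on the kernel $\ker(j_\eta)_*\cong\Z^{2^m}$.
\end{itemize}

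If fullness of the induced bimodule fails directly, the fallback is as follows. Choose instead an $A_\eta$--$A_{\eta'}$ bimodule $E$ equipped with a compatible involution, as in the fundamental-module construction of \cite{ChSym}*{Proposition~4.1}. Then take $\mathcal F=E\rtimes\Z_2$ and $\mathcal F_M=M_2(E)$, for which all required identities are explicit.
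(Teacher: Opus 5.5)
Your Step~1 is the paper's interval model, but Steps~2--3 do not work, and the compatibility you call tautological is the whole difficulty.

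\textbf{Step~2 does not give an imprimitivity bimodule.} The subalgebra $j_\eta(B_\eta)\subseteq M_2(A_\eta)$ is not a corner of any kind. By Lemma~\ref{lem:regular-flip-K0}, $K_0(B_\eta)$ and $K_0(M_2(A_\eta))$ have ranks $3\cdot2^{m-1}$ and $2^{m-1}$, so these algebras are not even Morita equivalent. Let $\gamma$ be the involution of $M_2(A_\eta)$ that applies $\af_\eta$ entrywise and then conjugates by $\begin{psmallmatrix}0&1\\1&0\end{psmallmatrix}$. Its fixed-point algebra is exactly $j_\eta(B_\eta)$, and this is what Takai duality gives you. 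That fixed-point algebra is Morita equivalent to $M_2(A_\eta)\rtimes_\gamma\Z_2\cong M_2(B_\eta)$, not to $M_2(A_\eta)$.

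Concretely, put $w=\diag(1,-1)$. Then $M_2(A_\eta)=j_\eta(B_\eta)\oplus w\,j_\eta(B_\eta)$, orthonormally for the conditional expectation $\frac12(\mathrm{id}+\gamma)$. Hence:
\begin{itemize}
\item $M_2(A_\eta)$ is free of rank two over $B_\eta$;
\item $\mathcal F_M\cong\bigl(\mathcal F\otimes_{B_{\eta'}}M_2(A_{\eta'})\bigr)^{\oplus2}$;
\item $M_2(A_\eta)$ acts on $\mathcal F_M$ through the proper inclusion $M_2(A_\eta)\subsetneq\mathcal K\bigl(M_2(A_\eta)_{B_\eta}\bigr)\cong M_2(B_\eta)$.
\end{itemize}
No bookkeeping with $(j_\eta)_*$ or traces can turn this into an imprimitivity bimodule. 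Nothing guarantees either that the $K_0$-map of the bimodule from \cite{ChTrace} carries $\ker(j_\eta)_*$ into $\ker(j_{\eta'})_*$. That condition is necessary for any compatible endpoint/interior pair, and the paper has to construct $H$ with $(j_{\eta'})_*H=h(j_\eta)_*$ by hand.

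\textbf{Step~3 needs exact compatibility you cannot obtain.} Gluing constant sections requires an $M_2(A_\eta)$--$M_2(A_{\eta'})$ imprimitivity bimodule containing a $B_\eta$--$B_{\eta'}$ imprimitivity bimodule $\mathcal F$ with $j_\eta(B_\eta)\,\mathcal F\, j_{\eta'}(B_{\eta'})\subseteq\mathcal F$, and with the inner products of $\mathcal F$ taking values in $j_\eta(B_\eta)$ and $j_{\eta'}(B_{\eta'})$. That is an equivariant Morita equivalence for the double dual actions, i.e.\ essentially a flip-equivariant $A_\eta$--$A_{\eta'}$ bimodule, which is your fallback.

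The hypothesis $A_\eta\sim_M A_{\eta'}$ does not supply such a bimodule; it is an abstract equivalence detected by ordered $K_0$ \cite{EL}. The equivariant bimodules of Theorem~\ref{thm:main} need $\eta'=g\eta$ for some $g\in\mathrm{SO}(m,m|\Z)$, and \cite{ChSym}*{Proposition~4.1} only puts involutions on one-sided fundamental modules. If such a $g$ were available, Corollary~\ref{cor:flip-orbit} applied to $0_1\oplus\eta$ would give the result at once, with no interval model. That is exactly how the paper treats $m=2$ in Theorem~\ref{thm:flip-classification-Cn}.

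\textbf{The missing idea is to give up exact compatibility.} The paper proceeds as follows:
\begin{itemize}
\item It takes $h$ from a Brown--Green--Rieffel isomorphism of the stabilized interior algebras.
\item It builds an order isomorphism $H\colon K_0(B_\eta)\to K_0(B_{\eta'})$ with $(j_{\eta'})_*H=h(j_\eta)_*$.
\item It realizes $H$ by AF classification as $\beta\colon B_\eta\to p'(B_{\eta'}\otimes\mathcal K)p'$.
\item It realizes $h$ by $\psi\colon M_2(A_\eta)\to F=q'(M_2(A_{\eta'})\otimes\mathcal K)q'$, where $q'=j_{\eta'}(p')$.
\item The embeddings $\psi\circ j_\eta$ and $j_{\eta'}\circ\beta$ of $B_\eta$ into $F$ are not equal, but they agree in $KK$ and on traces. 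Lin's theorem \cite{Lin}*{Theorem~10.7} therefore makes them asymptotically unitarily equivalent along a path $u(s)$.
\item Conjugating the interior identification by $u\bigl(1/(t(1-t))\bigr)$ gives an isomorphism of $D_{1,\eta}$ onto a full corner of $D_{1,\eta'}\otimes\mathcal K$.
\end{itemize}
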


\begin{proof}
Put
\[
 A=A_\eta,\quad A'=A_{\eta'},\quad
 B=B_\eta,\quad B'=B_{\eta'},\quad
 E=M_2(A),\quad E'=M_2(A').
\]
Let $j:B\longrightarrow E$ and $j':B'\longrightarrow E'$ be the
regular inclusions. We also use $j,j'$ for their stabilizations
by the compact operators $\mathcal K$, and make the standard
identifications of $K$-groups with those of their stabilizations.

The regular representation gives the interval model
\begin{equation}\label{eq:flip-interval}
 D_{1,\eta}\cong\mathcal D(j)
 :=\{f\in C([0,1],E):f(0),f(1)\in j(B)\}.
\end{equation}
Indeed, for $z_t=e^{\pi i t}$, it sends $f_0+f_1v$, with
$f_0,f_1\in C(\T,A)$, to
\[
 t\longmapsto
 \begin{pmatrix}
 f_0(z_t)&f_1(z_t)\\
 \af_\eta(f_1(z_t^{-1}))&\af_\eta(f_0(z_t^{-1}))
 \end{pmatrix}.
\]
At $t=0,1$, we have $z_t=z_t^{-1}$, so both endpoint values
belong to $j(B)$. Conversely, the four entries of a function in
$\mathcal D(j)$ determine $f_0,f_1$ on the two semicircles,
and the endpoint conditions make them agree at $1$ and $-1$.
Similarly, $D_{1,\eta'}\cong\mathcal D(j')$.

We shall identify $\mathcal D(j)$ with a full corner of
$\mathcal D(j')\otimes\mathcal K$. This requires compatible
identifications of the interior algebra $E$ and the endpoint
algebra $B$. We first construct the endpoint identification
using Lemma~\ref{lem:regular-flip-K0} and AF classification.

Since $A\sim_M A'$, the Brown--Green--Rieffel theorem
\cite{BGR1977}*{Theorem~1.2} gives a $*$-isomorphism
\[
 \Psi:E\otimes\mathcal K\longrightarrow E'\otimes\mathcal K.
\]
Put $h=\Psi_*$. By Lemma~\ref{lem:regular-flip-K0},
$j_*$ and $j'_*$ are split surjections with free kernels of the
same rank. Choose homomorphic sections $s,s'$ and an isomorphism
$\kappa:\ker j_*\longrightarrow\ker j'_*$. The sections give
\[
 K_0(B)=s(K_0(E))\oplus\ker j_*,\qquad
 K_0(B')=s'(K_0(E'))\oplus\ker j'_*.
\]
Indeed, every $g\in K_0(B)$ has the unique decomposition
$g=s(j_*g)+(g-s(j_*g))$, with the second term in $\ker j_*$.
Thus the formula
\[
 H(s(x)+y)=s'(h(x))+\kappa(y),
 \qquad x\in K_0(E),\quad y\in\ker j_*,
\]
defines a group isomorphism $H:K_0(B)\longrightarrow K_0(B')$
satisfying
\begin{equation}\label{eq:regular-K0-square}
 j'_*H=hj_*.
\end{equation}

We next check the order. Let $\tau_E,\tau_{E'},\tau_B,\tau_{B'}$
denote the unique normalized traces; see
\cite{ELPW}*{Proposition~5.7 and Corollary~5.12}.
Set
\[
 q=1_E\otimes e_{11},\qquad P=\Psi(q),\qquad
 \mathsf T'=\tau_{E'}\otimes\operatorname{Tr},
\]
where $\operatorname{Tr}$ is the unnormalized trace on
$\mathcal K$. The trace $\mathsf T'$ is finite on the corner
with unit $P$. To see this, let
$r_N=1_{E'}\otimes\sum_{l=1}^Ne_{ll}$. Since $(r_N)$ is an
approximate identity, $Pr_NP\to P$ in norm. For some $N$,
$\|P-Pr_NP\|<1/2$, and hence
\[
 \tfrac12\mathsf T'(P)
 \leq\mathsf T'(Pr_NP)
 =\mathsf T'(r_NPr_N)
 \leq\mathsf T'(r_N)=N.
\]
The equality uses the trace identity for $r_NP$.
Consequently $c:=\mathsf T'(P)$ is finite, and it is strictly
positive because $P\ne0$ and $\mathsf T'$ is faithful. The formula
\[
 \rho(a)=\mathsf T'\bigl(\Psi(a\otimes e_{11})\bigr),
 \qquad a\in E,
\]
therefore defines a bounded positive trace on $E$, with
$\rho(1_E)=c$; for $a\geq0$, one has
$0\leq\rho(a)\leq c\|a\|$. Uniqueness of the normalized
trace gives $\rho=c\tau_E$. Passing to $K_0$, using the
unnormalized matrix extensions of the traces, gives
\[
 (\tau_{E'})_*h=c(\tau_E)_*.
\]
Together with Lemma~\ref{lem:regular-flip-K0} and
\eqref{eq:regular-K0-square}, this yields
\[
 (\tau_{B'})_*H
 =(\tau_{E'})_*j'_*H
 =(\tau_{E'})_*hj_*
 =c(\tau_B)_*.
\]
The algebras $B,B'$ are simple AF algebras
\cite{ELPW}*{Theorem~6.6}. For such algebras with unique trace,
a nonzero $K_0$-class is positive exactly when its trace is
strictly positive; see also \cite{ELPW}*{proof of Theorem~6.4}.
Since $c>0$, both $H$ and $H^{-1}$ preserve the positive cones.
Thus $H$ is an order isomorphism.

To realize $H$ at the algebra level, we choose a corner whose
unit has class $H([1_B])$. This class is nonzero and positive,
so the definition of the positive cone gives a projection
$p'\in B'\otimes\mathcal K$ with $[p']=H([1_B])$. It is full because
$B'\otimes\mathcal K$ is simple. Put
\[
 C=p'(B'\otimes\mathcal K)p',\qquad
 \iota_B:C\hookrightarrow B'\otimes\mathcal K.
\]
The full-corner inclusion induces an order isomorphism on
$K_0$ \cite{BruceTakeishi}*{Proposition~2.3}. Hence
$(\iota_B)_*^{-1}H:K_0(B)\to K_0(C)$ is an order isomorphism
sending $[1_B]$ to $[1_C]$. Since $C$ is a unital AF algebra,
AF classification \cite{ElliottAF} gives a unital $*$-isomorphism
\[
 \beta:B\longrightarrow C,
 \qquad (\iota_B)_*\beta_*=H.
\]

We now match the interior algebra to this endpoint corner.
Put $q'=j'(p')$ and $F=q'(E'\otimes\mathcal K)q'$.
Since $j$ is unital, \eqref{eq:regular-K0-square} gives
\[
 [P]=h([q])=h(j_*[1_B])=j'_*H([1_B])=[q'].
\]
The algebra $A'$ is AT of real rank zero
\cite{PhillipsAT}*{Theorem~3.8}, hence has stable rank one.
Cancellation of projections
\cite{Blackadar1998}*{Proposition~6.5.1}
therefore gives a partial isometry
$w\in E'\otimes\mathcal K$ satisfying
\[
 w^*w=P,\qquad ww^*=q'.
\]
The map $a\mapsto a\otimes e_{11}$ identifies $E$ with
$q(E\otimes\mathcal K)q$. The isomorphism $\Psi$ carries
this corner onto $P(E'\otimes\mathcal K)P$, and conjugation
by $w$ carries the latter onto $F$, with inverse $x\mapsto w^*xw$.
Thus
\[
 \psi:E\longrightarrow F,\qquad
 \psi(a)=w\Psi(a\otimes e_{11})w^*,
\]
is a unital $*$-isomorphism, with $\psi(1_E)=q'$.
Let $\iota_F:F\hookrightarrow E'\otimes\mathcal K$ be the
inclusion. Conjugation by $w$ preserves the Murray--von Neumann
classes of projections, so $(\iota_F)_*\psi_*=h$ under the
standard stabilization identification. Moreover, $q'\ne0$,
so it is full in the simple algebra $E'\otimes\mathcal K$;
in particular, $(\iota_F)_*$ is an order isomorphism.

The maps $\beta$ and $\psi$ identify the endpoint and interior
algebras, respectively. To obtain an isomorphism of the interval
algebras, we must also compare the two endpoint embeddings in $F$.
Define the unital monomorphisms
\[
 \varphi_1=\psi\circ j,\qquad
 \varphi_2=j'\circ\beta:B\longrightarrow F.
\]
The preceding identities imply
\[
 (\iota_F)_*(\varphi_1)_*=hj_*=j'_*H
                         =(\iota_F)_*(\varphi_2)_*.
\]
Injectivity of $(\iota_F)_*$ gives
$(\varphi_1)_*=(\varphi_2)_*:K_0(B)\to K_0(F)$.
Since $B$ is AF, it satisfies the universal coefficient theorem
\cite{RosenbergSchochet1987}. The freeness of $K_0(B)$ and
$K_1(B)=0$ make its extension terms vanish, giving the natural
identification
\[
 KK(B,F)\cong\operatorname{Hom}(K_0(B),K_0(F)).
\]
Consequently $[\varphi_1]=[\varphi_2]$ in $KK(B,F)$.
For every tracial state $\tau$ on $F$, the compositions
$\tau\circ\varphi_1$ and $\tau\circ\varphi_2$ are tracial
states on $B$. Uniqueness of the trace on $B$ therefore gives
$\tau\circ\varphi_1=\tau\circ\varphi_2$.
Finally, the rotation-related obstruction in
\cite{Lin}*{Definition~3.4} is zero: the degree-one part of
the required splitting has domain $K_1(B)=0$, so its
composition with the rotation homomorphism vanishes.

The source $B$ is unital AF, hence unital AH, and the target
$F\cong M_2(A)$ is unital, separable, simple and of tracial
rank zero by \cite{PhillipsAT}*{Theorem~3.5} and matrix
stability of tracial rank zero.
All the hypotheses of \cite{Lin}*{Theorem~10.7} are now
satisfied. We obtain a norm-continuous unitary path
$u:[0,\infty)\to U(F)$ such that
\begin{equation}\label{eq:regular-asymptotic}
 \lim_{s\to\infty}
 \|u(s)^*\varphi_1(b)u(s)-\varphi_2(b)\|=0,
 \qquad b\in B.
\end{equation}

We use this path to obtain the required endpoint values.
Put $\ell(t)=1/(t(1-t))$ for $0<t<1$ and let
\[
 \mathcal G=\{g\in C([0,1],F):g(0),g(1)\in\varphi_2(B)\}.
\]
For $f\in\mathcal D(j)$, write $f(i)=j(b_i)$, $i=0,1$,
where $b_i$ is unique because $j$ is injective, and define
\[
 \Phi(f)(t)=
 \begin{cases}
 u(\ell(t))^*\psi(f(t))u(\ell(t)),&0<t<1,\\
 \varphi_2(b_i),&t=i,\quad i=0,1.
 \end{cases}
\]
This function is continuous in the interior. At either
endpoint $i$, write $u_t=u(\ell(t))$. Since
$\psi(f(i))=\varphi_1(b_i)$, isometry of $\psi$ and unitary
conjugation give
\[
 \begin{aligned}
 \|\Phi(f)(t)-\varphi_2(b_i)\|
 &\leq\|f(t)-f(i)\| + \|u_t^*\varphi_1(b_i)u_t-\varphi_2(b_i)\|
 \longrightarrow0.
 \end{aligned}
\]
The first term tends to zero by continuity of $f$ and the
second by \eqref{eq:regular-asymptotic}, since $\ell(t)\to\infty$
at both endpoints. Thus $\Phi(f)\in\mathcal G$.
For each $0<t<1$, the formula is a $*$-isomorphism $E\to F$,
and at the endpoints it is the $*$-isomorphism
$\varphi_2\circ j^{-1}:j(B)\to\varphi_2(B)$.
Since the algebra operations on continuous functions are
pointwise, $\Phi:\mathcal D(j)\to\mathcal G$ is a $*$-homomorphism.

For completeness, let $g\in\mathcal G$ and write
$g(i)=\varphi_2(c_i)$. Define
\[
 f(t)=
 \begin{cases}
 \psi^{-1}\bigl(u_tg(t)u_t^*\bigr),&0<t<1,\\
 j(c_i),&t=i,\quad i=0,1.
 \end{cases}
\]
At either endpoint,
\[
 \begin{aligned}
 \|f(t)-j(c_i)\|
 &=\|g(t)-u_t^*\varphi_1(c_i)u_t\|\\
 &\leq\|g(t)-g(i)\|
       +\|\varphi_2(c_i)-u_t^*\varphi_1(c_i)u_t\|
 \longrightarrow0.
 \end{aligned}
\]
Hence $f\in\mathcal D(j)$. Substitution into the two formulas
gives the identity at every $t\in[0,1]$, proving that
$\Phi:\mathcal D(j)\to\mathcal G$ is an isomorphism.

It remains to identify $\mathcal G$ as a full corner. We have
\[
 \begin{aligned}
 R&:=\mathcal D(j')\otimes\mathcal K=\{g\in C([0,1],E'\otimes\mathcal K):
             g(0),g(1)\in j'(B'\otimes\mathcal K)\}.
 \end{aligned}
\]
This identification follows by tensoring the extension given
by endpoint evaluation with the exact algebra $\mathcal K$.
Regard $q'$ as a constant projection in $R$; it belongs to $R$
because $q'=j'(p')$. Compression by $q'$ gives interior values
in $F$ and endpoint values in
\[
 q'j'(B'\otimes\mathcal K)q'
 =j'\bigl(p'(B'\otimes\mathcal K)p'\bigr)
 =\varphi_2(B).
\]
Conversely, every $g\in\mathcal G$ belongs to $R$ and satisfies
$q'gq'=g$. Thus $\mathcal G=q'Rq'$.

To prove fullness, let $J$ be the closed two-sided ideal of
$R$ generated by $q'$. Constant functions define a unital
$*$-homomorphism
\[
 \lambda:B'\longrightarrow\mathcal D(j'),\qquad
 \lambda(b)(t)=j'(b).
\]
Its stabilization $\lambda_s=\lambda\otimes\operatorname{id}_{\mathcal K}$
sends $p'$ to the constant projection $q'$. Since $p'$ is full
in $B'\otimes\mathcal K$, every element of that algebra is a
norm limit of finite sums $\sum_l a_lp'b_l$.
Applying $\lambda_s$ shows that
$\lambda_s(B'\otimes\mathcal K)\subseteq J$. In particular,
\[
 e:=\lambda_s(1_{B'}\otimes e_{11})
   =1_{\mathcal D(j')}\otimes e_{11}\in J.
\]
For $f\in\mathcal D(j')$ and matrix units $e_{ij}\in\mathcal K$,
the ideal property gives
\[
 f\otimes e_{ij}
 =(f\otimes e_{i1})\,e\,
   (1_{\mathcal D(j')}\otimes e_{1j})\in J.
\]
Finite sums of these elements are dense in $R$, so $J=R$.
Therefore $q'$ is full. The full-corner Morita equivalence
\cite{BGR1977}*{Section~1} now gives
\[
 \mathcal D(j)\cong\mathcal G=q'Rq'
 \sim_M R\sim_M\mathcal D(j').
\]
Together with \eqref{eq:flip-interval}, this proves the result.
\end{proof}

The cases in the following classification correspond to the
arguments now available for the reduced algebras $D_{r,\eta}$.
For $r=0$, the simple case is known, and for $r=1$ we use
Proposition~\ref{prp:one-dimensional-centre}. For arbitrary $r$,
the cases $m=0$ and $m=2$ are handled by the commutative case
and the two-dimensional orbit construction, respectively.
These are precisely the alternatives in the definition of
$\mathcal C_n$ below. Recall that
$\mathcal R_\te=(\tau_\te)_*(K_0(A_\te))$.

\begin{thm}\label{thm:flip-classification-Cn}
Fix $n\geq2$ and put
\[
 \mathcal C_n=
 \{\te\in\mathcal T_n(\R):r_\te\leq1
                   \text{ or }m_\te\in\{0,2\}\}.
\]
For $\te,\te'\in\mathcal C_n$, the following conditions are
equivalent:
\begin{enumerate}
\renewcommand{\theenumi}{\roman{enumi}}
\renewcommand{\labelenumi}{\textup{(\theenumi)}}
\item $B_\te\sim_M B_{\te'}$;
\item $A_\te\sim_M A_{\te'}$;
\item $r_\te=r_{\te'}$ and
$\mathcal R_{\te'}=\lambda\mathcal R_\te$ for some $\lambda>0$.
\end{enumerate}
\end{thm}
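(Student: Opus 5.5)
The proof will follow the cycle (i)$\Rightarrow$(ii)$\Rightarrow$(iii)$\Rightarrow$(ii)$\Rightarrow$(i). The implication (i)$\Rightarrow$(ii) is Theorem~\ref{thm:flip-descent}, and (ii)$\Rightarrow$(iii) is Corollary~\ref{cor:flip-necessary-invariants} (with Theorem~\ref{thm:flip-descent} providing (ii)). Both hold for arbitrary parameters. What remains is to show that (iii) implies (i) for $\te,\te'\in\mathcal C_n$; then (ii) follows as well. Set $r=r_\te=r_{\te'}$ and $m=n-r$. Proposition~\ref{prp:flip-reduction} gives nondegenerate $\eta,\eta'\in\mathcal T_m(\R)$ with $B_\te\sim_M D_{r,\eta}$ and $B_{\te'}\sim_M D_{r,\eta'}$. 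The trace range is Morita invariant up to positive scaling, and the trace range of $C(\T^r)\otimes A_\eta$ equals $\mathcal R_\eta$. Hence (iii) gives $\mathcal R_{\eta'}=\lambda\mathcal R_\eta$. Membership in $\mathcal C_n$ depends only on $(r,m)$, so $\te$ and $\te'$ fall into the same case.

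In the case $m=0$, both reduced algebras equal $C(\T^r)\rtimes_{\iota_r}\Z_2$, and there is nothing to prove. In the case $m=2$, write $\eta=tJ_0$ and $\eta'=sJ_0$ with $t,s$ irrational. From $\Z+s\Z=\lambda(\Z+t\Z)$, the argument at the end of the proof of Theorem~\ref{thm:two-dimensional-classification} produces $M\in\GL_2(\Z)$ with $s=M\cdot t$. I would then take the block element $g=\operatorname{diag}$-embedding of $\widehat{M}$ (or $\widehat{M_+}\rho(K)$ when $\det M=-1$) acting on the last two coordinates and as the identity on the first $r$. This $g$ lies in $\mathrm{SO}(n,n|\Z)$ and sends $0_r\oplus tJ_0$ to $0_r\oplus sJ_0$. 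The conjugacy condition holds automatically for the flip, so Corollary~\ref{cor:flip-orbit} gives $D_{r,\eta}\cong B_{0\oplus\eta}\sim_M B_{0\oplus\eta'}\cong D_{r,\eta'}$.

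In the cases $r\le 1$, I first need $A_\eta\sim_M A_{\eta'}$ for nondegenerate $\eta,\eta'$ whose trace ranges are proportional. By the Elliott--Li classification \cite{EL}*{Theorem~1.1}, Morita equivalence of simple tori is determined by the ordered $K_0$-group. For simple noncommutative tori, that group with its order is determined by the trace range, since the map $K_0\to\mathcal R_\eta$ is an order isomorphism onto a dense subgroup. The same argument as in Corollary~\ref{cor:flip-necessary-invariants}, run in reverse, therefore gives the equivalence. The point to watch is that $K_0$ injects via the trace, that is, that $\operatorname{rank}\mathcal R_\eta=2^{m-1}$; I expect this to be the subtle step, and I would cite \cite{EL} for it directly. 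For $r=0$, the conclusion $B_\eta\sim_M B_{\eta'}$ then follows from \cite{ChTrace}*{Corollary~5.11}. For $r=1$, Proposition~\ref{prp:one-dimensional-centre} gives $D_{1,\eta}\sim_M D_{1,\eta'}$ when $m\ge2$. If $m<2$, then $m=0$, which was handled above. (The case $m=1$ cannot occur for a nondegenerate skew matrix, since $r\equiv n\pmod 2$.) The main obstacle is the passage from proportional trace ranges to $A_\eta\sim_M A_{\eta'}$; everything else is an assembly of the earlier results.
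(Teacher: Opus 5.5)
Your skeleton matches the paper's: reduce to $D_{r,\eta}$ and $D_{r,\eta'}$ via Proposition~\ref{prp:flip-reduction}, then treat $m=0$ (identical systems), $m=2$ (embed the $\mathrm{SO}(2,2|\Z)$ element and apply Corollary~\ref{cor:flip-orbit}), $r=0$ (\cite{ChTrace}*{Corollary~5.11}) and $r=1$ (Proposition~\ref{prp:one-dimensional-centre}). You depart from the paper in how you obtain $A_\eta\sim_M A_{\eta'}$. The paper gets it from $A_\te\sim_M A_{\te'}$ by passing to the quotients at points of the centres. You get it from proportional trace ranges in dimension $m$.

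The justification you give for that step is false. For nondegenerate $\eta$, the trace map $K_0(A_\eta)\to\mathcal R_\eta$ need not be injective, so $\rank\mathcal R_\eta=2^{m-1}$ fails in general and cannot be cited from \cite{EL}. Two counterexamples:
\begin{itemize}
\item For $t$ irrational, $\eta=tJ_4$ (the torus of Example~\ref{exa:jeong-lee-families}) is nondegenerate. Yet $\mathcal R_\eta=\Z+t\Z+t^2\Z$ has rank at most $3$, while $K_0(A_\eta)\cong\Z^8$.
\item For $m=3$ and transcendental $t$, take $\eta_{12}=t$, $\eta_{13}=t^2$, $\eta_{23}=0$. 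This gives $H_\eta=0$ and $\mathcal R_\eta=\Z+t\Z+t^2\Z$ of rank $3<4$.
\end{itemize}
So the step you identify as the main obstacle fails as you have argued it.

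What makes the step work is equal dimension, not injectivity. Since $\eta,\eta'\in\mathcal T_m(\R)$, both $K_0$-groups are $\Z^{2^{m-1}}$. The trace maps surject onto the free groups $\mathcal R_\eta$ and $\mathcal R_{\eta'}=\lambda\mathcal R_\eta$, with free kernels of the same rank. Splitting these surjections, exactly as in the construction of $H$ in the proof of Proposition~\ref{prp:one-dimensional-centre}, gives a group isomorphism that scales the traces by $\lambda$. It is an order isomorphism because the positive cone of a simple torus is $\{0\}\cup\{x:\tau_*(x)>0\}$. This is the content of \cite{EL}*{Proposition~4.1} for tori of the same dimension. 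Combined with \cite{EL}*{Theorem~1.1} (trivial centres), it gives $A_\eta\sim_M A_{\eta'}$, and the rest of your case analysis goes through. For $m=2$ you were fine anyway, since there $\mathcal R_\eta=\Z+t\Z$ has rank $2$. Your earlier step $\mathcal R_\eta=\lambda_1\mathcal R_\te$ likewise needs \cite{EL}*{Proposition~4.1}, applied in dimension $n$ to $\te$ and $0_r\oplus\eta$.

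Finally, your parenthetical claim $r\equiv n \pmod 2$ is false: the $3\times3$ example above is a simple torus of odd dimension. The case $m=1$ is excluded because the only $1\times1$ skew-symmetric matrix is $0$ and $C(\T)$ is not simple. For $r=1$ you have $m=n-1\geq1$, hence automatically $m\geq2$.
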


\begin{proof}
\textup{(i)$\Rightarrow$(ii).}
This is Theorem~\ref{thm:flip-descent}.

\textup{(ii)$\Leftrightarrow$(iii).}
By \cite{EL}*{Theorem~1.1}, the tori $A_\te$ and $A_{\te'}$
are Morita equivalent exactly when their centres and ordered
$K_0$-groups are isomorphic. Their centres are $C(\T^{r_\te})$
and $C(\T^{r_{\te'}})$, so the first requirement is
$r_\te=r_{\te'}$. Since both tori have dimension $n$,
\cite{EL}*{Proposition~4.1} identifies the second requirement
with
\[
 \mathcal R_{\te'}=\lambda\mathcal R_\te
 \quad\text{for some }\lambda>0.
\]
Together, these are precisely the conditions in \textup{(iii)}.

\textup{(ii)$\Rightarrow$(i).}
Suppose that $A_\te\sim_M A_{\te'}$, and put
$r=r_\te=r_{\te'}$ and $m=n-r$.
By Proposition~\ref{prp:flip-reduction}, choose
nondegenerate $\eta,\eta'\in\mathcal T_m(\R)$ such that
\[
 A_\te\sim_M C(\T^r)\otimes A_\eta,
 \qquad
 A_{\te'}\sim_M C(\T^r)\otimes A_{\eta'},
\]
and $B_\te\sim_M D_{r,\eta}$,
$B_{\te'}\sim_M D_{r,\eta'}$. The two tensor products above are
Morita equivalent. Taking corresponding quotients at points of
their centres, as in the proof of Theorem~\ref{thm:flip-descent},
gives $A_\eta\sim_M A_{\eta'}$, since all their fibres are
$A_\eta$ and $A_{\eta'}$, respectively.
It therefore remains to prove
$D_{r,\eta}\sim_M D_{r,\eta'}$.

First suppose that $r\leq1$. If $r=0$, the desired equivalence
follows from \cite{ChTrace}*{Corollary~5.11}. If $r=1$, then
$m\geq2$ and
Proposition~\ref{prp:one-dimensional-centre} gives
$D_{1,\eta}\sim_M D_{1,\eta'}$.

Now suppose that $r\geq2$. Since $\te\in\mathcal C_n$, we have
$m\in\{0,2\}$. If $m=0$, both reduced systems are
$(C(\T^n),\iota_n)$, so their crossed products are isomorphic.

It remains to consider $m=2$. Write $\eta=tJ_0$ and
$\eta'=sJ_0$, where $s,t$ are irrational. By the equivalence of
\textup{(ii)} and \textup{(iii)} in
Theorem~\ref{thm:two-dimensional-classification}, there exists
$M\in\rm{GL}_2(\Z)$ such that $s=M\cdot t$.
The construction in the proof of
Proposition~\ref{prp:two-dimensional-orbit} gives an
$h\in\rm{SO}(2,2|\Z)$ such that $h\eta=\eta'$. Write
$h=\begin{pmatrix}A&B\\C&D\end{pmatrix}$, with
$A,B,C,D\in\rm M_2(\Z)$, and set
\[
 g=
 \begin{pmatrix}
  I_r&0&0&0\\
  0&A&0&B\\
  0&0&I_r&0\\
  0&C&0&D
 \end{pmatrix}
 \in\rm{SO}(n,n|\Z).
\]
Then
\[
 g(0_r\oplus\eta)
 =0_r\oplus(A\eta+B)(C\eta+D)^{-1}
 =0_r\oplus\eta'.
\]
Corollary~\ref{cor:flip-orbit} now gives
$D_{r,\eta}\sim_M D_{r,\eta'}$. This proves
\textup{(ii)}$\Rightarrow$\textup{(i)} and completes the proof.
\end{proof}

\begin{cor}
\label{cor:flip-classification-four}
Let $2\leq n\leq4$ and let $\te,\te'\in\mathcal T_n(\R)$. Then
\[
 B_\te\sim_M B_{\te'}
 \quad\Longleftrightarrow\quad
 A_\te\sim_M A_{\te'}
\]
and these conditions hold exactly when
\[
 r_\te=r_{\te'},\qquad
 \mathcal R_{\te'}=\lambda\mathcal R_\te
 \quad\text{for some }\lambda>0.
\]
\end{cor}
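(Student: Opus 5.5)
The corollary should follow directly from Theorem~\ref{thm:flip-classification-Cn}. The only real work is checking that every $\te\in\mathcal T_n(\R)$ with $2\leq n\leq4$ lies in $\mathcal C_n$. Once that is done, the equivalence of (i), (ii) and (iii) in that theorem gives both asserted statements at once.

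To check membership, fix $\te$ and write $r=r_\te$ and $m=m_\te=n-r$. If $r\leq1$, then $\te\in\mathcal C_n$ by definition, so we may assume $r\geq2$, which gives $m\leq n-2\leq2$.

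The key extra input is a parity constraint: $m\neq1$. I would obtain it from Proposition~\ref{prp:flip-reduction}, which gives $A_\te\sim_M C(\T^r)\otimes A_\eta$ for a nondegenerate $\eta\in\mathcal T_m(\R)$. When $m=1$, the only $1\times1$ skew-symmetric matrix is $\eta=0$. But then $\eta\cdot1=0\in\Z$, so $H_\eta=\Z\neq0$, and $\eta$ is degenerate, a contradiction.

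If one prefers to avoid going through the proposition, there is a direct argument. The subgroup $H_\te$ is saturated in $\Z^n$: if $kx\in H_\te$, then $x\in H_\te$ after passing to the rational span, up to the appropriate lattice, so it suffices to argue with rank. Suppose $r=n-1$, and pick $x\notin H_\te\otimes\Q$. Then $\Z^n\otimes\Q=(H_\te\otimes\Q)+\Q x$, and since $x^t\te x=0$, the degeneracy forces a contradiction. I expect the citation route via nondegeneracy of $\eta$ to be the cleaner of the two, and I would use that.

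With $m=1$ excluded and $m\leq2$, we get $m\in\{0,2\}$, so $\te\in\mathcal C_n$. The same holds for $\te'$, and Theorem~\ref{thm:flip-classification-Cn} then yields the corollary. The only delicate point is excluding $m=1$, and this is handled by the nondegeneracy of $\eta$ in Proposition~\ref{prp:flip-reduction}.
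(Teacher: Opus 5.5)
Your proposal is correct and follows the same route as the paper: show that every $\te$ lies in $\mathcal C_n$ by noting that $r_\te\geq2$ forces $m_\te\leq2$, and exclude $m_\te=1$ because the only $1\times1$ skew-symmetric matrix is $0$, which gives the degenerate (non-simple) torus $C(\T)$ and so cannot be the nondegenerate factor of Proposition~\ref{prp:flip-reduction}. Since you do not rely on it, simply drop your optional ``direct argument''; as written it claims $H_\te$ is saturated, which is false (for $\te=\tfrac12J_0$ one has $H_\te=2\Z^2$), though it can be repaired by writing $\Z^n=H'\oplus\Z x$ with $H'$ the saturation of $H_\te$ and $NH'\subseteq H_\te$, then using $x^t\te x=0$ to get $\te(Nx)\in\Z^n$.
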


\begin{proof}
The dimension $m_\te$ of the simple factor cannot be one: the only
$1\times1$ skew-symmetric matrix is zero, and its torus is $C(\T)$,
which is not simple. If $r_\te\geq2$ and $n\leq4$, then
$m_\te=n-r_\te\leq2$, and hence $m_\te\in\{0,2\}$.
Otherwise $r_\te\leq1$. Therefore every
$\te\in\mathcal T_n(\R)$ belongs to $\mathcal C_n$, and the
result follows from Theorem~\ref{thm:flip-classification-Cn}.
\end{proof}

For these dimensions the trace ranges in the classification are
explicit. If $\te=(\te_{ij})$, then \eqref{eq:invariants} gives
\[
 \begin{aligned}
 n=2:\quad &\mathcal R_\te=\Z+\te_{12}\Z,\\
 n=3:\quad &\mathcal R_\te=\Z+\te_{12}\Z+\te_{13}\Z+\te_{23}\Z,\\
 n=4:\quad &\mathcal R_\te=\Z+\sum_{1\leq i<j\leq4}\te_{ij}\Z
  +(\te_{12}\te_{34}-\te_{13}\te_{24}+\te_{14}\te_{23})\Z.
 \end{aligned}
\]
Together with $r_\te=\rank\{x\in\Z^n:\te x\in\Z^n\}$, these
formulas give the stated classification entirely in terms of the
parameters. Theorem~\ref{thm:flip-descent} and
Theorem~\ref{thm:flip-classification-Cn} also complete the proof of
Theorem~\ref{thm:classification}.

\subsection{Infinite families for a fixed cyclic group action}

We now construct infinite families of parameters for which the
same matrix $W$ gives Morita equivalent crossed products. Throughout
this subsection, we assume that
\[
 W\in\rm{GL}_n(\Z),\qquad W^k=I_n,\qquad
 1\notin\operatorname{spec}(W).
\]
The point is to choose the matrices in Theorem~\ref{thm:main} from
the centralizer of $\rh(W^t)$. Then the action on the transformed
torus is again given by $W$.

\begin{prp}\label{prp:centralizer-families}
Let $W\in\rm{GL}_n(\Z)$ satisfy $W^k=I_n$ and
$1\notin\operatorname{spec}(W)$. Let
$\te\in\mathcal T_n(\R)$ satisfy $W^t\te W=\te$, and let
$N,C\in\mathcal T_n(\R)\cap M_n(\Z)$ satisfy
\[
 W^tNW=N,\qquad WCW^t=C.
\]
For $a,b\in\Z$ such that $I_n+bC\te$ is invertible, put
\[
 \te_{a,b}=\te(I_n+bC\te)^{-1}+aN.
\]
Then $W^t\te_{a,b}W=\te_{a,b}$ and
\[
 A_{\te_{a,b}}\rtimes_{\af_W}\Z_k
 \sim_M A_\te\rtimes_{\af_W}\Z_k.
\]
\end{prp}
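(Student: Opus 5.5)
Apply Theorem~\ref{thm:main} to the product $g=\nu(aN)\,\sigma(bC)$, where
\[
 \sigma(C):=\begin{pmatrix}I_n&0\\C&I_n\end{pmatrix}.
\]
First check that $\sigma(bC)\in\rm{SO}(n,n|\Z)$. Its determinant is $1$. The product $\sigma^tJ_{n,n}\sigma$ equals $J_{n,n}$ plus a block $C+C^t$, which vanishes because $C$ is skew-symmetric. The same computation works for $\nu(aN)$, which is already known to lie in $\rm{SO}(n,n|\Z)$.

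Next compute the partial action:
\[
 \sigma(bC)\te=\te(bC\te+I_n)^{-1},
\]
which is defined exactly when $I_n+bC\te$ is invertible. The matrix $\nu(aN)$ is always defined on its argument and adds $aN$. Hence $g\te=\te_{a,b}$.

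The substance is the conjugacy condition~\eqref{eq:conjugacy} with $P=W^t$. We show that both factors commute with $\rho(W^t)=\operatorname{diag}(W^t,W^{-1})$.
\begin{itemize}
 \item For $\nu(aN)$, the commutation reduces to $W^t(aN)=aN W^{-1}$, i.e.\ $W^tNW=N$.
 \item For $\sigma(bC)$, it reduces to $W^{-1}(bC)=bC W^t$, i.e.\ $C=WCW^t$.
\end{itemize}
Both are hypotheses of the statement, so $g\rho(W^t)g^{-1}=\rho(W^t)$ and hence $P=W^t$.

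Theorem~\ref{thm:main} then applies with $P^t=W$. The hypotheses~\eqref{eq:hypW} on $(\te,W)$ hold by assumption. The theorem yields $P(g\te)P^t=g\te$, which reads $W^t\te_{a,b}W=\te_{a,b}$. It also yields
\[
 A_{\te_{a,b}}\rtimes_{\af_W}\Z_k\sim_M A_\te\rtimes_{\af_W}\Z_k.
\]

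If the order of the factors causes a definedness issue, we can instead apply the theorem twice. The first step takes $\te$ to $\te(I+bC\te)^{-1}$ via $\sigma(bC)$. The second step adds $aN$ via $\nu(aN)$, which is always defined. We then compose the two Morita equivalences. The invariance of the intermediate parameter follows from the first application.

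The only real care needed is in the block computations: verifying the $\rm{SO}(n,n|\Z)$ membership and the commutation with $\rho(W^t)$, since the transposes must be placed correctly. None of these steps is a genuine obstacle.
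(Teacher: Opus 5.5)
Your proposal is correct and is essentially the paper's proof. The paper also takes $g_{a,b}=\nu(aN)\begin{psmallmatrix}I_n&0\\bC&I_n\end{psmallmatrix}$, checks that both factors commute with $\rho(W^t)$ via $NW^{-1}=W^tN$ and $CW^t=W^{-1}C$, and applies Theorem~\ref{thm:main} with $P=W^t$; your fallback of applying the theorem twice is unnecessary, because the product has lower block row $(bC,\ I_n)$, so $g\te$ is defined exactly when $I_n+bC\te$ is invertible and equals $\te_{a,b}$ by direct computation.
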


\begin{proof}
Put
\[
 u_a=\begin{pmatrix}I_n&aN\\0&I_n\end{pmatrix},\qquad
 v_b=\begin{pmatrix}I_n&0\\bC&I_n\end{pmatrix}.
\]
Since $N$ and $C$ are integral and skew-symmetric, both matrices
belong to $\rm{SO}(n,n|\Z)$. The invariance conditions give
\[
 NW^{-1}=W^tN,\qquad CW^t=W^{-1}C.
\]
Thus both $u_a$ and $v_b$ commute with
$\rh(W^t)=\operatorname{diag}(W^t,W^{-1})$. Consequently,
\[
 g_{a,b}:=u_av_b
 =\begin{pmatrix}I_n+abNC&aN\\bC&I_n\end{pmatrix}
\]
satisfies
\[
 g_{a,b}\rh(W^t)g_{a,b}^{-1}=\rh(W^t).
\]
Moreover,
\[
 \begin{split}
 g_{a,b}\te
 &=\bigl((I_n+abNC)\te+aN\bigr)(I_n+bC\te)^{-1}\\
 &=\te(I_n+bC\te)^{-1}+aN=\te_{a,b}.
 \end{split}
\]
Theorem~\ref{thm:main}, with $P=W^t$, proves both assertions.
\end{proof}

We next show that this construction always produces infinitely
many distinct parameters under the hypotheses above. This includes
the case $\te=0$.

\begin{thm}\label{thm:infinite-fixed-action}
Let $n\geq2$, let $W\in\rm{GL}_n(\Z)$ satisfy $W^k=I_n$ and
$1\notin\operatorname{spec}(W)$, and let
$\te\in\mathcal T_n(\R)$ satisfy $W^t\te W=\te$.
There are pairwise distinct matrices
$\te_j\in\mathcal T_n(\R)$, indexed by an infinite subset of
$\Z$, such that
\[
 W^t\te_jW=\te_j,\qquad
 A_{\te_j}\rtimes_{\af_W}\Z_k
 \sim_M A_\te\rtimes_{\af_W}\Z_k.
\]
If $\te\neq0$, one can take
\begin{equation}\label{eq:infinite-parameters}
 \te_j=\te(I_n+jC\te)^{-1},
\end{equation}
where $C$ is an explicitly chosen integral skew-symmetric matrix
satisfying $WCW^t=C$. This formula is defined for all but finitely
many integers $j$.
\end{thm}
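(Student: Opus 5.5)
The plan is to reduce everything to Proposition~\ref{prp:centralizer-families}, splitting into the cases $\te=0$ and $\te\neq0$.

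\emph{Case $\te=0$.} We take $C=0$ and $\te_j=jN$, where $N$ is a nonzero integral skew-symmetric matrix with $W^tNW=N$. Proposition~\ref{prp:centralizer-families} with $a=j$ and $b=0$ then gives the Morita equivalence. The $\te_j$ are pairwise distinct because $N\neq0$. To produce $N$, start from any integral skew-symmetric $N_0$ and average over the group generated by $W$:
\[
 N=\sum_{i=0}^{k-1}(W^i)^tN_0W^i .
\]
This $N$ is integral, skew-symmetric and $W$-invariant. The only issue is to choose $N_0$ so that $N\neq0$. For this, view the $W$-invariant skew forms as the invariants of the finite group $\langle W\rangle$ acting on $\Lambda^2(\Q^n)^*$. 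The averaging map is onto the invariants over $\Q$, so it suffices to show that a nonzero invariant skew form exists. Over $\C$, pair eigenvalues $\mu$ and $\mu^{-1}$ of $W^t$. If $\mu\neq\pm1$, one uses $e_\mu^*\wedge e_{\mu^{-1}}^*$. Since $1\notin\operatorname{spec}(W)$, the remaining eigenvalues equal $-1$, and for two of these, counted with multiplicity, the wedge is invariant because $(-1)^2=1$. Because $n\geq2$ and the spectrum of $W$ is closed under inversion (as $W$ has finite order), such a pair always exists: either two $-1$'s, or some $\mu$ together with $\bar\mu=\mu^{-1}$. Rationality of $W$ then gives a nonzero rational invariant form, and clearing denominators makes it integral.

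\emph{Case $\te\neq0$.} We need an integral skew-symmetric $C$ with $WCW^t=C$ and $C\te\neq0$. Set $\te_j=\te(I_n+jC\te)^{-1}$ and apply Proposition~\ref{prp:centralizer-families} with $a=0$ and $b=j$.

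Definedness: $\det(I_n+jC\te)$ is a polynomial in $j$ with constant term $1$. It is therefore nonzero for all but finitely many $j$.

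Distinctness: if $\te_j=\te_{j'}$, then $\te_j^{-1}$ and $\te_{j'}^{-1}$ need not exist, so instead use the identity $\te_j(I_n+jC\te)=\te$. Formally $\te_j=\te-j\te C\te+O(j^2)$ as a rational function. The cleaner route is
\[
 (I_n+jC\te)^{-1}-(I_n+j'C\te)^{-1}=(j'-j)(I_n+jC\te)^{-1}C\te(I_n+j'C\te)^{-1}.
\]
Hence $\te_j=\te_{j'}$ with $j\neq j'$ forces $\te(I_n+jC\te)^{-1}C\te=0$. Expanding in a Neumann-type argument, each entry of $\te_j$ is a rational function of $j$, so it suffices that $j\mapsto\te_j$ is nonconstant. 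A nonconstant rational function takes each value only finitely often, so we get infinitely many distinct values on an infinite subset of $\Z$. Nonconstancy follows from the derivative at $j=0$, which is $-\te C\te$. So we really need $\te C\te\neq0$.

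Choosing $C$: averaging $C=\sum_i W^{-i}C_0(W^{-i})^t$, and $\te$ is invariant in the dual sense. The main obstacle is ensuring $\te C\te\neq0$ for a suitable integral $C_0$. My plan is to use the pairing $\langle C,X\rangle=\operatorname{tr}(\te C\te X)$ against invariant $X$. Concretely, consider the linear map $C\mapsto\te C\te$ on skew matrices. Its restriction to the $W$-invariant subspace $\{C:WCW^t=C\}$ vanishes only if $\te\,\cdot\,\te$ kills all invariant skew $C$. Taking real invariant $C=\te^{\sharp}$ built from $\te$ itself, for instance $C$ approximating $\te^t$, which satisfies $W\te^tW^t$-type invariance through $W^t\te W=\te$, gives $\te\te^t\te$. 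This is nonzero when $\te\neq0$, since $\operatorname{tr}(\te\te^t\te\te^t)>0$. Density of rational invariants in real invariants and clearing denominators then yields an integral invariant $C$ with $\te C\te\neq0$. I expect the hardest step to be checking that $\te^t$, or a suitable substitute such as $(\te^t)$ conjugated appropriately, actually lies in the real invariant space $\{WCW^t=C\}$. If it does not, I would instead average $\te^t$ over $\langle W\rangle$ and verify that the trace pairing with $\te\cdot\te$ remains positive by invariance of $\te$.
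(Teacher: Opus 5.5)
\textbf{Gap: the choice of $C$ when $\te\neq0$.} This is the heart of that case, and you never actually produce an integral skew $C$ with $WCW^t=C$ and $\te C\te\neq0$. Both routes you sketch fail as stated.

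Your primary candidate $C=\te^t$ is not invariant in general. The hypothesis $W^t\te W=\te$ says that $W$ preserves the form $\te$. Inverting it gives $W\te^{-1}W^t=\te^{-1}$ when $\te$ is invertible, but $W\te^tW^t=\te^t$ is the different condition $W\te W^t=\te$. For instance, take $n=4$, $W=\operatorname{diag}(W_3,W_3)$ and $\te=\begin{pmatrix}0&\beta\\-\beta^t&0\end{pmatrix}$ with $\beta=J_0W_3$. Then $W_3^t\beta W_3=(W_3^tJ_0W_3)W_3=\beta$, so $W^t\te W=\te$. However, $W_3\beta W_3^t=\begin{pmatrix}-1&1\\2&-3\end{pmatrix}\neq\beta$, so $\te^t$ is not invariant.

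Your fallback is not justified either. You propose to average $\te^t$ and check that $\operatorname{Tr}(\te C\te\,\te^t)$ stays positive ``by invariance of $\te$''. Averaging $C_0\mapsto\sum_lW^lC_0(W^l)^t$ multiplies $\operatorname{Tr}(C_0X)$ by $k$ only when $W^tXW=X$. For $X=\te\te^t\te$, invariance of $\te$ only gives $W^tXW=\te\,W^{-1}\te^tW^{-t}\,\te$, which is not $X$ in general. So nothing you wrote guarantees $\te C\te\neq0$, and your distinctness argument depends on exactly that.

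\textbf{The missing idea} is to pair with $\te$ itself. Since $W^t\te W=\te$, cyclicity of the trace gives $\operatorname{Tr}(W^lC_0(W^l)^t\te)=\operatorname{Tr}(C_0(W^l)^t\te W^l)=\operatorname{Tr}(C_0\te)$. Hence $C=\sum_{l=0}^{k-1}W^lC_0(W^l)^t$ satisfies $\operatorname{Tr}(C\te)=k\operatorname{Tr}(C_0\te)$. Moreover, $\te C\te=0$ would force $(C\te)^2=0$ and hence $\operatorname{Tr}(C\te)=0$.

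The paper takes $C_0=E_{qp}-E_{pq}$ with $\te_{pq}\neq0$. Then $C$ is integral from the start and $\operatorname{Tr}(C\te)=2k\te_{pq}\neq0$. Your averaged $\te^t$ also works by this argument, since then $\operatorname{Tr}(C\te)=k\operatorname{Tr}(\te^t\te)>0$, after which your density and clearing-denominators step applies.

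\textbf{The rest of your plan is sound} once this is repaired.
\begin{itemize}
\item Your rational-function argument gives pairwise distinct parameters on an infinite set of $j$. The paper's resolvent identity, using that $(I_n+jC\te)^{-1}$ commutes with $C\te$, gives distinctness for every admissible $j$.
\item Your $\te=0$ case is a valid alternative to the paper's. You use an eigenvalue count to get a nonzero integral invariant $N$ and then Proposition~\ref{prp:centralizer-families} with $a=j$, $b=0$. The paper instead uses the explicit $N=G(W-W^{-1})$ with $G=\sum_l(W^l)^tW^l$ (or $E_{12}-E_{21}$ when $W=-I_n$), together with the equivariant isomorphism $A_{2jN}\cong A_0$.
\end{itemize}
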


\begin{proof}
Suppose first that $\te\neq0$. Choose $p<q$ with $\te_{pq}\neq0$
and, writing $E_{ij}$ for the matrix units, set
\[
 C_0=E_{qp}-E_{pq},\qquad
 C=\sum_{l=0}^{k-1}W^lC_0(W^l)^t.
\]
Then $C$ is integral and skew-symmetric, and $W^k=I_n$ gives
$WCW^t=C$. By invariance of $\te$ and cyclicity of the matrix trace,
\[
 \operatorname{Tr}(C\te)
 =\sum_{l=0}^{k-1}\operatorname{Tr}\bigl(C_0(W^l)^t\te W^l\bigr)
 =2k\te_{pq}\neq0.
\]
Thus $\te C\te\neq0$: otherwise $(C\te)^2=0$, forcing
$\operatorname{Tr}(C\te)=0$.

Since $\det(I_n+XC\te)$ is a polynomial with constant term one,
\[
 S=\{j\in\Z:\det(I_n+jC\te)\neq0\}
\]
contains all but finitely many integers. For $j\in S$, define
$\te_j$ by~\eqref{eq:infinite-parameters}.
Proposition~\ref{prp:centralizer-families}, with $a=0$ and $b=j$,
gives $W^t\te_jW=\te_j$ and the required Morita equivalence.
For $j,l\in S$, the resolvent identity and the fact that
$(I_n+jC\te)^{-1}$ commutes with $C\te$ give
\[
 \te_j-\te_l
 =(l-j)\te C\te
       (I_n+jC\te)^{-1}(I_n+lC\te)^{-1}.
\]
Since $\te C\te\neq0$ and both inverse factors are invertible,
this difference is nonzero whenever $j\neq l$.

Now suppose that $\te=0$. We construct a nonzero integral
skew-symmetric matrix $N$ satisfying $W^tNW=N$.
If $W^2\neq I_n$, put
\[
 G=\sum_{l=0}^{k-1}(W^l)^tW^l,\qquad N=G(W-W^{-1}).
\]
The matrix $G$ is integral and positive definite, with $W^tGW=G$.
Hence $W^tG=GW^{-1}$ and $W^{-t}G=GW$, so
\[
 N^t=(W^t-W^{-t})G=-N,\qquad W^tNW=N.
\]
Moreover, $N\neq0$ because $G$ is invertible and $W\neq W^{-1}$.
If $W^2=I_n$, the absence of the eigenvalue $1$ implies $W=-I_n$;
then take $N=E_{12}-E_{21}$.

Set $\te_j=2jN$ for $j\in\Z$. These parameters are pairwise
distinct and $W$-invariant, and
\[
 \sm_{\te_j}(x,y)=\exp(2\pi i jx^tNy)=1=\sm_0(x,y),
 \qquad x,y\in\Z^n.
\]
Thus $U_x^0\mapsto U_x^{\te_j}$ defines a $W$-equivariant
$*$-isomorphism $A_0\cong A_{\te_j}$, and consequently
\[
 A_{\te_j}\rtimes_{\af_W}\Z_k
 \cong A_0\rtimes_{\af_W}\Z_k.
\]
\end{proof}

\begin{rmk}\label{rmk:infinite-family-scope}
The distinctness assertion in Theorem~\ref{thm:infinite-fixed-action}
concerns the parameter matrices; it does not imply that the crossed
products are pairwise non-isomorphic. Indeed, the nonzero invariant
integral skew-symmetric matrix $N$ constructed in the proof depends
only on $W$. For every $W$-invariant $\te$ and $j\in\Z$,
$\sm_{\te+2jN}=\sm_\te$, so the map
$U_x^\te\mapsto U_x^{\te+2jN}$ intertwines the $W$-actions and gives
\[
 A_{\te+2jN}\rtimes_{\af_W}\Z_k
 \cong A_\te\rtimes_{\af_W}\Z_k.
\]
These shifted parameters are pairwise distinct. For $\te\neq0$,
the theorem also provides the fractional-linear
family~\eqref{eq:infinite-parameters} in the same Morita equivalence class.

The hypothesis $1\notin\operatorname{spec}(W)$ cannot be omitted
from an assertion covering every invariant parameter. For example,
if $W=\operatorname{diag}(1,-1)$, then
$W^t(tJ_0)W=-tJ_0$, so zero is the only invariant skew-symmetric
parameter.

Examples~\ref{exa:infinite-block-families} and
\ref{exa:jeong-lee-families} apply the fractional-linear construction
to block actions and to the canonical actions of orders $5,8,10,12$
on $A_t\otimes A_t$ constructed by Jeong--Lee
\cite{JL}*{Proposition~6.4}, respectively.
\end{rmk}

\begin{exa}\label{exa:infinite-block-families}
Let $d\geq1$, $n=2d$ and $t_1,\ldots,t_d\in\R$. Put
\[
 \te=\operatorname{diag}(t_1J_0,\ldots,t_dJ_0),\qquad
 W=\operatorname{diag}(W_{k_1},\ldots,W_{k_d}),
\]
where $k_i\in\{2,3,4,6\}$ and $k$ is the least common multiple of the
$k_i$. Then $W^k=I_{2d}$, $W^t\te W=\te$ and
$1\notin\operatorname{spec}(W)$. Take
\[
 C=-\operatorname{diag}(J_0,\ldots,J_0).
\]
Since every $W_{k_i}$ has determinant one, $WCW^t=C$, and
$C\te=\operatorname{diag}(t_1I_2,\ldots,t_dI_2)$.
For each $j\in\Z$ with $1+jt_i\neq0$ for every $i$, put
\[
 \te_j=
 \operatorname{diag}\left(
 \frac{t_1}{1+jt_1}J_0,\ldots,
 \frac{t_d}{1+jt_d}J_0\right).
\]
Proposition~\ref{prp:centralizer-families}, with $N=0$, $a=0$
and $b=j$, gives
\[
 A_{\te_j}\rtimes_{\af_W}\Z_k
 \sim_M A_\te\rtimes_{\af_W}\Z_k.
\]
Only finitely many integers fail the denominator conditions.
If some $t_i\neq0$, the corresponding scalar
$t_i/(1+jt_i)$ is injective as a function of the admissible $j$,
so the matrices $\te_j$ are pairwise distinct.
If every $t_i\geq0$, all $j\geq0$ are allowed. Taking every
$k_i=2$ gives explicit examples for the flip in every even
dimension.
\end{exa}

\begin{exa}
\label{exa:jeong-lee-families}
Let $t\in\R\setminus\Q$, and put
\(
 J_4=\operatorname{diag}(J_0,J_0),\qquad
 \Th_t=tJ_4.
\)
Then $A_{\Th_t}\cong A_t\otimes A_t$. Jeong--Lee
\cite{JL}*{Proposition~6.4} construct canonical actions of
$\Z_k$, for $k\in\{5,8,10,12\}$, on this torus. Their
matrices in \cite{JL}*{equation~(6.25)}, denoted here by $V_k$, are
\[
 \begin{gathered}
 V_5=\begin{pmatrix}
 0&-1&0&-1\\
 1&-1&0&-1\\
 0&0&0&-1\\
 -1&0&1&0
 \end{pmatrix},\qquad
 V_8=\begin{pmatrix}
 0&0&0&-1\\
 0&0&1&0\\
 1&0&0&0\\
 0&1&0&0
 \end{pmatrix},\\[4pt]
 V_{10}=\begin{pmatrix}
 0&-1&0&-1\\
 1&1&0&1\\
 0&0&0&-1\\
 -1&0&1&0
 \end{pmatrix},\qquad
 V_{12}=\begin{pmatrix}
 0&0&0&-1\\
 0&0&1&1\\
 1&0&0&0\\
 0&1&0&0
 \end{pmatrix}.
 \end{gathered}
\]
A direct computation gives $V_k^tJ_4V_k=J_4$ for every such $k$.
Moreover, as in \cite{JL}*{Lemma~6.3 and equation~(6.25)}, $V_k$
is conjugate in $\rm{GL}_4(\Z)$ to the companion matrix of the
$k$th cyclotomic polynomial. Its eigenvalues are therefore the
primitive $k$th roots of unity. In particular,
\[
 V_k^k=I_4,\qquad 1\notin\operatorname{spec}(V_k),\qquad
 V_k^t\Th_tV_k=\Th_t.
\]
Using the Weyl convention of the present paper, the corresponding
action is written as
\[
 \af_{V_k}(U_x^{\Th_t})=U_{V_kx}^{\Th_t},
 \qquad x\in\Z^4.
\]

Put $C=-J_4=J_4^{-1}$. Then $V_kCV_k^t=C$ and
$C\Th_t=tI_4$. Since $t$ is irrational, $1+jt\neq0$ for
every $j\in\Z$, so
\[
 \begin{aligned}
 I_4+jC\Th_t&=(1+jt)I_4,\\
 \Th_t(I_4+jC\Th_t)^{-1}&=t_jJ_4,
 \qquad t_j:=\frac{t}{1+jt}.
 \end{aligned}
\]
Each $t_j$ is irrational, since $1-jt_j=(1+jt)^{-1}\neq0$
and $t=t_j/(1-jt_j)$. Hence $t_jJ_4$ is nondegenerate.
Moreover,
\[
 t_j-t_l=\frac{(l-j)t^2}{(1+jt)(1+lt)}\neq0
 \qquad(j\neq l).
\]
Thus the parameters $t_jJ_4$ are pairwise distinct.
Proposition~\ref{prp:centralizer-families}, with $N=0$, $a=0$
and $b=j$, now gives
\[
 (A_{t_j}\otimes A_{t_j})\rtimes_{\af_{V_k}}\Z_k
 \sim_M
 (A_t\otimes A_t)\rtimes_{\af_{V_k}}\Z_k,
 \qquad k\in\{5,8,10,12\},\quad j\in\Z.
\]
Here the action on each tensor product is understood through the
identification $A_{t_j}\otimes A_{t_j}\cong A_{t_jJ_4}$.
For each fixed $k$, the matrix $V_k$ is unchanged throughout this
family of simple four-dimensional tori. The orders $5,8,10,12$
do not occur for finite cyclic subgroups of $\rm{SL}_2(\Z)$.
\end{exa}

\appendix
\section{Proofs for the symplectic construction and equivariant bimodule}\label{app:symplectic-computations}

We give the verifications used in Sections~\ref{sec:symplectic-construction} and~\ref{sec:equivariant-bimodule}, retaining their notation
and the embedding maps from Section~\ref{Prelim}.

\subsection{}\label{app:block-identities}

Write $g'=\begin{pmatrix}A'&B'\\C'&D'\end{pmatrix}$.
Left multiplication by $\nu(K)$ leaves the lower block row
unchanged. Comparing that row in
$g''\rho(W^t)=\rho(P_1)g''$ gives
\begin{equation}\label{C',D' relations}
C'W^t=(P_1^t)^{-1}C',\qquad
D'W^{-1}=(P_1^t)^{-1}D'.
\end{equation}

\begin{proof}[Proof of Lemma~\ref{W and A compatibility}]
Since $W^t\te W=\te$, we have $W^t\te=\te W^{-1}$.
Combining this with~\eqref{C',D' relations} gives
\[
\begin{aligned}
(P_1^t)^{-1}(C'\te+D')
&=C'W^t\te+D'W^{-1}=(C'\te+D')W^{-1}.
\end{aligned}
\]
Li's identity $\A^{-1}=C'\te+D'$ therefore implies
$(P_1^t)^{-1}\A^{-1}=\A^{-1}W^{-1}$. Taking inverses
yields $W\A=\A P_1^t$. Substituting $C'=\A^{-1}\Ph$
into the first equation of~\eqref{C',D' relations} then gives
\[
\A^{-1}\Ph W^t
=(P_1^t)^{-1}\A^{-1}\Ph
=\A^{-1}W^{-1}\Ph.
\]
Cancelling $\A^{-1}$ and multiplying by $W$ proves
$W\Ph W^t=\Ph$.
\end{proof}

\begin{proof}[Proof of Proposition~\ref{prop:W-block}]
Initially write $W=\begin{pmatrix}W_1&W_2\\W_3&W_4\end{pmatrix}$.
The identity $W\Ph W^t=\Ph$ reads
\[
\begin{pmatrix}
W_1F_0W_1^t&W_1F_0W_3^t\\
W_3F_0W_1^t&W_3F_0W_3^t
\end{pmatrix}
=\begin{pmatrix}F_0&0\\0&0\end{pmatrix}.
\]
The upper-left block and invertibility of $F_0$ imply
$(\det W_1)^2=1$. The lower-left block then forces
$W_3=0$. Since $W$ is unimodular, so is $W_4$.
Inverting $W_1F_0W_1^t=F_0$ and using
$F_0^{-1}=\te_{11}-Z$ gives
\begin{equation}\label{te-Z eq}
W_1^t(\te_{11}-Z)W_1=\te_{11}-Z.
\end{equation}
The $(1,1)$ block of $W^t\te W=\te$ is now
$W_1^t\te_{11}W_1=\te_{11}$. Subtracting~\eqref{te-Z eq}
gives $W_1^tZW_1=Z$.
\end{proof}

The remaining block identities needed for the construction follow
by multiplying out $W^t\te W=\te$:
\begin{equation}\label{eq:theta-block-identities}
\begin{aligned}
W_2^t\te_{11}W_1+W_4^t\te_{21}W_1&=\te_{21},\\
W_2^t\te_{11}W_2+W_2^t\te_{12}W_4
+W_4^t\te_{21}W_2+W_4^t\te_{22}W_4&=\te_{22}.
\end{aligned}
\end{equation}

\subsection{}\label{app:real-finite}

\begin{proof}[Proof of Lemma~\ref{lem:finite-quotient}]
Write $R_Za=(u,v,w)\in\Z^r\oplus\Z^r\oplus\Z^{2p-2r}$.
The definition of $E_r$ gives
\[
\phi_F(a)=\bigl([m_1u_1]_{n_1},\ldots,[m_ru_r]_{n_r},
[v_1]_{n_1},\ldots,[v_r]_{n_r}\bigr).
\]
Because $\gcd(m_j,n_j)=1$, this vanishes exactly when
$n_j\mid u_j$ and $n_j\mid v_j$ for every $j$.
On the other hand, the normal form~\eqref{eq:Z-normal} gives
\[
Za=R_Z^t
\begin{pmatrix}
\diag(m_j/n_j)v\\-\diag(m_j/n_j)u\\0
\end{pmatrix}.
\]
Unimodularity of $R_Z$ shows that $Za$ is integral exactly
under the same divisibility conditions. Thus
$\ker\phi_F=\Ld_Z$.

Multiplication by $m_j$ is a bijection of $\Z/n_j\Z$.
Hence all coordinates of $\phi_F(a)$ may be prescribed
independently by choosing $u,v$; unimodularity of $R_Z$
then gives a corresponding $a\in\Z^{2p}$. Thus $\phi_F$
is surjective, and the quotient description follows.
\end{proof}

For $R_Za=(u,v,w)$ and $R_Za'=(u',v',w')$, the finite
bicharacter satisfies
\[
\begin{aligned}
\rh_F(\phi_F(a),\phi_F(a'))
&=\exp\!\left(2\pi i\sum_{j=1}^r
\frac{m_j}{n_j}(u_jv_j'-v_ju_j')\right)=\exp(2\pi i a^tZa'),
\end{aligned}
\]
proving~\eqref{eq:rhoF-Z}. The real factor contributes
$\exp(2\pi i a^t(\te_{11}-Z)a')$ because
$T_{11}^tJ_0T_{11}=\te_{11}-Z$. Multiplying these
contributions proves~\eqref{eq:rho0-tau}; in particular,
\begin{equation}\label{eq:rho0-A-invariance}
\rh_0(\ta(W_1a),\ta(W_1a'))
=\rh_0(\ta(a),\ta(a')),
\qquad a,a'\in\Z^{2p},
\end{equation}
since $W_1^t\te_{11}W_1=\te_{11}$.

To verify that $Q_F$ is well defined, use
$ZW_1=(W_1^t)^{-1}Z$. For $a\in\Ld_Z$, both
$Za$ and $(W_1^t)^{-1}Za$ are integral, so
$W_1a\in\Ld_Z$. Applying the same argument to $W_1^{-1}$
gives
\begin{equation}\label{eq:A-LambdaZ}
W_1\Ld_Z=\Ld_Z.
\end{equation}
Thus $W_1$ induces an automorphism of $\Z^{2p}/\Ld_Z$,
which is exactly $Q_F$ under Lemma~\ref{lem:finite-quotient}.
It preserves the bicharacter because
\[
\begin{aligned}
\rh_F(Q_F\phi_F(a),Q_F\phi_F(a'))
&=\exp(2\pi i a^tW_1^tZW_1a')\\
&=\exp(2\pi i a^tZa')
=\rh_F(\phi_F(a),\phi_F(a')),
\end{aligned}
\]
and $\phi_F$ is surjective.

On the real factor, $Q_{\R}=T_{11}W_1T_{11}^{-1}$ satisfies
\[
\begin{aligned}
Q_{\R}^tJ_0Q_{\R}
&=T_{11}^{-t}W_1^t(T_{11}^tJ_0T_{11})W_1T_{11}^{-1}\\
&=T_{11}^{-t}(\te_{11}-Z)T_{11}^{-1}=J_0.
\end{aligned}
\]
Therefore $Q_0=Q_{\R}\oplus Q_F$ is a continuous
symplectic automorphism of $G_0$. Its definition gives
\[
Q_0\ta(a)
=\bigl(T_{11}W_1a,\phi_F(W_1a)\bigr)
=\ta(W_1a),
\]
as asserted in~\eqref{eq:Q0-tau}.

Finally, the character in~\eqref{eq:ell-def} determines a
homomorphism $\ell:G_0\to\T^q$. Its coordinates are
continuous, since they are obtained by evaluating that character
on the standard basis of $\Z^q$. For $a\in\Z^{2p}$ and
$z\in\Z^q$,
\[
\begin{aligned}
\exp(2\pi i z^t\ell(\ta(a)))
&=\rh_0(\ta(W_1a),\ta(W_2z))\\
&=\exp(2\pi i a^tW_1^t\te_{11}W_2z)\\
&=\exp(-2\pi i z^tW_2^t\te_{11}W_1a).
\end{aligned}
\]
Characters of $\Z^q$ are determined by their values, so
\begin{equation}\label{eq:ell-lattice}
\ell(\ta(a))\equiv-W_2^t\te_{11}W_1a\pmod{\Z^q}.
\end{equation}

\subsection{}\label{app:symplecticity}

\begin{proof}[Proof of Proposition~\ref{prp:Q symplectic}]
Each component of~\eqref{eq:Q-def} is a continuous
homomorphism. To find the inverse, write
$(u',b',t')=Q(u,b,t)$ and solve successively:
\[
b=W_4^{-1}b',\qquad
u=Q_0^{-1}(u'-c(b)),\qquad
t=W_4^tt'-\ell(u)-\Xi b.
\]
These formulas are well defined and continuous because $W_4$
is unimodular and $Q_0$ is a topological automorphism.

For $x=(u,b,t)$ and $y=(v,d,s)$, write the bicharacter as
\[
\rh_G(x,y)=\rh_0(u,v)
\exp\!\bigl(2\pi i(b^ts-d^tt)\bigr).
\]
Substitution of~\eqref{eq:Q-def} gives
\[
\begin{aligned}
\rh_G(Qx,Qy)
={}&\rh_0(Q_0u+c(b),Q_0v+c(d))\\
&\quad\times\exp\!\left(2\pi i\left[
b^t(s+\ell(v)+\Xi d)-d^t(t+\ell(u)+\Xi b)
\right]\right).
\end{aligned}
\]
In the first factor, symplecticity of $Q_0$ gives
$\rh_0(Q_0u,Q_0v)=\rh_0(u,v)$, while~\eqref{eq:ell-def}
gives the mixed factors
\[
\rh_0(Q_0u,c(d))=\exp(2\pi i d^t\ell(u)),\qquad
\rh_0(c(b),Q_0v)=\exp(-2\pi i b^t\ell(v)).
\]
They cancel the corresponding factors from the torus coordinate.
Hence
\begin{equation}\label{eq:Q-rho-reduction}
\frac{\rh_G(Qx,Qy)}{\rh_G(x,y)}
=\rh_0(c(b),c(d))
\exp\!\bigl(2\pi i b^t(\Xi-\Xi^t)d\bigr).
\end{equation}
By~\eqref{eq:rho0-tau},
\[
\rh_0(c(b),c(d))
=\exp(2\pi i b^tW_2^t\te_{11}W_2d).
\]
Using $T_{32}-T_{32}^t=\te_{22}$ and
$\te_{21}^t=-\te_{12}$ in the definition of $\Xi$ yields
\[
\begin{aligned}
\Xi-\Xi^t
&=W_4^t\te_{21}W_2+W_2^t\te_{12}W_4
+W_4^t\te_{22}W_4-\te_{22}=-W_2^t\te_{11}W_2,
\end{aligned}
\]
where the last equality is~\eqref{eq:theta-block-identities}.
The two factors in~\eqref{eq:Q-rho-reduction} therefore
cancel, proving the claim.
\end{proof}

\subsection{}\label{app:embedding}

\begin{proof}[Proof of Proposition~\ref{prop:QT}]
For $(a,b)\in\Z^{2p}\oplus\Z^q$, equations
\eqref{eq:T-tau} and~\eqref{eq:Q-def} give
\[
\begin{aligned}
QT(a,b)=\bigl(&Q_0\ta(a)+c(b),\,W_4b, (W_4^t)^{-1}[\te_{21}a+T_{32}b+\ell(\ta(a))+\Xi b]\bigr).
\end{aligned}
\]
The first coordinate equals $\ta(W_1a+W_2b)$ by
\eqref{eq:Q0-tau} and the definition of $c$.
For the last coordinate, use~\eqref{eq:ell-lattice} and
\eqref{eq:theta-block-identities} to obtain
\[
\begin{aligned}
\te_{21}a+T_{32}b-W_2^t\te_{11}W_1a+\Xi b
&=(\te_{21}-W_2^t\te_{11}W_1)a+(T_{32}+\Xi)b\\
&=W_4^t\te_{21}(W_1a+W_2b)+W_4^tT_{32}W_4b.
\end{aligned}
\]
Since $(W_4^t)^{-1}$ is integral, this equality descends to
$\T^q$ and gives
\[
QT(a,b)=\bigl(\ta(W_1a+W_2b),\,W_4b,
[\te_{21}(W_1a+W_2b)+T_{32}W_4b]\bigr).
\]
The right-hand side is $T(W(a,b))$.
\end{proof}

\subsection{}\label{app:composition}

For $V=\begin{pmatrix}A_V&B_V\\0&H_V\end{pmatrix}
\in\mathcal G_{\te,Z}$, denote the automorphism on $G_0$
by $Q_{0,A_V}$, the character homomorphism by $\ell_V$, and
the matrix $\Xi$ obtained from $V$ by $\Xi_V$. These are obtained from
the construction in Section~3 by replacing
$(W_1,W_2,W_4)$ with $(A_V,B_V,H_V)$.

\begin{proof}[Proof of Lemma~\ref{lem:Q-functorial}]
Write $V_i=\begin{pmatrix}A_i&B_i\\0&H_i\end{pmatrix}$
for $i=1,2$, and put $V_{12}=V_1V_2$. Its blocks are
\[
A_{12}=A_1A_2,\qquad
B_{12}=A_1B_2+B_1H_2,\qquad
H_{12}=H_1H_2.
\]
For $i=1,2,12$, abbreviate
$Q_{0,i}=Q_{0,A_i}$, $c_i(b)=\ta(B_ib)$,
$\ell_i=\ell_{V_i}$ and $\Xi_i=\Xi_{V_i}$.
The construction gives
\begin{equation}\label{eq:QV12}
\begin{aligned}
Q_{V_{12}}(u,b,t)=\bigl(&Q_{0,12}u+c_{12}(b),\,H_{12}b, (H_{12}^t)^{-1}(t+\ell_{12}(u)+\Xi_{12}b)\bigr).
\end{aligned}
\end{equation}
On the real factor,
\[
(T_{11}A_1T_{11}^{-1})(T_{11}A_2T_{11}^{-1})
=T_{11}A_1A_2T_{11}^{-1},
\]
and on the finite factor,
$Q_{F,A_1}Q_{F,A_2}\phi_F(a)=\phi_F(A_1A_2a)$.
Surjectivity of $\phi_F$ thus proves
\begin{equation}\label{eq:Q0-composition}
Q_{0,1}Q_{0,2}=Q_{0,12}.
\end{equation}
Also,
\begin{equation}\label{eq:c-composition}
\begin{aligned}
Q_{0,1}c_2(b)+c_1(H_2b)
&=\ta(A_1B_2b)+\ta(B_1H_2b)=\ta(B_{12}b)=c_{12}(b).
\end{aligned}
\end{equation}

For $z\in\Z^q$, symplecticity of $Q_{0,1}$ and
\eqref{eq:ell-def} give
\[
\begin{aligned}
\rh_0(Q_{0,12}u,c_{12}(z))
&=\rh_0(Q_{0,1}Q_{0,2}u,Q_{0,1}c_2(z)+c_1(H_2z))\\
&=\rh_0(Q_{0,2}u,c_2(z))
\rh_0(Q_{0,1}Q_{0,2}u,c_1(H_2z))\\
&=\exp\!\left(2\pi i z^t
[\ell_2(u)+H_2^t\ell_1(Q_{0,2}u)]\right).
\end{aligned}
\]
Uniqueness of the character representative in $\T^q$ yields
\begin{equation}\label{eq:ell-composition}
\ell_{12}(u)=\ell_2(u)+H_2^t\ell_1(Q_{0,2}u)
\qquad\text{in }\T^q.
\end{equation}

It remains to account for the term $\ell_1(c_2(b))$ in
the composite. From
$\Xi_i=H_i^t\te_{21}B_i+H_i^tT_{32}H_i-T_{32}$,
substitution of the blocks of $V_{12}$ gives
\[
\Xi_{12}-\Xi_2-H_2^t\Xi_1H_2
=H_2^t(H_1^t\te_{21}A_1-\te_{21})B_2.
\]
The $(2,1)$ block of $V_1^t\te V_1=\te$ is
$B_1^t\te_{11}A_1+H_1^t\te_{21}A_1=\te_{21}$, so
\begin{equation}\label{eq:Xi-composition}
\Xi_{12}=\Xi_2+H_2^t\Xi_1H_2
-H_2^tB_1^t\te_{11}A_1B_2.
\end{equation}
The lattice identity~\eqref{eq:ell-lattice}, applied to
$V_1$ and $c_2(b)=\ta(B_2b)$, reads
\[
\ell_1(c_2(b))\equiv
-B_1^t\te_{11}A_1B_2b\pmod{\Z^q}.
\]
Consequently,
\begin{equation}\label{eq:Xi-ell-composition}
\Xi_2b+H_2^t\ell_1(c_2(b))+H_2^t\Xi_1H_2b
\equiv\Xi_{12}b\pmod{\Z^q}.
\end{equation}

By~\eqref{eq:Q0-composition} and~\eqref{eq:c-composition},
the first two coordinates of $Q_{V_1}Q_{V_2}(u,b,t)$ are
$Q_{0,12}u+c_{12}(b)$ and $H_{12}b$. Its third coordinate is
\[
\begin{aligned}
&(H_1^t)^{-1}\bigl[(H_2^t)^{-1}(t+\ell_2(u)+\Xi_2b)
+\ell_1(Q_{0,2}u+c_2(b))+\Xi_1H_2b\bigr]\\
&\quad=(H_{12}^t)^{-1}\bigl[t+\ell_2(u)+\Xi_2b
+H_2^t\ell_1(Q_{0,2}u)+H_2^t\ell_1(c_2(b))
+H_2^t\Xi_1H_2b\bigr]\\
&\quad=(H_{12}^t)^{-1}(t+\ell_{12}(u)+\Xi_{12}b),
\end{aligned}
\]
where the last equality holds in $\T^q$ by
\eqref{eq:ell-composition} and~\eqref{eq:Xi-ell-composition}.
Comparison with~\eqref{eq:QV12} proves
$Q_{V_1}Q_{V_2}=Q_{V_1V_2}$.
\end{proof}

\subsection{}\label{app:right-exact}

\begin{proof}[Proof of Proposition~\ref{prop:right-exact}]
The commutation relation for Heisenberg--Weyl operators gives
\[
 \pi(z_0)\pi(z)\pi(z_0)^*
 =\overline{\rh_G(z_0,z)}\pi(z).
\]
Since $R_{Wx}$ is a scalar multiple of $\pi(TWx)$, it follows that
\[
 \begin{aligned}
 \mathcal U R_x\mathcal U^*
 &=\ch_R(x)\pi(z_0)R_{Wx}\pi(z_0)^*\\
 &=\ch_R(x)\overline{\rh_G(z_0,TWx)}R_{Wx}
 =R_{Wx}.
 \end{aligned}
\]
Since $R_x$ is right multiplication by $U_x^\te$, this gives
\[
 \mathcal U(\xi\cdot U_x^\te)
 =(\mathcal U\xi)\cdot U_{Wx}^\te
 =(\mathcal U\xi)\cdot\af_W(U_x^\te).
\]
The identity holds for finite Fourier sums by linearity. For
$a\in A_\te^\infty$, its rapidly decreasing coefficients give an
absolutely convergent operator series on $L^2(M)$, so the same
identity holds for $a$.
\end{proof}

\subsection{}\label{app:Uk-obstruction}

\begin{proof}[Proof of Lemma~\ref{lem:Uk-obstruction}]
The function $\ch_k$ is continuous by its definition. We claim
that it is a character. Conjugate the relation
$\pi(z)\pi(w)=\overline{\bt(z,w)}\pi(z+w)$ by $\mathcal U^k$.
Using~\eqref{eq:Uk-Weyl-covariance} on both sides gives
\[
 \ch_k(z)\ch_k(w)\overline{\bt(z,w)}\pi(z+w)
 =\overline{\bt(z,w)}\ch_k(z+w)\pi(z+w).
\]
Cancelling gives $\ch_k(z+w)=\ch_k(z)\ch_k(w)$. By
Lemma~\ref{lem:rhoG-self-duality}, there is a unique $v\in G$
such that
\begin{equation}\label{eq:chi-k-v}
 \ch_k(z)=\overline{\rh_G(v,z)},\qquad z\in G.
\end{equation}
Equations~\eqref{eq:Uk-Weyl-covariance}
and~\eqref{eq:chi-k-v} show that $\mathcal U^k$ and $\pi(v)$
induce the same conjugation on every $\pi(z)$. Hence
$\pi(v)^*\mathcal U^k$ commutes with the Heisenberg--Weyl
representation. By irreducibility and Schur's lemma, there is
$\ld\in\T$ such that
\[
 \mathcal U^k=\ld\pi(v).
\]

Since $\mathcal U^k$ commutes with $R_x$, and $R_x$ is a nonzero
scalar multiple of $\pi(Tx)$, we have
\[
 \pi(v)\pi(Tx)\pi(v)^*=\pi(Tx),\qquad x\in\Z^n.
\]
The commutation relation therefore gives $\rho_G(v,Tx)=1$ for
every $x$, so $v\in D^\perp$.

Finally, $\mathcal U$ commutes with $\mathcal U^k=\ld\pi(v)$.
Its covariance relation then gives
\[
 \pi(v)=\mathcal U\pi(v)\mathcal U^*
 =\eta_{\mathcal U}(v)\pi(Qv).
\]
Thus $\pi(Qv)\pi(v)^*$ is scalar. It is a scalar multiple of
$\pi(Qv-v)$, so Lemma~\ref{lem:scalar-operator} implies $Qv=v$.
\end{proof}

\subsection{}\label{app:modified-U}

\begin{proof}[Proof of Proposition~\ref{prp:modified U}]
By Lemma~\ref{lem:Uk-obstruction}, write
$\mathcal U^k=\ld\pi(v)$ with $v\in D^\perp$ and $Qv=v$.
There is a unique $m\in\Z^n$ with $v=S(m)$. Equation~\eqref{eq: QS}
gives
\[
 S(P_1^tm)=QS(m)=Qv=v=S(m).
\]
Since $S$ is injective, $P_1^tm=m$. Now
Lemma~\ref{W and A compatibility} gives
\[
 P_1^t=\A^{-1}W\A.
\]
Thus $P_1^t$ has no eigenvalue $1$, and $m=0$. It follows that
$v=0$ and $\mathcal U^k=\ld I$.

Choose $\tilde\ld\in\T$ with $\tilde\ld^k=\ld^{-1}$ and put
$\mathcal U_0=\tilde\ld\mathcal U$. Then $\mathcal U_0^k=I$.
The right covariance relation is unchanged by this scalar
multiple, and $\mathcal U_0$ still preserves $\mathcal S(M)$.
\end{proof}

\subsection{}\label{app:delta-character}

\begin{proof}[Proof of Lemma~\ref{lem:delta-character}]
Conjugating
$\widetilde L_x\widetilde L_y=\sm_\Th(x,y)\widetilde L_{x+y}$
by $\mathcal U_0$, we obtain
\[
 \dt(x)\dt(y)\sm_\Th(P_1^tx,P_1^ty)
 \widetilde L_{P_1^t(x+y)}
 =\sm_\Th(x,y)\dt(x+y)\widetilde L_{P_1^t(x+y)}.
\]
Since $P_1\Th P_1^t=\Th$, the two multipliers agree. Cancelling
them gives $\dt(x+y)=\dt(x)\dt(y)$.
\end{proof}

\subsection{}\label{app:character-solution}

\begin{proof}[Proof of Lemma~\ref{lem:gauge-solution}]
Write $\dt(x)=\exp(2\pi i r^tx)$, with
$r\in\R^n/\Z^n$, and seek $\om$ in the form
$\om(x)=\exp(2\pi i s^tx)$. Then
\[
 \frac{\om(P_1^tx)}{\om(x)}
 =\exp\!\bigl(2\pi i((P_1-I_n)s)^tx\bigr).
\]
Hence~\eqref{eq:gauge-character-equation} is equivalent to
\begin{equation}\label{eq:gauge-linear}
 (P_1-I_n)s\equiv-r\pmod{\Z^n}.
\end{equation}
The matrix $P_1$ has no eigenvalue $1$, since $P_1^t$ is similar
to $W$. Choose a representative $r\in\R^n$ and set
\[
 s=-(P_1-I_n)^{-1}r.
\]
Then $(P_1-I_n)s=-r$ in $\R^n$, so the character
$\om(x)=\exp(2\pi i s^tx)$ has the required property.
\end{proof}

\subsection{}\label{app:left-exact}

\begin{proof}[Proof of Proposition~\ref{prop:left-exact}]
The choice of $\om$ gives
\[
 \mathcal U_0L_x^{(\om)}\mathcal U_0^*
 =L_{P_1^tx}^{(\om)},\qquad x\in\Z^n.
\]
This is~\eqref{eq:left-exact} for $b=U_x^\Th$. Linearity and
convergence of the Fourier series extend the identity to
$A_\Th^\infty$.
\end{proof}

\subsection{}\label{app:inner-product-covariance}

\begin{proof}[Proof of Proposition~\ref{prop:inner-product-covariance}]
For the right inner product, take $y\in\Z^n$. Unitarity moves
$\mathcal U_0$ from one entry of the $L^2$ pairing to the other. The right
covariance relation then changes the lattice index from $y$ to
$W^{-1}y$. More explicitly,
\[
 \begin{aligned}
 \langle\mathcal U_0f,\mathcal U_0h\rangle_{A_\te}(y)
 &=(\mathcal U_0f,R_{-y}\mathcal U_0h)_2\\
 &=(f,\mathcal U_0^*R_{-y}\mathcal U_0h)_2\\
 &=(f,R_{-W^{-1}y}h)_2
 =\langle f,h\rangle_{A_\te}(W^{-1}y).
 \end{aligned}
\]
Reindexing the Fourier series by $y=Wx$ gives
\[
 \begin{aligned}
 \langle\mathcal U_0f,\mathcal U_0h\rangle_{A_\te}
 &=\sum_{x\in\Z^n}\langle f,h\rangle_{A_\te}(x)U_{Wx}^\te\\
 &=\af_W\bigl(\langle f,h\rangle_{A_\te}\bigr).
 \end{aligned}
\]
This proves~\eqref{eq:right-ip-covariance}.

The same argument applies on the left, using
\eqref{eq:left-ip-coefficient} and the covariance of
$L_y^{(\om)}$:
\[
 \begin{aligned}
 {}_{A_\Th}\langle\mathcal U_0f,\mathcal U_0h\rangle(y)
 &=\kappa(L_y^{(\om)}\mathcal U_0h,\mathcal U_0f)_2\\
 &=\kappa(\mathcal U_0^*L_y^{(\om)}\mathcal U_0h,f)_2\\
 &=\kappa(L_{(P_1^t)^{-1}y}^{(\om)}h,f)_2\\
 &={}_{A_\Th}\langle f,h\rangle((P_1^t)^{-1}y).
 \end{aligned}
\]
Reindexing by $y=P_1^tx$ now gives
\[
 \begin{aligned}
 {}_{A_\Th}\langle\mathcal U_0f,\mathcal U_0h\rangle
 &=\sum_{x\in\Z^n}{}_{A_\Th}\langle f,h\rangle(x)U_{P_1^tx}^\Th=\af_{P_1^t}\bigl({}_{A_\Th}\langle f,h\rangle\bigr),
 \end{aligned}
\]
proving~\eqref{eq:left-ip-covariance}.

Taking $h=f$ in the right covariance identity, we get
\[
 \|\mathcal U_0f\|_E^2
 =\bigl\|\af_W(\langle f,f\rangle_{A_\te})\bigr\|
 =\|\langle f,f\rangle_{A_\te}\|
 =\|f\|_E^2.
\]
Thus $\mathcal U_0$ extends to an isometry of $E$. The identity
$\mathcal U_0^k=I$ also extends from the dense subspace
$\mathcal S(M)$, so this isometry is surjective, with inverse
$\mathcal U_0^{k-1}$. Continuity of the module actions and inner
products then extends all the covariance identities to $E$.
\end{proof}

\end{document}